\subjclass[2010]{14M15, 05E45, 13F55}
\font\co=lcircle10
\def\boxcross{\ \smash{\lower6.5pt\hbox{\rlap{\hskip4.5pt\vrule height13.5pt}}
                \raise0pt\hbox{\rlap{\hskip-2pt \vrule height.4pt depth0pt
                        width13.5pt}}}\hskip12.7pt}
\def\boxelbow{\ \hskip.1pt\smash{%
               \hbox{\co \hskip 5.5pt\rlap{\mathsurround=0pt\rlap{\mathsurround=0pt\char'006}\lower0.4pt\rlap{\char'004}}
                \lower6.5pt\rlap{\hskip-0.2pt\vrule height3pt}
                \raise3.5pt\rlap{\hskip-0.2pt\vrule height3.2pt}}
                \hbox{%
                  \rlap{\hskip-6.4pt \vrule height.4pt depth0pt
width2.5pt}%
                  \rlap{\hskip4.05pt \vrule height.4pt depth0pt
width3.1pt}}}
                \hskip8.7pt}
   \renewcommand*{\backrefalt}[4]{%
     \ifcase #1 %
     \or
     \vspace{0pt} \footnotesize (In \S#2)%
     \else
     \vspace{0pt} \footnotesize (In \S#2)
     \fi}
\renewcommand*{\backref}[1]{}
\newtheorem{theorem}{Theorem}[section]
\newtheorem{thm}[theorem]{Theorem}
\newtheorem*{thm*}{Theorem}
\newtheorem{lemma}[theorem]{Lemma}
\newtheorem{lem}[theorem]{Lemma}
\newtheorem{prop}[theorem]{Proposition}
\newtheorem{cor}[theorem]{Corollary}
\theoremstyle{definition}
\theoremstyle{remark}
\newtheorem{Example}[theorem]{Example}
\newtheorem{remark}[theorem]{Remark}
\newcommand{\Zo}{\mathring{Z}}
\newcommand{\Yo}{\mathring{Y}}
\newcommand{\Xo}{\mathring{X}}
\newcommand{\Pio}{\mathring{\Pi}}
\newcommand{\SR}{\mathrm{SR}}
\newcommand{\In}{\mathrm{In}}
\def\a{{\mathbf{a}}}
\def\b{{\mathbf{b}}}
\newcommand{\iso}{\cong}
\newcommand{\I}{{\mathcal I}}
\newcommand{\Z}{\mathbb Z}
\newcommand{\onto}{\twoheadrightarrow}
\newcommand{\into}{\hookrightarrow}
\newcommand{\PP}{\mathbb{P}}
\newcommand{\Sp}{\mathrm{Sp}}
\newcommand\defn[1]{{\bf #1}}
\newcommand\newword[1]{{\bf #1}}
\DeclareMathOperator{\Spec}{Spec}
\DeclareMathOperator{\codim}{codim}
\DeclareMathOperator{\lex}{lex}
\newcommand{\Fl}{F \ell}
\renewcommand\O{{\mathcal O}}
\newcommand\junk[1]{}
\begin{document}

\title{Projections of Richardson Varieties}
\author{Allen Knutson}
\address{Department of Mathematics, Cornell University, Ithaca, NY 14853 USA}
\email{allenk@math.cornell.edu}
\thanks{AK was partially supported by NSF grant DMS-0604708.}
\author{Thomas Lam}
\address{Department of Mathematics, University of Michigan, Ann Arbor, MI
  48109 USA}
\email{tfylam@umich.edu}
 \thanks{TL was supported by NSF grants DMS-0652641 and DMS-0901111, and by a Sloan Fellowship.}
\author{David E Speyer}
\address{Department of Mathematics, University of Michigan, Ann Arbor, MI
  48109 USA}
\email{speyer@umich.edu}
\thanks{DES was supported by a Research Fellowship from the Clay Mathematics Institute}
\date{\today}

\begin{abstract}
  While the projections of Schubert varieties in a full generalized
  flag manifold $G/B$ to a partial flag manifold $G/P$ are again
  Schubert varieties, the projections of Richardson varieties
  (intersections of Schubert varieties with opposite Schubert varieties)
  are not always Richardson varieties. The stratification of $G/P$
  by projections of Richardson varieties arises in the theory of
  total positivity and also from Poisson and noncommutative geometry.

  In this paper we show that many of the geometric
  properties of Richardson varieties hold more generally for projected
  Richardson varieties; they are normal, Cohen-Macaulay, have
  rational resolutions, and are compatibly Frobenius split with
  respect to the standard splitting. Indeed, we show that the
  projected Richardson varieties are the {\em only} compatibly
  split subvarieties, providing an example of the recent theorem
  [Schwede, Kumar-Mehta] that a Frobenius split scheme has only
  finitely many compatibly split subvarieties. (The $G/B$ case was
  treated by [Hague], whose proof we simplify somewhat.)
  
  One combinatorial analogue of a Richardson variety is the order
  complex of the corresponding Bruhat interval in $W$;
  this complex is known to be an EL-shellable ball [Bj\"orner-Wachs '82]. 
  We prove that the projection of such a complex into the order complex 
  of the Bruhat order on $W/W_P$ is again a shellable ball.
  This requires extensive analysis of ``$P$-Bruhat order'',
  a generalization of the $k$-Bruhat order of [Bergeron-Sottile '98].
  In the case that $G/P$ is minuscule (e.g. a Grassmannian), 
  we show that its Gr\"obner degeneration takes each projected Richardson 
  variety to the Stanley-Reisner scheme of its corresponding ball.

\junk{
  We mention a number of special features of the Grassmannian case,
  which will be treated in a separate paper.}
\end{abstract}

\maketitle

\setcounter{tocdepth}{1}
{\footnotesize \tableofcontents}

\section{Introduction, and statement of results}\label{sec:intro}

Fix a reductive algebraic group $G$ over an algebraically closed field of arbitrary characteristic, with upper and lower Borel subgroups $B = B^+$ and $B^-$.
Fix a parabolic subgroup $P \supseteq B^+$, with $W$ and $W_P$ the corresponding Coxeter groups.  For $u$ in $W/W_P$, the \newword{Schubert cell} $\Yo_{u}$ is $B^- u P/P$ and 
$Y_{u}$ is the closure of $\Yo_{u}$. 
We also define the opposite Schubert cells:
given $w \in W/W_P$, the \newword{opposite Schubert cell} $\Yo^w$ is 
$B^+ w P/P$ and $Y^w$ is the closure of $\Yo^w$.
Both are affine spaces, of codimension and dimension $\ell(w^P)$ respectively,
where $\ell:W\to \Z$ is the length function on $W$ and $w^P$ is the
shortest representative of $w$ in $W/W_P$. 

\junk{
The cell $\Yo^w$ is an affine space of dimension $\ell(w^P)$ and
$Y^w = \bigcup_{v \leq w} \Yo^w$.
The Schubert cell $\Yo_{u}$ is an affine space of dimension $\ell((w_0)^P) - \ell(u^P)$ and
$Y_{u} = \bigcup_{v \geq u} \Yo_{v}$.
}

The \newword{open Richardson variety} $\Yo_u^w$ is $\Yo_u \cap \Yo^w$. 
This is nonempty if and only if $u^P \leq w^P$, 
and it has dimension $\ell(w^P) - \ell(u^P)$.
It depends only on the classes of $u$ and $w$ in $W/W_P$.
The \newword{Richardson variety} $Y_u^w$ is the closure of $\Yo_u^w$, 
and we have $Y_u^w = \coprod_{u \leq x \leq y \leq w} \Yo_x^y$. 

When $P=B^+$, so that $W_P = \{ e \}$, we write $\Xo$ and $X$ instead
of $\Yo$ and $Y$.

We write $\pi$ for the projection $G/B \to G/P$, and also for the map
$W \to W/W_P$.  When necessary, we will write $\pi_P$ to indicate
the parabolic $P$.

While $\pi(Y_w)$ and $\pi(Y^w)$ are again Schubert and opposite Schubert
varieties, $\pi(Y_u^w)$ may not again be a Richardson variety.  These
``projected Richardson varieties'' $\pi(Y_u^w)$ were previously studied in \cite{Lus,Rie,GY}.
In this paper we show that all the standard (and some less well-known)
properties of Richardson varieties hold for the $\pi(Y_u^w)$:

\begin{thm*}\
  \begin{enumerate}
  \item Projected Richardson varieties are normal and Cohen-Macaulay, 
    and have rational resolutions (this definition is recalled in
    \S \ref{sec:frobsplit}).
  \item Under the standard Frobenius splitting on $G/P$, the 
    projected Richardson varieties are exactly 
    the compatibly split subvarieties. (In the $P=B$ case this
    was shown recently in \cite{Hague}.)
  \item The projection to $W/W_P$ of the order complex of a Bruhat
    interval in $W$ is a shellable ball.
  \item If $G/P$ is minuscule (definition recalled below),
    then under the standard Gr\"obner degeneration of $G/P$ to a
    Stanley-Reisner scheme, each projected Richardson variety 
    degenerates to the Stanley-Reisner scheme of its corresponding ball.
  \end{enumerate}
\end{thm*}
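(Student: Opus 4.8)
The plan is to handle the four parts largely independently, after isolating one ingredient used by all of them: for each projected Richardson variety there is a preferred pair of representatives $(x,y)\in W\times W$, singled out by the \emph{$P$-Bruhat order} developed in part (3), for which $\pi$ is especially well behaved on the Richardson variety $X_x^y\subseteq G/B$. Granting this, for (1) I would realize $\pi(X_u^w)=\pi(X_x^y)$ as the target of a \emph{rational morphism} from an honest Richardson variety and transport good properties. The claim is that for the preferred pair $\pi$ is birational from $X_x^y$ onto its image and $\pi_*\O_{X_x^y}=\O_{\pi(X_x^y)}$, $R^i\pi_*\O_{X_x^y}=0$ for $i>0$, with the analogous statement for dualizing sheaves. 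The point is that each fibre of $\pi|_{X_x^y}$ over a point of the image is the intersection of $X_x^y$ with a $P/B$-fibre, hence a (possibly empty) Richardson variety in a flag variety --- irreducible, with no higher structure-sheaf cohomology --- which gives the vanishing, while the preferred choice of $(x,y)$ is exactly what forces the generic such fibre to be a point. Composing with a rational resolution $\widetilde{X_x^y}\to X_x^y$ (ordinary Richardson varieties have rational resolutions, via Bott--Samelson constructions) yields a rational resolution of $\pi(X_u^w)$, and normality and Cohen--Macaulayness follow formally, in all characteristics.

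For (2), two directions. First, each $\pi(X_u^w)$ is compatibly split: the standard splitting $\phi$ of $G/B$ compatibly splits $\partial(G/B)$, hence every Richardson variety; and since $\pi\circ F_{G/B}=F_{G/P}\circ\pi$ and $\pi_*\O_{G/B}=\O_{G/P}$, the pushforward $\pi_*\phi$ is the standard splitting of $G/P$, and because $\pi_*\I_{X_u^w}$ is the radical ideal sheaf of $\pi(X_u^w)$ it compatibly splits $\pi(X_u^w)$. Conversely, if $V\subseteq G/P$ is compatibly split by the standard splitting, then $\pi^{-1}(V)$ is compatibly split by the standard splitting of $G/B$ (compatible splittings pull back along the locally trivial $\pi$). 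By the $P=B$ case, Hague's theorem --- which I would reprove by first identifying the maximal proper compatibly split subvarieties of $G/B$ as the codimension-one Schubert and opposite Schubert varieties, then inducting through intersections --- every irreducible component $Z$ of $\pi^{-1}(V)$ is a Richardson variety. As $\pi^{-1}(V)$ is a union of $\pi$-fibres and $\pi$ has irreducible fibres, each $Z=\pi^{-1}(\pi(Z))$, so $V=\bigcup\pi(Z)$ is a union of projected Richardson varieties; an irreducible $V$ is a single $\pi(Z)$.

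For (3) --- the combinatorial core, and the step I expect to be the main obstacle --- I would first define $P$-Bruhat order on $W$ by keeping only the cover relations $u\lessdot us_\alpha$ with $s_\alpha\notin W_P$ and taking the transitive closure, generalizing the $k$-Bruhat order of Bergeron--Sottile. The program is: (i) show this order governs the stratification $\pi(X_u^w)=\coprod\pi(\Xo_x^y)$, with distinct nonempty strata indexed by a $P$-Bruhat interval, and that the simplicial image $\pi_*\Delta([u,w])$ is exactly the order complex of that interval --- i.e.\ every chain in the image lifts, via an exchange argument internal to $P$-Bruhat order; (ii) construct an EL-labeling of $P$-Bruhat order descending from the Bj\"orner--Wachs labeling of ordinary Bruhat order, hence shellability of all $P$-Bruhat intervals; and (iii) deduce the ``ball, not sphere'' alternative by checking through the shelling that the proper part of such an interval has no decreasing maximal chain (so top reduced homology vanishes) while the pseudomanifold and boundary-shellability conditions hold. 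The bulk of the work is (ii)--(iii): verifying that the lexicographic labeling descends and keeps the crucial uniqueness property on every subinterval, which is where the extensive $P$-Bruhat analysis lives.

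For (4), with $G/P$ minuscule I would use the standard-monomial flat family degenerating $G/P\subseteq\PP(V(\varpi_P))$ to the Stanley--Reisner scheme of the order complex of the minuscule poset $W/W_P$; this is a Gr\"obner degeneration, and one takes the term order compatible with Frobenius splitting, so that the special fibre carries the canonical splitting of the Stanley--Reisner scheme and compatibly split subschemes specialize to compatibly split subschemes. By (2), $\pi(X_u^w)$ is compatibly split, so its flat limit is a compatibly split subscheme of the Stanley--Reisner scheme, hence the Stanley--Reisner scheme of some subcomplex $\Gamma$. To identify $\Gamma$ with the ball $\pi_*\Delta([u,w])$ of part (3): flatness preserves the Hilbert function, so $\Gamma$ is pure of dimension $\dim\pi(X_u^w)$ with the correct $f$-vector, and a direct count identifies the standard monomials restricting nontrivially to $\pi(X_u^w)$ with the multichains supported on faces of $\pi_*\Delta([u,w])$. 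The subtle points are this final combinatorial matching and the fact, needed along the way, that the compatibly split subschemes of a Stanley--Reisner scheme are exactly its subcomplexes.
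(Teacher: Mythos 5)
Two of your steps fail outright. In part (2), your converse direction rests on the claim that compatible splitting pulls back: if $V\subseteq G/P$ is compatibly split for $\pi_*\phi$, then $\pi^{-1}(V)$ is compatibly split for $\phi$ on $G/B$. There is no such general fact, and here it is demonstrably false: if it held, then by Hague's theorem every irreducible component of $\pi^{-1}(V)$ would be a Richardson variety of $G/B$ that is also a union of $\pi$-fibres, hence of the form $X_x^y$ with $x\in W^P$ and $y\in W^P_{\max}$, so $V$ would be a union of honest Richardson varieties $Y_{\pi(x)}^{\pi(y)}$ of $G/P$. That contradicts the other half of the theorem (which you also assert), since projected Richardson varieties that are not Richardson in $G/P$ are compatibly split. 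The paper's converse instead goes through anticanonical divisors: by Kumar--Mehta the section giving the splitting vanishes to order $p-1$ on every compatibly split divisor, and the union of the projected Richardson hypersurfaces is already anticanonical (Lemma \ref{L:Hyp1}, which itself needs $\pi_*\omega_{X_u^w}=\omega_{\Pi_u^w}$ from part (1)), so nothing else in the open stratum can be split; one then inducts down the stratification, which is exactly why normality and smoothness of the open strata are needed first. Part (3) has a parallel problem: $\pi(\Delta([u,w]))$ is \emph{not} the order complex of the $P$-Bruhat interval; it is that complex with identifications among lower-dimensional faces, and without the identifications the order complex is in general not shellable (Bergeron--Sottile, as cited in \S\ref{sec:shelling}). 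So your step (i) is false, and the EL-labeling program of (ii) cannot succeed, since EL-shellability of $P$-Bruhat intervals would imply shellability of their order complexes. The paper instead shows that facets of the image biject with maximal $P$-Bruhat chains (via unique lifts, resting on Proposition \ref{P:uniquelift} and Dyer's reflection orders in which reflections of $W_P$ come last), orders facets lexicographically by reflection labels, verifies the shelling condition by hand, and proves thinness separately to get a ball.

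Part (1) also has a real gap at the cohomological step: that the fibres of $\pi|_{X_x^y}$ are Richardson varieties in $P/B$ is not proved (the paper only obtains connectedness of fibres, and as a \emph{consequence} of cohomological triviality, not an input to it), and even granting it, fibrewise vanishing of $H^{>0}(\O)$ does not yield $R^{>0}\pi_*\O_{X_x^y}=0$: the map is not flat (fibre dimensions jump), so one needs control of infinitesimal thickenings of fibres or some other mechanism. The paper derives cohomological triviality from Frobenius splitting: Borel--Weil, the Brion--Lakshmibai surjectivity of $H^i(G/B;\pi^*L)\to H^i(X_u^w;\pi^*L)$, vanishing of ample cohomology on split varieties, and a Leray argument with large ample twists; the dualizing-sheaf statements, which you assert without argument, come from \cite[Theorem 1.3.14]{BK} applied to a resolution whose exceptional locus lies in the boundary divisor; and the characteristic-$p$ rational resolution of Richardson varieties themselves is not a citation but the content of the appendix. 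Finally, in part (4) your key unproved steps are that the splitting degenerates along the Gr\"obner degeneration and that the compatibly split subschemes of a Stanley--Reisner scheme are exactly its subcomplexes; the paper avoids both by an induction that slices with the revlex-last Pl\"ucker coordinate $p_{\pi(u)}$ (Lemmas \ref{L:GeomMonk} and \ref{L:revlex}) and compares Hilbert series via the Lakshmibai--Littelmann standard monomial count, which is in any case where your ``direct count'' would have to come from.
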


We could not find a reference for the statement that usual Richardson varieties have rational 
resolutions in all characteristics, and this is established by a separate argument in Appendix \ref{sec:appendix}.

Recall that a \defn{minuscule} $G/P$ is the closed $G$-orbit in the
projectivization of a minuscule irrep $V_\omega$, meaning one
whose only weights are the extremal weights $W\cdot \lambda$.
This forces $P$ to be maximal, and in the $G=GL_n$ case,
all such $G/P$ (the Grassmannians) are minuscule, where
the $V_\omega$ are the Pl\"ucker embedding spaces.

We believe that for nonminuscule embeddings, the Gr\"obner degeneration
should be replaced by the Chiriv\`i-Lakshmibai-Seshadri-Littelmann
degeneration \cite{Ch}, and hope to address this in a separate paper.

We give an example at the end of \S \ref{sec:frobsplit} of a property
of Richardson varieties not shared by general projected Richardson varieties.

After this work was completed, we found the preprint \cite{BC} which
has significant overlap with this paper. They prove, in characteristic zero, that 
projected Richardson varieties (and others) are normal, C-M, and
have rational singularities, and unlike us, make a study of their
singular loci. To prove the relevant cohomology vanishing statements,
instead of our rather specific appeals to \cite{LL} they prove
some more general statements about Mori contractions.
They do not consider the degenerations we do, 
and so are not faced with combinatorial questions about ``$P$-Bruhat order''.

\medskip

{\bf Acknowledgements.} We thank Michel Brion, Shrawan Kumar, and
the referee for helpful comments and suggestions.

\section{Bruhat intervals and the $P$-Bruhat order}\label{sec:Bruhat}
Let $W$ be a Coxeter group and $\{s_i \mid i \in I\}$ be its set of simple generators.  We let $\ell:W \to \Z$ denote the length function of $W$.  The \newword{left weak order} of $W$ is defined by $w \prec v$ if there exists $u \in W$ such that $uw = v$ and $\ell(u)+\ell(w) = \ell(v)$.  We let $<$ denote the {Bruhat order} of $W$, and let $\lessdot$ denote a cover in Bruhat order.   The \newword{(right) descent set} of $w \in W$ is $\{i \in I \mid ws_i < w\}$.

Let $W_P \subset W$ be a parabolic subgroup.  We write $W^P$ for the minimal length coset representatives of $W/W_P$.  When $W_P$ is finite (as it will be), we also let $W^P_{\max}
$ denote the maximal length coset representatives of $W/W_P$.  Every $w \in W$ has a unique factorization as $w = w^Pw_P$ where $w^P \in W^P$, $w_P \in W_P$ and $\ell(w) = \ell(w^P)+\ell(w_P)$.  We call this the \newword{parabolic factorization} of $w$.  We denote by $\pi: W \to W/W_P$ the projection, and if necessary we write $\pi_P$ to indicate the parabolic.
We will occasionally identify $W/W_P$ with $W^P$.

Let $w,v \in W$.  We say that $w$ \newword{$P$-covers} $v$, denoted $w
\gtrdot_P v$, if $w \gtrdot v$ and $wW_P \neq vW_P$.  Let $\leq_P$
denote the \newword{$P$-Bruhat order}, the transitive closure of $P$-covers.
Thus $u \leq_P w$ if there is a saturated chain $u = v_0 \lessdot v_1
\lessdot \cdots \lessdot v_l = w$ in $W$ such that $\pi_P(v_0) <
\pi_P(v_1) < \cdots < \pi_P(v_l)$.  If $W=S_n$ and $W_P = S_k \times
S_{n-k}$, this is the relation $\leq_k$ studied in \cite{BS}.  We
write $[v,w]_P$ for a $P$-Bruhat interval in the $P$-Bruhat order.

The following combinatorial result will be established in \S \ref{sec:shelling}.

\begin{prop}\label{P:uniquelift}
Let $x \in W$ and $C$ be a coset of $W_P$.  Let $C_{\geq x}$ be the set of elements in $C$ greater than $x$.  Then $C_{\geq x}$ is either empty, or contains a unique minimum $z$.  In the latter case we have $x \leq_P z$.
\end{prop}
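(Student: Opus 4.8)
The plan is to work in the ordinary Bruhat order on $W$ via the minimal coset representative and lean on the \emph{lifting property}: for $v\le w$ and a simple reflection $s$ one has $\min(v,sv)\le\min(w,sw)$ and $\max(v,sv)\le\max(w,sw)$ (and likewise on the right). Write $d\in W^P$ for the minimal representative of $C$, so that $C=dW_P$, $\ell(du)=\ell(d)+\ell(u)$ for all $u\in W_P$, and $u\mapsto du$ is a poset isomorphism $W_P\xrightarrow{\ \sim\ }C$ carrying the longest element $w_{0,P}$ of $W_P$ to the largest element $dw_{0,P}$ of $C$. First one records the dichotomy $C_{\ge x}\neq\emptyset\iff x^P\le d$: the forward direction is immediate since $w\mapsto w^P$ is order preserving ($x^P\le y^P=d$ for any $y\in C_{\ge x}$), and the reverse holds because $x=x^Px_P\le dw_{0,P}$, which follows from the subword criterion by concatenating reduced words for $x^P\le d$ and for $x_P\le w_{0,P}$ (the concatenated word for $dw_{0,P}$ is reduced since $d\in W^P$).

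Now assume $x^P\le d$; I would prove that $C_{\ge x}$ has a minimum by induction on $2\ell(x)+\bigl(\ell(d)-\ell(x^P)\bigr)$. If $\ell(d)=\ell(x^P)$ then $d=x^P$, so $C=xW_P$ and the minimum is $x$ itself. Otherwise pick a simple reflection $s$ with $sd<d$; a short length count shows $sd\in W^P$, so $sC=(sd)W_P$ is a distinct coset with minimal representative $sd$, and — the key structural point — $\ell(sy)=\ell(y)-1$ for \emph{every} $y\in C$ (write $y=du$; then $sy=(sd)u$ with lengths additive). If $sx>x$, then $sx^P>x^P$, so the lifting property gives $x^P\le sd$; moreover, since $y\ge x$ forces $\min(y,sy)\ge\min(x,sx)=x$, left multiplication by $s$ restricts to a bijection $C_{\ge x}\leftrightarrow(sC)_{\ge x}$. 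By induction (the quantity drops by $1$) $(sC)_{\ge x}$ has a minimum $z'$; put $z:=sz'$, noting $\ell(sz')=\ell(z')+1$. One checks $z\ge x$ and that, for any $y\in C_{\ge x}$, the relation $sy\ge z'$ together with the lifting property gives $z=\max(z',sz')\le\max(sy,y)=y$ (using $\ell(sy)<\ell(y)$); so $z=\min C_{\ge x}$. If instead $sx<x$, then, using $\ell(sy)=\ell(y)-1$ on all of $C$ and the lifting property, $C_{\ge x}=C_{\ge sx}$, and the inductive quantity for $(sx,C)$ is strictly smaller because $|\ell(x^P)-\ell((sx)^P)|\le 1$ (a consequence of Deodhar's lemma); so a minimum exists by induction. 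Either way $C_{\ge x}$ has a minimum, which is automatically unique.

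Write $z$ for this minimum. For the assertion $x\le_P z$ I would prove the stand-alone statement: \emph{if $z\ge x$ and $z$ is the minimum of $\{y\in zW_P:y\ge x\}$, then $x\le_P z$}, by induction on $\ell(z)-\ell(x)$. The case $z=x$ is vacuous. If $x<z$ then $x\notin zW_P$ (otherwise $x$ would already be that minimum), so $x^P<z^P$; hence every coatom $z_1$ of the Bruhat interval $[x,z]$ satisfies $z_1\notin zW_P$, i.e.\ $z\gtrdot_P z_1$. If one can produce such a coatom $z_1$ that is itself the minimum of $\{y\in z_1W_P:y\ge x\}$, then by induction $x\le_P z_1$, and therefore $x\le_P z_1\le_P z$. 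Thus the whole proposition comes down to the single point: \emph{$[x,z]$ has a coatom which is minimal in its own coset-above-$x$.}

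That last point is the main obstacle. (One cannot shortcut it via surjectivity of $\pi$ from $[x,z]$ onto $[\pi(x),\pi(z)]$ — that already fails for $W=S_3$, $W_P=\langle s_1\rangle$, $x=s_1$, $z=s_1s_2$, where the middle coset of the length-one interval is skipped yet $x\le_P z$ still holds.) I expect to establish it by choosing the coatom $z_1$ of $[x,z]$ with $\ell(z_1^P)$ as large as possible and showing, via the thinness of length-two Bruhat intervals, the lifting property, and the gradedness of $W^P$, that there can be no $w\in z_1W_P$ with $x\le w<z_1$. This is exactly the kind of fact about $P$-Bruhat order that the detailed combinatorial analysis of \S\ref{sec:shelling} is designed to yield; everything else above is a formal consequence of the lifting property and the subword criterion.
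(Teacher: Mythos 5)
Your first half---existence and uniqueness of the minimum of $C_{\geq x}$ by induction on $2\ell(x)+(\ell(d)-\ell(x^P))$ using the lifting property and the minimal representative $d$ of $C$---is correct, and it is a genuinely different route from the paper, which instead takes two minimal elements of $C_{\geq x}$, bounds them above inside $C$ (Lemma~\ref{L:upper bound}), and identifies them by walking between chains using Dyer's lexicographic shellability and the diamond property (Propositions~\ref{prop diamond} and~\ref{prop lex inc}). Your approach is more elementary in that it needs only the lifting property and produces the minimum constructively; the paper's approach is the one that sets up the reflection-order machinery it needs later in \S\ref{sec:shelling} anyway.

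The second half, however, has a genuine gap, and you have flagged it yourself: your induction on $\ell(z)-\ell(x)$ reduces everything to the claim that $[x,z]$ contains a coatom $z_1$ which is again the minimum of $(z_1W_P)_{\geq x}$, and for this you offer only a plan (take the coatom with $\ell(z_1^P)$ maximal and invoke thinness, lifting, gradedness), not an argument. Nothing you have proved forces any particular coatom to be coset-minimal above $x$; in particular the truth of the proposition itself does not obviously supply such a coatom, since a $P$-Bruhat chain from $x$ to $z$ need not pass through coset-minimal elements, so the claim is not a formal consequence of what precedes it and would require a genuine proof. The missing idea is exactly the one the paper uses at this point: fix a reflection order $\prec$ in which the reflections of $W_P$ come after all others (Proposition~\ref{prop P last}) and take the $\prec$-increasing saturated chain $x=c_0\lessdot c_1\lessdot\cdots\lessdot c_\ell=z$ (Proposition~\ref{prop lex inc}). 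If some label $c_{i-1}^{-1}c_i$ lay in $W_P$, then all later labels would lie in $W_P$ as well, so $c_{i-1}\in zW_P=C$ and $x\le c_{i-1}<z$, contradicting the minimality of $z$; hence every cover in this chain changes the coset, i.e.\ it is a $P$-Bruhat chain and $x\le_P z$ follows directly, with no coatom claim and no second induction. Grafting that one-paragraph argument onto your first half would complete the proof; as written, the proposal does not establish $x\le_P z$.
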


\begin{lemma}\label{L:Pleftweak}
Suppose $v \lessdot_P w$.  Let $v = v^P v_P$ and $w = w^P w_P$ be parabolic factorizations where $v^P, w^P \in W^P$ and $v_P,w_P \in W_P$.  Then $w_P \preceq v_P$ in left weak order.  In particular, the descent set of $v_P$ contains the descent set of $w_P$.
\end{lemma}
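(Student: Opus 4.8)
The plan is to choose a reduced word for $w$ adapted to the parabolic factorization $w=w^Pw_P$ and to track the single Bruhat cover $v\lessdot w$ at the level of that word; once that is set up, length bookkeeping forces the extra unit of length into the $W^P$-part, so that $w_P$ can only move \emph{up}, in left weak order, to $v_P$. Concretely, I would first fix reduced words $w^P=s_{a_1}\cdots s_{a_p}$ with $p=\ell(w^P)$ and $w_P=s_{b_1}\cdots s_{b_q}$ with $q=\ell(w_P)$, each $s_{b_i}$ a simple reflection lying in $W_P$. Since $\ell(w)=\ell(w^P)+\ell(w_P)$, the concatenation $s_{a_1}\cdots s_{a_p}s_{b_1}\cdots s_{b_q}$ is a reduced word for $w$ of length $n:=p+q$. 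Because $v<w$ and $\ell(v)=n-1$, the subword property exhibits $v$ as this word with a single letter deleted; that letter lies either in the ``$W_P$-block'' $s_{b_1}\cdots s_{b_q}$ or in the ``$W^P$-block'' $s_{a_1}\cdots s_{a_p}$.

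Next I would eliminate the first case using the hypothesis $vW_P\ne wW_P$. If $v=w^P\bigl(s_{b_1}\cdots\widehat{s_{b_i}}\cdots s_{b_q}\bigr)$, call the second factor $w'_P\in W_P$; it has $\ell(w'_P)\le q-1$, so $\ell(w^P)+\ell(w'_P)\le p+q-1=\ell(v)$, and since always $\ell(v)\le\ell(w^P)+\ell(w'_P)$ we get equality. Thus $v=w^Pw'_P$ is \emph{the} parabolic factorization of $v$, by its uniqueness, whence $v^P=w^P$ and $vW_P=wW_P$, contradicting $v\lessdot_P w$. Therefore the deleted letter lies in the $W^P$-block, so $v=v'w_P$ with $v':=s_{a_1}\cdots\widehat{s_{a_j}}\cdots s_{a_p}$; the same comparison ($\ell(v')\le p-1$ together with $\ell(v)\le\ell(v')+\ell(w_P)$ and $\ell(v)=p+q-1$) forces $\ell(v')=p-1$ and $\ell(v)=\ell(v')+\ell(w_P)$.

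Now I take the parabolic factorization $v'=(v')^P(v')_P$ and rewrite $v=(v')^P\bigl[(v')_Pw_P\bigr]$. Submultiplicativity of $\ell$ gives $\ell(v)\le\ell((v')^P)+\ell((v')_Pw_P)\le\ell((v')^P)+\ell((v')_P)+\ell(w_P)=\ell(v')+\ell(w_P)=\ell(v)$, so all of these are equalities. By uniqueness of the parabolic factorization this identifies $v^P=(v')^P$ and $v_P=(v')_Pw_P$, and it also yields $\ell(v_P)=\ell((v')_P)+\ell(w_P)$. Hence, setting $u:=(v')_P\in W_P$, we have $uw_P=v_P$ with $\ell(u)+\ell(w_P)=\ell(v_P)$, which is exactly $w_P\preceq v_P$ in left weak order.

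Finally, the ``in particular'' clause is the standard fact that $w_P\preceq v_P$ forces $\operatorname{Des}(w_P)\subseteq\operatorname{Des}(v_P)$: if $w_Ps_i<w_P$ then $\ell(v_Ps_i)\le\ell(u)+\ell(w_Ps_i)=\ell(v_P)-1$, so $s_i$ is a descent of $v_P$ as well. I expect the only genuine obstacle to be the middle step — showing the cover cannot delete a letter from the $W_P$-block — which is the one place the argument really uses $\lessdot_P$ rather than merely $\lessdot$; everything else is bookkeeping with the subword property, submultiplicativity of $\ell$, and uniqueness of parabolic factorizations.
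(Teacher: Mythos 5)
Your proof is correct and takes essentially the same route as the paper: pick a reduced word for $w$ concatenating reduced words for $w^P$ and $w_P$, observe the deleted letter in the cover $v\lessdot w$ cannot lie in the $w_P$-block (else $v^P=w^P$, contradicting $vW_P\neq wW_P$), and then deduce $v_P=(v')_Pw_P$ length-additively. The paper states this more tersely; your length bookkeeping and appeal to uniqueness of the parabolic factorization just spell out the final step.
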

\begin{proof}
Since $v \lessdot w$, a reduced word for $v$ can be obtained from removing a simple generator from a reduced word for $w$.  Take a reduced word $\a \b$ for $w$ where $\a$ is a reduced word for $w^P$ and $\b$ is a reduced word for $w_P$.  The removed simple generator is inside $\a$, for otherwise $w^P = v^P$.  But then it follows that $v_P$ has a reduced word of the form $\a' \b$, so that $w_P \preceq v_P$.
\end{proof}

\begin{lemma}\label{L:lengthadd}
Suppose $u \leq v$ and $x \leq y$ in $W$, and $x = uz$ and $y=vz$ are
both length-additive factorizations with $z \in W_P$.  Then $x \leq_P y$ if and only if $u \leq_P v$.
\end{lemma}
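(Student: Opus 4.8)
The plan is to prove both implications simultaneously, by inducting on $\ell(z)$ to reduce to the case $z = s$ a single simple generator of $W$ lying in $W_P$, and then to analyze how a $P$-cover behaves under right multiplication by such an $s$. For the reduction, I would write $z = z's$ length-additively with $s \in W_P$ simple (take the last letter of a reduced word for $z$), so $z' \in W_P$ and $\ell(z') = \ell(z)-1$. Setting $a := uz'$ and $b := vz'$, the length-additivity of $x = uz$ and $y = vz$ forces $a = xs$, $b = ys$ with $\ell(a) = \ell(x)-1$, $\ell(b) = \ell(y)-1$; in particular $as = x > a$, $bs = y > b$, and $a = uz'$, $b = vz'$ are again length-additive over $z' \in W_P$. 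The inductive hypothesis then identifies $u \leq_P v$ with $a \leq_P b$, so the lemma reduces to the following \emph{core claim}: if $s \in W_P$ is a simple generator and $a, b \in W$ satisfy $as > a$ and $bs > b$, then $a \leq_P b \iff as \leq_P bs$.

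The point of the core claim is to understand whether multiplying the vertices of a $P$-chain by $s$ moves them up or down, and this is the step I expect to be the real obstacle: one must prevent such a chain from passing through an intermediate vertex on the ``wrong side'' of $s$. I would argue as follows. For simple $s \in W_P$ and any $c \in W$, the parabolic factorization of $cs$ is $c^P(c_P s)$ --- this uses the standard identity $\ell(c^P q) = \ell(c^P) + \ell(q)$ for $q \in W_P$ --- so $cs > c$ exactly when $c_P s > c_P$, i.e. when $s \notin D(c_P)$, where $D(\cdot)$ denotes the right descent set. Now along any saturated $P$-chain $c_0 \lessdot_P c_1 \lessdot_P \cdots \lessdot_P c_l$, Lemma~\ref{L:Pleftweak} gives $D\big((c_{i+1})_P\big) \subseteq D\big((c_i)_P\big)$ at every step, so these descent sets are nested and weakly shrinking. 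Hence: in a $P$-chain from $a$ to $b$, since $as > a$ and $bs > b$ give $s \notin D(a_P)$ (the largest of the sets), we get $s \notin D\big((c_i)_P\big)$ for all $i$, so $c_i s > c_i$ throughout; dually, in a $P$-chain $as = d_0 \lessdot_P \cdots \lessdot_P d_l = bs$, since $as > a$ and $bs > b$ make $s$ a right descent of the $W_P$-parts $a_P s = (d_0)_P$ and $b_P s = (d_l)_P$, and $D\big((d_l)_P\big)$ is the smallest of the nested sets, we get $s \in D\big((d_i)_P\big)$ for all $i$, so $d_i s < d_i$ throughout.

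With this in hand both directions are straightforward. For ``$\Rightarrow$'', take a $P$-chain $a = c_0 \lessdot_P \cdots \lessdot_P c_l = b$; for each cover $c_i \lessdot_P c_{i+1}$ we have $c_i s > c_i$ and $c_{i+1}s > c_{i+1}$, so the lifting property of the Bruhat order gives $c_i s < c_{i+1}s$, and since $\ell(c_{i+1}s) - \ell(c_i s) = 1$ this is a cover, which is a $P$-cover because $c_i sW_P = c_iW_P \neq c_{i+1}W_P = c_{i+1}sW_P$ (as $s \in W_P$); concatenating gives $as \leq_P bs$. For ``$\Leftarrow$'', apply the identical argument to a $P$-chain $as = d_0 \lessdot_P \cdots \lessdot_P d_l = bs$, now using that $d_i s < d_i$ and $d_{i+1}s < d_{i+1}$ and the downward form of the lifting property, to conclude $a = d_0 s \lessdot_P \cdots \lessdot_P d_l s = b$, i.e. $a \leq_P b$. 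After the core claim, the inductive reduction of the first paragraph finishes the proof.
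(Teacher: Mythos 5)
Your proof is correct and follows essentially the same route as the paper's: reduce to $z$ a single simple generator $s\in W_P$, use Lemma~\ref{L:Pleftweak} to propagate the (non-)descent of $s$ along the $W_P$-parts of a saturated $P$-chain, and then multiply the whole chain by $s$, checking via the lifting property that covers stay covers and that cosets are unchanged so $P$-covers stay $P$-covers. Your explicit formulation of the nested-descent-set observation and of the reduction step just spells out what the paper leaves as "continuing in this manner" and "it suffices to prove the case $z=s_i$."
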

\begin{proof}
It suffices to prove the case that $z = s_i \in W_P$.  Suppose $u \leq_P v$.  Let $u = w_0 \lessdot_P w_1 \lessdot_P w_2 \lessdot_P \cdots \lessdot_P w_N= v$ be a saturated chain.  
Since $u s_i$ is length-additive, the reflection $s_i$ is not a right descent of $u$.
So, by Lemma~\ref{L:Pleftweak}, $s_i$ is also not a right descent of $w_1$, and $w_1 s_i$ is length-additive.
Continuing in this manner, we see that $\ell(w_j s_i) = \ell(w_j) + 1$ for every $j$. 
So we have the chain of covers $x \lessdot w_1s_i \lessdot w_2s_i \lessdot \cdots \lessdot  y$.
Moreover, $(w_j s_i) (w_{j+1} s_i)^{-1} = w_j w_{j+1}^{-1}$ so, since $ w_j w_{j+1}^{-1}$  is assumed not to be in $W_P$, we have $(w_j s_i) (w_{j+1} s_i)^{-1} \not \in W_P$ as well.

So we have a chain of $P$-Bruhat covers $x \lessdot_P w_1s_i \lessdot_P w_2s_i \lessdot_P \cdots \lessdot_P  y$, and we deduce that $x \leq_P y$.
The reverse direction is similar, starting with the fact that $s_i$ is a right descent of $y$ and working down the chain. 
\end{proof}

Define an equivalence relation on the set of $P$-Bruhat intervals, generated by the relations $[u,v]_P \sim [x,y]_P$ if there is $z \in W_P$ such that $x=uz$ and $y=vz$ are both length-additive.  We let $\langle u,v\rangle_P$ denote an equivalence class of $P$-Bruhat intervals.

\begin{lemma} \label{L:Shift}
Suppose that $u \leq_P w$.  Then $u (w_P)^{-1} \leq_P w^P$. 
In particular, every equivalence class $\langle u, v \rangle_P$ has a representative $[x,y]_P$ with $y \in W^P$.
\end{lemma}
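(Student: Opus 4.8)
The plan is to deduce the statement from Lemma~\ref{L:lengthadd}, applied with $z = w_P$. For that I need the factorizations $u = (uw_P^{-1})\,w_P$ and $w = w^P w_P$ to be length-additive, the Bruhat relation $u \leq w$ (which is immediate from $u \leq_P w$, since a chain of $P$-covers is a chain of Bruhat covers), and the Bruhat relation $uw_P^{-1} \leq w^P$; the last is the only point with real content.

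To get the length-additivity, fix a saturated chain $u = v_0 \lessdot_P v_1 \lessdot_P \cdots \lessdot_P v_N = w$ in the $P$-Bruhat order. Applying Lemma~\ref{L:Pleftweak} to each cover gives $(v_{i+1})_P \preceq (v_i)_P$ in left weak order, so by transitivity $w_P = (v_N)_P \preceq (v_0)_P = u_P$. Hence $u_P = t\, w_P$ with $t := u_P w_P^{-1} \in W_P$ and $\ell(u_P) = \ell(t) + \ell(w_P)$. Writing $uw_P^{-1} = u^P t$ with $u^P \in W^P$ and $t \in W_P$, and using the standard identity $\ell(xy) = \ell(x)+\ell(y)$ for $x\in W^P$, $y\in W_P$, we get $\ell(uw_P^{-1}) = \ell(u^P)+\ell(t) = \ell(u) - \ell(w_P)$; combined with the parabolic factorization $w = w^P w_P$ this provides the two desired length-additive factorizations, with common right factor $w_P \in W_P$. (It also records that $u^P$ is the $W^P$-part of $uw_P^{-1}$, which will be convenient.)

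It remains to see $uw_P^{-1} \leq w^P$. From $(uw_P^{-1})w_P = u \leq w = w^P w_P$ and the length-additivity just established, this follows from the standard fact that $xz \leq yz$ together with length-additivity of both $xz$ and $yz$ forces $x \leq y$ --- itself a short induction on $\ell(z)$ using the lifting property of Bruhat order under right multiplication by a simple reflection. Now all hypotheses of Lemma~\ref{L:lengthadd} hold with its ``$x,y$'' equal to $u,w$ and its ``$u,v$'' equal to $uw_P^{-1}, w^P$, so $u \leq_P w \iff uw_P^{-1} \leq_P w^P$, and the forward direction gives the claim. For the final sentence, given a $P$-Bruhat interval $[u,v]_P$ we apply the claim with $w=v$ to get $uv_P^{-1} \leq_P v^P$; since $u = (uv_P^{-1})v_P$ and $v = v^P v_P$ are both length-additive, $[uv_P^{-1}, v^P]_P \sim [u,v]_P$, and $v^P \in W^P$. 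The only step needing input beyond Lemmas~\ref{L:Pleftweak} and~\ref{L:lengthadd} is the Bruhat comparison $uw_P^{-1}\leq w^P$, since Lemma~\ref{L:lengthadd} takes such relations as hypotheses rather than producing them; I expect that to be the main thing to get right.
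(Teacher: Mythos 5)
Your proof is correct and follows essentially the paper's own route: Lemma~\ref{L:Pleftweak}, iterated along a saturated $P$-chain, gives $w_P \preceq u_P$, hence the two length-additive factorizations with common right factor $w_P$, and Lemma~\ref{L:lengthadd} then finishes, with the last sentence obtained exactly as you do. The only difference is that you separately verify the Bruhat comparison $uw_P^{-1}\leq w^P$ demanded by the hypotheses of Lemma~\ref{L:lengthadd} (via the standard cancellation/lifting argument), a point the paper leaves implicit --- harmless extra care rather than a different approach, since that hypothesis is not actually used in the direction of Lemma~\ref{L:lengthadd} being invoked.
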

\begin{proof}
By Lemma \ref{L:Pleftweak}, we have $w_P \preceq u_P$, so that $u_P = zw_P$ for some $z \in W_P$.  The result then follows from Lemma \ref{L:lengthadd}.
\end{proof}

\begin{prop} \label{P:TopBottom}
If $u \leq w$ and $w \in W^P$, then $u \leq_P w$.
\end{prop}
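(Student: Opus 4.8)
The plan is to deduce Proposition~\ref{P:TopBottom} quickly from Proposition~\ref{P:uniquelift}, using the standard fact that if $w \in W^P$ then $w$ is not merely the shortest but in fact the Bruhat-\emph{minimal} element of its coset $wW_P$, i.e.\ $w \leq y$ for every $y \in wW_P$ (concatenate reduced words for $w$ and for an element of $W_P$ and apply the subword criterion).

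So suppose $u \leq w$ with $w \in W^P$. If $u = w$ there is nothing to prove, so assume $u < w$; then $u \notin wW_P$, since otherwise $w \leq u$ by the fact just quoted, contradicting $u < w$. Now apply Proposition~\ref{P:uniquelift} with $x = u$ and $C = wW_P$: the set $C_{\geq u}$ contains $w$, hence is nonempty, so it has a unique Bruhat-minimum $z$, and moreover $u \leq_P z$. Since $z \in wW_P$, the quoted fact gives $w \leq z$; since $w \in C_{\geq u}$, minimality of $z$ gives $z \leq w$. Therefore $z = w$, and so $u \leq_P w$.

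Because Proposition~\ref{P:uniquelift} is the real engine here, there is essentially nothing left to do; the only point worth isolating is \emph{why} the hypothesis $w \in W^P$ makes the argument go, namely that it forces $wW_P$ to have no element strictly below $w$ (so in particular every Bruhat cover $v \lessdot w$ is automatically a $P$-cover $v \lessdot_P w$). This same fact also underlies a more hands-on proof that sidesteps Proposition~\ref{P:uniquelift}: induct on $\ell(w) - \ell(u)$, the base case $u=w$ being trivial; for the inductive step it suffices to produce some $v$ with $u \lessdot_P v \leq w$, since then $w$ is still in $W^P$ and $\ell(w) - \ell(v) < \ell(w) - \ell(u)$, so the inductive hypothesis gives $v \leq_P w$ and hence $u \leq_P w$. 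I expect the existence of such a $v$ to be the one genuine obstacle: it amounts to showing that not every Bruhat cover of $u$ lying below $w$ stays inside the coset $uW_P$. A simple right descent $s$ of $w$ necessarily lies outside $W_P$ (as $w \in W^P$), and the lifting property of Bruhat order yields $us \leq w$ and $u \lessdot us$ whenever $us > u$, which is the desired $v$ (and $us \notin uW_P$ since $s \notin W_P$); the complementary case $us < u$ has to be handled by a separate reduction, and it is precisely this bookkeeping about lifting in Bruhat order that Proposition~\ref{P:uniquelift} packages once and for all, which is why I would route the proof through it.
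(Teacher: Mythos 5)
Your proof is correct and is essentially the paper's own argument: the paper also applies Proposition~\ref{P:uniquelift} to $C = wW_P$, noting that $w$, being Bruhat-minimal in its coset, is the minimum of $C_{\geq u}$, so $u \leq_P w$ follows immediately. The extra observations (e.g.\ $u \notin wW_P$ when $u < w$) and the sketched inductive alternative are fine but not needed.
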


\begin{proof}
Let $C$ be the coset $w W_P$.  Since $w$ is the minimum of $C_{\geq u}$, the result follows immediately from Proposition \ref{P:uniquelift}.
\end{proof}

Let $Q(W,W_P)$ be the set of equivalence classes of $P$-Bruhat intervals.
By Proposition~\ref{P:TopBottom} and Lemma~\ref{L:Shift}, each element of $Q(W,W_P)$ can be represented uniquely by a pair $(u,w)$ where $w\in W^P$. 

Suppose now that $W$ is a finite Weyl group.  Writing $u$ as $u' x$, where $u' \in W^P_{\max}$ and $x \in W_P$, we see that $Q(W,W_P)$ is in bijection with the set of triples $(u', w, x)$ where $w \in W^P$, $u'\in W^P_{\max}$, $x \in W_P$, and $u'x \leq w$. 
Using this last description, we see that our set $Q(W, W_P)$ is the same as Rietsch's $Q^J$ \cite{Rie}.

\section{Projected Richardson varieties} \label{sec:proj}

We now introduce the projected Richardson varieties, our principal
objects of study. Fix a parabolic $P \supset B$ in $G$. 

Let $u \leq_P w$ be a $P$-Bruhat interval in $W$.  We define $\Pio_u^w = \pi(\Xo_u^w)$ and $\Pi_u^w = \pi(X_u^w)$, the \newword{open} and \newword{closed projected Richardson varieties}.  Here $\Pi$ stands for ``projection'', ``positroid'' and ``Postnikov''.  The Grassmannian case of projected Richardson varieties, called \newword{positroid varieties}, are studied in \cite{KLS} motivated by work of Postnikov \cite{Pos}.  Note that, since $X_u^w$ is proper and irreducible, $\Pi_u^w$ is likewise, and is therefore the closure of $\Pio_u^w$. 

The projected Richardson varieties were studied previously by Lusztig \cite{Lus} and Rietsch \cite{Rie} in the context of total positivity, and by Goodearl and Yakimov \cite{GY} in the context of Poisson geometry. Very recently (while we were finishing this paper), 
they were studied in \cite{BC}. 

We now discuss the elementary geometry and combinatorics of the map $\pi$ from $X_u^w$ to $\Pi_u^w$.  The next lemma relates projected Richardson varieties to the set $Q(W, W_P)$ of \S \ref{sec:Bruhat}:

\begin{lemma} \label{L:WhyQ}
Suppose $u \leq_P w$ and $u' \leq_P w'$. If $(u,w) \sim (u', w')$ in $Q(W, W_P)$, then $\pi(\Xo_u^w) = \pi(\Xo_{u'}^{w'})$. 
Moreover, $\pi$ is injective on $\Xo_u^w$.
\end{lemma}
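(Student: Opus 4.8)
The plan is to reduce the statement to the case of a single ``simple move'' in the equivalence relation $\sim$, and then to analyze a $\mathbb{P}^1$-bundle.

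\emph{Reduction.} I would first observe that $\sim$ is generated by the relations $[u,w]_P \sim [us_i, ws_i]_P$ with $s_i \in W_P$ and $us_i > u$, $ws_i > w$: any defining relation $[u,w]_P \sim [uz,wz]_P$ with $z \in W_P$ and $uz$, $wz$ length-additive factors as a chain of such steps, since writing $z$ as a reduced word $s_{i_1}\cdots s_{i_k}$ makes each partial product $us_{i_1}\cdots s_{i_j}$ and $ws_{i_1}\cdots s_{i_j}$ length-additive, and by Lemma~\ref{L:lengthadd} each intermediate pair is a valid $P$-Bruhat interval. So for the first assertion it suffices to prove $\pi(\Xo_u^w) = \pi(\Xo_{us_i}^{ws_i})$ for one such step; for the second, it suffices (invoking Lemma~\ref{L:Shift}) to prove that injectivity of $\pi$ on the open Richardson cell is unchanged by such a step, and that it holds when $w \in W^P$.

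\emph{The $\mathbb{P}^1$-bundle.} Fix such an $s_i$, let $P_i \supseteq B$ be the corresponding minimal parabolic, and let $p_i: G/B \to G/P_i$ be the associated $\mathbb{P}^1$-bundle. Since $s_i \in W_P$ we have $P_i \subseteq P$, so $\pi$ factors as $q \circ p_i$ with $q : G/P_i \to G/P$. I would then use the standard description of $p_i$ on Schubert cells: $p_i^{-1}(p_i(\Xo_u)) = \Xo_u \sqcup \Xo_{us_i}$ and $p_i^{-1}(p_i(\Xo^w)) = \Xo^w \sqcup \Xo^{ws_i}$, where, because $us_i > u$ and $ws_i > w$, the restrictions $p_i|_{\Xo_{us_i}}$ and $p_i|_{\Xo^w}$ are bijections onto their images while $\Xo_u$ and $\Xo^{ws_i}$ are open dense in the respective preimages. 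Hence over any fiber $F \cong \mathbb{P}^1$ of $p_i$ lying above a point of $p_i(\Xo_u) \cap p_i(\Xo^w)$ one has $\Xo_{us_i} \cap F = \{a\}$ and $\Xo^w \cap F = \{b\}$ for single points $a,b$, and $\Xo_u \cap F = F \setminus\{a\}$, $\Xo^{ws_i}\cap F = F\setminus\{b\}$. Consequently $\Xo_u^w \cap F = \{b\}\setminus\{a\}$ and $\Xo_{us_i}^{ws_i}\cap F = \{a\}\setminus\{b\}$, each of which is a single point when $a \neq b$ and empty when $a = b$. Therefore $p_i(\Xo_u^w) = p_i(\Xo_{us_i}^{ws_i})$ (both equal the set of such fibers with $a\neq b$), and $p_i$ restricts to a bijection from each of $\Xo_u^w$ and $\Xo_{us_i}^{ws_i}$ onto this common image, call it $Z$.

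\emph{Conclusion.} Composing with $q$ gives $\pi(\Xo_u^w) = q(Z) = \pi(\Xo_{us_i}^{ws_i})$, and chaining over the generating steps proves the first assertion. For injectivity, both $\pi|_{\Xo_u^w}$ and $\pi|_{\Xo_{us_i}^{ws_i}}$ equal $q|_Z$ precomposed with a bijection onto $Z$, so one is injective iff the other is; hence injectivity of $\pi$ on the open Richardson cell is a $\sim$-invariant, and by Lemma~\ref{L:Shift} I may assume $w \in W^P$. Then $\Xo_u^w \subseteq \Xo^w = B^+ w B/B$, and I would check that $\pi|_{\Xo^w} : B^+wB/B \to B^+wP/P$ is injective: it is a $B^+$-equivariant map of homogeneous $B^+$-spaces, and $w \in W^P$ forces $B^+ \cap wPw^{-1} = B^+ \cap wBw^{-1}$, because the roots of $wPw^{-1}$ not already in $wBw^{-1}$ form $w(\Phi_L^-)$ (with $\Phi_L$ the root system of the Levi of $P$), which lies in $\Phi^-$ since $w$ sends positive roots of the Levi to positive roots, and so meets $B^+$ trivially. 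Thus $\pi$ is injective on $\Xo^w$, hence on $\Xo_u^w$. The step requiring the most care is the fiber analysis: determining for each of $\Xo_u$, $\Xo_{us_i}$, $\Xo^w$, $\Xo^{ws_i}$ whether its trace on a $\mathbb{P}^1$-fiber of $p_i$ is a single point or the complement of one — this is exactly where the hypotheses $us_i>u$ and $ws_i>w$ are used, and getting the two ``orientations'' right (the smaller cell is the point in one pair and the complement of the point in the other) is what makes $\Xo_u^w$ and $\Xo_{us_i}^{ws_i}$ share an image and makes $p_i$ injective on each.
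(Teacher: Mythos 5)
Your proposal is correct and follows essentially the same route as the paper: reduce via Lemma~\ref{L:lengthadd} to a single length-additive step by a simple reflection $s\in W_P$, factor $\pi$ through the minimal-parabolic $\PP^1$-bundle, and run the same fiber analysis (one cell meets the fiber in a point, the other in the complement of a point, with the two orientations exactly as you have them) to get equal images and single-point fibers. The only difference is that you make the injectivity base case explicit (reducing to $w\in W^P$ and checking $B^+\cap wPw^{-1}=B^+\cap wBw^{-1}$), which the paper leaves implicit by appealing to Lusztig and Rietsch; this is a completion of detail rather than a different approach.
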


\begin{proof}
%

By Lemma \ref{L:lengthadd}, it suffices to consider the case where $w' = ws$ and $u'=us$ are length-additive, and $s$ is a simple reflection in $W_P$.

Let $R$ be the parabolic subgroup whose dimension is one more than $B$ and which corresponds to $s$.
So $R \subseteq P$. Since $\pi_P$ factors through $\pi_R$, it is enough to show that $\pi_R(\Xo_{u}^w) = \pi_R(\Xo_{u'}^{w'})$.

The map $\pi_R$ is a $\PP^1$-bundle. 
Let's focus on a single fiber $F$. The Schubert stratification of $G/B$ divides this fiber into a point $p$ and an affine line, and the opposite Schubert stratification marks off another point $q$. (Generically, $p \neq q$, but in some fibers they conicide.)
The condition that $u < us$ ensures that either
\begin{itemize}
\item[(u1)] the intersection $\Xo_u \cap F$ is $F \setminus \{ p\}$ and $\Xo_{us} \cap F$ is $\{ p \}$ or
\item[(u2)] both intersections are empty.
\end{itemize}
Similarly, since $w < ws$, either
\begin{itemize}
\item [(w1)] the intersection $\Xo^w \cap F$ is $\{ q \}$ and
  $\Xo^{ws} \cap F$ is $F \setminus \{ q \}$ or
\item [(w2)] both intersections are empty.
\end{itemize}
If either (u2) or (w2) holds, then $F \cap \Xo_u^w = F \cap \Xo_{us}^{ws} = \emptyset$. 
If both (u1) and (w1) hold then, if $p \neq q$, the intersections $F \cap \Xo_u^w$ and $F \cap \Xo_{us}^{ws}$ are each a single point; if $p=q$, these intersections are both empty.
In either case, we see that $\pi_R(F)$ is in $\pi_R(\Xo_u^w)$ if and only if it is in $\pi_R(\Xo_{us}^{ws})$ and that, if it is, the fiber above it is a single point in both cases.
\end{proof}

It is known \cite{Lus,Rie} that such $\Pio_u^w$ are smooth, since the projection is an isomorphism with an open Richardson variety which is known to be smooth.

\begin{cor} \label{C:ProjectionDimension}
For $u \leq_P w$, the dimension of $\Pi_u^w$ is $\ell(w) - \ell(u)$.  
The \newword{interior} $\Pio_u^w$ is smooth.
\end{cor}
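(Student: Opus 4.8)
The plan is to transport both assertions across the projection $\pi$ to the open Richardson variety $\Xo_u^w \subseteq G/B$, whose dimension and smoothness are classical, using Lemma~\ref{L:WhyQ} to control the map.

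I would first treat the dimension. Since $\Pi_u^w$ is by definition the closure of $\Pio_u^w$, it is enough to compute $\dim \Pio_u^w$. By Lemma~\ref{L:WhyQ} the map $\pi$ restricts to a bijective morphism $\Xo_u^w \to \Pio_u^w$ of irreducible varieties (surjectivity is the definition of $\Pio_u^w$, injectivity is the lemma); being dominant with finite fibers it preserves dimension, so $\dim \Pio_u^w = \dim \Xo_u^w$. Now $\Xo_u^w = \Xo_u \cap \Xo^w$ is the open Richardson variety in the full flag manifold; it is nonempty because $u \leq_P w$ forces $u \leq w$, and it has dimension $\ell(w)-\ell(u)$ by the formula recalled in \S\ref{sec:intro} (the case $P=B^+$, where $u^P=u$ and $w^P=w$). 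Combining these gives $\dim \Pi_u^w = \ell(w)-\ell(u)$.

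For smoothness of $\Pio_u^w$ I would invoke the sharper statement, due to Lusztig~\cite{Lus} and Rietsch~\cite{Rie} and recalled just before the corollary, that $\pi$ restricts to an \emph{isomorphism} $\Xo_u^w \cong \Pio_u^w$, together with the fact that $\Xo_u^w$ --- an open subvariety of the Richardson variety $X_u^w$ --- is smooth. The one point that genuinely needs care is exactly this upgrade: a bijective morphism from a smooth variety need not have smooth image in positive characteristic, so the smoothness of $\Pio_u^w$ really rests on the Lusztig--Rietsch isomorphism rather than on the set-theoretic injectivity of Lemma~\ref{L:WhyQ} alone; the dimension statement, by contrast, uses only bijectivity and needs nothing beyond the present section.
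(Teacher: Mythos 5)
Your proposal is correct and matches the paper's own (very brief) argument: the dimension statement is drawn from the injectivity in Lemma~\ref{L:WhyQ} together with $\dim \Xo_u^w = \ell(w)-\ell(u)$, and the smoothness is obtained exactly as you say, by citing the Lusztig--Rietsch fact that $\pi$ restricts to an isomorphism of $\Xo_u^w$ onto $\Pio_u^w$ with the open Richardson variety known to be smooth. Your remark that set-theoretic bijectivity alone would not give smoothness in positive characteristic is a fair point of care, consistent with the paper's reliance on the isomorphism rather than on Lemma~\ref{L:WhyQ} for that part.
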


So, given an equivalence class $[(u,w)]$ in $Q(W, W_P)$, the open
projected Richardson variety $\Pio_u^w$ is well defined.

A variation of the above proof lets us describe $\pi(X_u^w)$ for any Richardson variety $X_u^w$: recall that \cite{KM} the \defn{Demazure product} $\circ$ of $W$
is defined, for $s$ a simple reflection, by
$$
w \circ s  = \begin{cases} ws & \mbox{if $ws > w$,} \\
w &\mbox{otherwise.}
\end{cases}
$$
One then defines $w \circ v$ by picking a reduced expression $v =
s_{1} s_{2} \cdots s_{\ell}$ and setting $w \circ v = (((w
\circ s_{1}) \circ s_{2}) \circ \cdots) \circ s_{\ell}$. (So
if $wv$ is length-additive, then $w\circ v = wv$.) The result does
not depend on the choice of reduced expression.

\begin{prop} \label{P:EquivalenceClassClosed}
Let $u \leq w$ and let $x$ be the element of $W_P$ such that
 $ux$ is $W_P$-maximal. Then $\pi(X_u^w) = \pi(X_{ux}^{w \circ x})$.
 \end{prop}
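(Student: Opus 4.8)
The plan is to extend the fiber analysis behind Lemma~\ref{L:WhyQ}, this time keeping track of closures by working with entire fibers of a $\PP^1$-bundle rather than with individual open cells. First reduce to the case in which $x$ is a single simple reflection of $W_P$. Since $ux$ is the longest element of the coset $uW_P$, we have $ux = u^P w_{0,P}$, where $w_{0,P}$ denotes the longest element of $W_P$, and hence $x = (u_P)^{-1} w_{0,P}$; the identity $\ell((u_P)^{-1}w_{0,P}) = \ell(w_{0,P}) - \ell(u_P)$ then shows $\ell(u)+\ell(x)=\ell(ux)$, so $u\cdot x$ is length-additive. Fix a reduced word $x = s_{i_1}\cdots s_{i_m}$ with each $s_{i_j}\in W_P$; then every partial product $u\,s_{i_1}\cdots s_{i_j}$ is length-additive, so at each step we multiply on the right by a simple reflection which is not a descent, and $w\circ x$ is obtained from $w$ by applying $\circ\,s_{i_1},\dots,\circ\,s_{i_m}$ in turn. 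Thus it suffices to prove the single-step statement: \emph{if $s\in W_P$ is a simple reflection, $us>u$, and $u\le w$, then $\pi(X_u^w)=\pi(X_{us}^{w\circ s})$.} The lifting property of Bruhat order also gives $us\le w\circ s$, so the intermediate Richardson varieties are nonempty, and iterating the single-step statement $m$ times yields the proposition.

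For the single-step statement, let $R\supseteq B$ be the minimal parabolic corresponding to $s$, so that $R\subseteq P$ and $\pi_P$ factors through the $\PP^1$-bundle $\pi_R\colon G/B\to G/R$; it is then enough to prove $\pi_R(X_u^w)=\pi_R(X_{us}^{w\circ s})$. I would first record two facts about $\pi_R$, each following from the Bruhat cell decompositions of $X_v$ and $X^v$ together with the lifting property: (a) $X_v$ is a union of $\pi_R$-fibers when $vs>v$, and $X^v$ is a union of $\pi_R$-fibers when $vs<v$; and (b) $\pi_R(X_{vs})=\pi_R(X_v)$ and $\pi_R(X^{vs})=\pi_R(X^v)$ whenever $vs>v$. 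For (a), writing $X_v=\coprod_{v'\ge v}\Xo_{v'}$, the lifting property shows that $\{v' : v'\ge v\}$ is a union of $\{v',v's\}$-pairs precisely when $vs>v$, and dually for $X^v=\coprod_{v'\le v}\Xo^{v'}$; for (b), given a Schubert cell of $G/R$, the two Schubert cells of $G/B$ lying over it both surject onto it, and likewise for opposite Schubert cells, while $\pi_R$ is proper.

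Now set $w'=w\circ s$. In every case $w's<w'$ (if $ws>w$ then $w'=ws$ and $w's=w$; if $ws<w$ then $w'=w$), so $X^{w'}$ is a union of $\pi_R$-fibers by (a), and $\pi_R(X^{w'})=\pi_R(X^w)$ by (b). Since $us>u$, fact (a) makes $X_u$ a union of $\pi_R$-fibers, and therefore $\pi_R(X_u^w)=\pi_R(X_u)\cap\pi_R(X^w)$: indeed any point of $\pi_R(X_u)\cap\pi_R(X^w)$ lifts to a point of $X^w$ lying in the $\pi_R$-fiber over it, and that whole fiber is contained in $X_u$. The same argument, using this time that $X^{w'}$ is a union of $\pi_R$-fibers, gives $\pi_R(X_{us}^{w'})=\pi_R(X_{us})\cap\pi_R(X^{w'})$. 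Finally $\pi_R(X_{us})=\pi_R(X_u)$ and $\pi_R(X^{w'})=\pi_R(X^w)$ by (b), so the two intersections agree, and composing with the projection $G/R\to G/P$ gives $\pi(X_u^w)=\pi(X_{us}^{w\circ s})$. The only real work is fact (a) — determining via the lifting property exactly which Schubert and opposite Schubert varieties of $G/B$ are pulled back from $G/R$ — together with the Bruhat-order bookkeeping, in particular the nonemptiness of the intervals $[us,w\circ s]$; this is routine but is the step needing care, and it parallels the fiber analysis in the proof of Lemma~\ref{L:WhyQ}.
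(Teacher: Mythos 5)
Your proposal is correct, and it follows the same skeleton as the paper's proof: reduce to peeling off one simple reflection $s\in W_P$ at a time (the paper likewise strips the first letter of a reduced word for $x$ and inducts), pass to the minimal parabolic $R$ with $B\subseteq R\subseteq P$, and exploit that $\pi_R$ is a $\PP^1$-bundle. The difference is in how the single-step identity $\pi_R(X_u^w)=\pi_R(X_{us}^{w\circ s})$ is verified. The paper fixes a point $z\in \pi_R(X_u)\cap\pi_R(X^w)$ and runs a case check (fifteen cases) on how $X_u$, $X_{us}$, $X^w$, $X^{w\circ s}$ can meet the fiber $\pi_R^{-1}(z)$, concluding that the fiber meets $X_u^w$ iff it meets $X_{us}^{w\circ s}$; this is self-contained and never needs to know that $[us,w\circ s]$ is nonempty. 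You instead isolate the structural facts that $X_u$ (resp.\ $X^{w\circ s}$) is a union of $\pi_R$-fibers when $us>u$ (resp.\ $(w\circ s)s<w\circ s$) and that $\pi_R(X_{vs})=\pi_R(X_v)$, $\pi_R(X^{vs})=\pi_R(X^v)$ for $vs>v$, and then write both projected Richardsons as the single intersection $\pi_R(X_u)\cap\pi_R(X^w)$. This is cleaner and makes the mechanism transparent (in effect it explains why the paper's case table closes up), at the modest cost of two extra inputs which you correctly supply or which the paper already records: the set-theoretic identity $X_a\cap X^b=\coprod_{a\le x\le y\le b}\Xo_x^y$ (stated in the introduction, and needed when you promote a point of $X_u\cap X^w$ to a point of $X_u^w$), and the Bruhat-order fact $us\le w\circ s$ via the lifting property, so that $X_{us}^{w\circ s}$ really is the Richardson variety you intersect against. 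Both versions rest on the same lifting-property bookkeeping, so I would regard yours as a legitimate, slightly tidier variant rather than a different method.
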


\begin{proof}
Our proof is by induction on the length of $x$; if $x=e$ then the
claim is trivial. If $x \neq e$ then $\ell(s_i x) <\ell(x)$ for some
$i \neq k$. We will show that $\pi(X_{u}^w) = \pi(X_{u s_i}^{w \circ
s_i})$, at which point we are done by induction. Define $R$ as in the proof of Lemma~\ref{L:WhyQ}. Once
again, it is enough to show that $\pi_R(X_u^w)= \pi_R(X_{u s_i}^{w
\circ s_i})$. Let $U = \pi_R(X_{u}) = \pi_R(X_{u s_i})$ and $V =
\pi_R(X^{w}) = \pi_R(X^{w s_i})$. Clearly, both $\pi_R(X_u^w)$ and $
\pi_R(X_{u s_i}^{w \circ s_i})$ lie in $U \cap V$. Let $z$ be a point
of $U \cap V$, so $\pi_R^{-1}(z) \cong \PP^1$. We will look at the
intersection of $\pi_R^{-1}(z)$ with $X_u$, $X_{u s_i}$, $X^w$ and
$X^{w s_i}$.

The pair $(X_u \cap \pi_R^{-1}(z), X_{u s_i} \cap \pi_R^{-1}(z))$ is
either $(\emptyset, \emptyset)$, $(\PP^1, \{ \mbox{pt} \})$ or
$(\PP^1, \PP^1)$. If $\ell(w s_i) > \ell(w)$ then $w \circ s_i = w
s_i$. In that case, the pair  $(X^{w s_i} \cap \pi_R^{-1}(z), X^w
\cap \pi_R^{-1}(z))$ is also limited to one of the three preceding
cases. If, on the other hand, $\ell(w s_i)<\ell(w)$ then $w \circ
s_i=w$ and $X^w \cap \phi^{-1}(z)$ is either $\emptyset$ or $\PP^1$.
Checking all $15$ cases, we see that, in each case, $\pi_R^{-1}(z)
\cap X_u^w$ is nonempty if and only if $\pi_R^{-1}(z) \cap X_{u
s_i}^{w \circ s_i}$ is.
\end{proof}

\begin{cor} \label{C:NotInj}
If $\pi_P$ is birational on $X_u^w$, then $u \leq_P w$.
\end{cor}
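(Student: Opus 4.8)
The plan is to combine Proposition~\ref{P:EquivalenceClassClosed} with an elementary dimension count, and then to feed the result into Lemma~\ref{L:lengthadd}. Let $x\in W_P$ be the element with $ux$ being $W_P$-maximal, so that $\pi(X_u^w)=\pi(X_{ux}^{w\circ x})$ and $ux\le w\circ x$ by Proposition~\ref{P:EquivalenceClassClosed}. Since $ux$ is $W_P$-maximal, the product $u\cdot x$ is length-additive, so $\ell(ux)=\ell(u)+\ell(x)$. Now $\dim X_u^w=\ell(w)-\ell(u)$, while $\dim\pi(X_{ux}^{w\circ x})\le\dim X_{ux}^{w\circ x}=\ell(w\circ x)-\ell(ux)=\ell(w\circ x)-\ell(u)-\ell(x)$. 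If $\pi_P$ is birational on $X_u^w$ then $\dim\pi(X_u^w)=\dim X_u^w$, and since $\pi(X_u^w)=\pi(X_{ux}^{w\circ x})$ this forces $\ell(w)-\ell(u)\le\ell(w\circ x)-\ell(u)-\ell(x)$, i.e.\ $\ell(w\circ x)\ge\ell(w)+\ell(x)$. The Demazure product always satisfies the reverse inequality, so in fact $w\circ x=wx$ and $w\cdot x$ is length-additive; in particular $ux\le wx$.

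Next I would apply Lemma~\ref{L:lengthadd}: from $u\le w$, $ux\le wx$, and $x\in W_P$ with both $u\cdot x$ and $w\cdot x$ length-additive, we conclude that $u\le_P w$ if and only if $ux\le_P wx$. It therefore remains to prove $ux\le_P wx$. For this I would use the Poincar\'e dual of Proposition~\ref{P:TopBottom}: \emph{if $a$ is the longest element of its coset $aW_P$ and $a\le b$, then $a\le_P b$.} To see this, let $w_0$ be the longest element of $W$ and let $\hat P$ be the parabolic with $W_{\hat P}=w_0W_Pw_0$. The anti-automorphism $v\mapsto vw_0$ reverses the Bruhat order, carries longest coset representatives of $W/W_P$ to shortest coset representatives of $W/W_{\hat P}$, and preserves the relation ``belong to different $W_P$-cosets''; hence it carries $P$-covers to $\hat P$-covers with direction reversed. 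Thus $a\le b$ gives $bw_0\le aw_0$ with $aw_0\in W^{\hat P}$, whence $bw_0\le_{\hat P}aw_0$ by Proposition~\ref{P:TopBottom}, and pulling back along $v\mapsto vw_0$ yields $a\le_P b$. Applying this with $a=ux$ (which is $W_P$-maximal) and $b=wx$ gives $ux\le_P wx$, and hence $u\le_P w$.

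I expect the dimension count to be routine once Proposition~\ref{P:EquivalenceClassClosed} is in hand; the one step requiring genuine care is the dual of Proposition~\ref{P:TopBottom} above, although it is short after the anti-automorphism $v\mapsto vw_0$ is set up. (Alternatively, the fact that a $W_P$-maximal element lies $\le_P$-below everything above it can be read off from the identification of $Q(W,W_P)$ with Rietsch's $Q^J$ of \cite{Rie}.) The conceptual content of the argument is that $\pi_P$ fails to be birational on $X_u^w$ precisely when passing to the $W_P$-maximal representative of $u$ replaces the honest product $wx$ by a strictly shorter Demazure product $w\circ x$, which is exactly the phenomenon the dimension count detects.
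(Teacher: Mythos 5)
Your argument is correct and is essentially the paper's: both rest on Proposition~\ref{P:EquivalenceClassClosed} together with the dimension comparison $\ell(w\circ x)-\ell(ux)\le \ell(w)-\ell(u)$, which the paper simply runs in contrapositive form (if $u\not\leq_P w$ then the parametrizing Richardson variety has strictly smaller dimension, so $\pi$ cannot be birational). The additional content you supply --- that equality of dimensions forces $wx$ to be length-additive and that this in turn forces $u\leq_P w$, via Lemma~\ref{L:lengthadd} together with your correctly proved $w_0$-dual of Proposition~\ref{P:TopBottom} --- is a sound filling-in of precisely the step that the paper's one-sentence proof leaves implicit.
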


\begin{proof}
If $u \not\leq_P w$ then Proposition~\ref{P:EquivalenceClassClosed} shows that $\pi(X_u^w)$ can be parametrized by $X_{u'}^{w'}$ where $\dim X_{u'}^{w'} < \dim X_u^w$. 
\end{proof}

So projecting Richardson varieties for non $P$-Bruhat intervals does not produce any new closed projected Richardson varieties.
Given a projected Richardson variety $\Pi$, we will call $X_u^w$ a \newword{Richardson model} for $\Pi$ if $\pi(X_u^w)=\Pi$ and $u \leq_P w$. 

\begin{remark} The same is not true for projected open Richardson varieties: $\pi(\Xo_u^w)$ is not always equal to $\pi(\Xo_{u'}^{w'})$ for some $u' \leq_P w'$. Consider the situation of the flag variety $\Fl_3$ of $SL(3)$ projecting to the Grassmannian $G(1,3) \cong \PP^2$. The image of the big Richardson cell is all of $\PP^2$ except for the two points $(1:0:0)$ and $(0:0:1)$; whereas $\Xo_{s_1}^{s_1 s_2 s_1}$ and $\Xo_{e}^{s_1 s_2}$ both map to the complement of the three boundary lines.
\end{remark}

For our purposes, Rietsch's closure result \cite{Rie} can be formulated as follows.

\begin{prop} \label{P:Rietsch}
Every point of $G/P$ is in $\Pio_u^w$ for a unique $(u,w)$ in $Q(W, W_P)$. 
If $u \leq_P w$, then $\Pi_u^w = \coprod_{u \leq u' \leq_P w' \leq w} \Pio_{u'}^{w'}$.
\end{prop}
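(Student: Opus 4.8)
The plan is to deduce the proposition from Rietsch's cell decomposition of $G/P$ \cite{Rie}, whose index set we have already matched with $Q(W,W_P)$ in the discussion following Proposition~\ref{P:TopBottom}. Rietsch writes $G/P$ as a disjoint union of locally closed pieces indexed by her set $Q^J$, and $Q^J = Q(W,W_P)$. The first task is to check that the piece of her decomposition attached to a class $(u,w)$ is precisely our $\Pio_u^w = \pi(\Xo_u^w)$. This is a matter of unwinding her definitions, using that $\pi(\Xo_u)$ and $\pi(\Xo^w)$ are the Schubert and opposite Schubert cells $\Yo_{\pi(u)}$ and $\Yo^{\pi(w)}$ in $G/P$, but keeping in mind that $\pi(\Xo_u^w)$ can be strictly smaller than $\Yo_{\pi u}\cap\Yo^{\pi w}$ — exactly the phenomenon recorded in the Remark after Corollary~\ref{C:ProjectionDimension}, and the reason one must work with images of Richardson varieties from $G/B$ rather than with intersections formed inside $G/P$. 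Granting this identification of strata, the partition statement, including uniqueness of the class $(u,w)$ containing a given point, is an immediate restatement of Rietsch's theorem.

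For the closure relation $\Pi_u^w = \coprod_{u \leq u' \leq_P w' \leq w}\Pio_{u'}^{w'}$ I would first dispatch the easy inclusion. Since $X_u^w = \coprod_{u\le x\le y\le w}\Xo_x^y$ (recalled in Section~\ref{sec:intro}), applying $\pi$ gives $\Pi_u^w = \bigcup_{u\le x\le y\le w}\pi(\Xo_x^y)$; and whenever $u \le u' \le_P w' \le w$ the pair $(u',w')$ satisfies $u\le u'\le w'\le w$, so $\Xo_{u'}^{w'}$ occurs among these cells and hence $\Pio_{u'}^{w'}\subseteq\Pi_u^w$. Because the sets $\Pio$ partition $G/P$ and $\Pi_u^w$ is closed, what remains is to show that no further class $(u',w')\in Q(W,W_P)$ has $\Pio_{u'}^{w'}$ meeting $\Pi_u^w$ unless $u\le u'\le_P w'\le w$; equivalently, that the cells $\pi(\Xo_x^y)$ coming from the non-$P$-Bruhat pairs $x\le y$ in the union above contribute nothing new. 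This reverse inclusion is the substance of Rietsch's closure theorem (see also Goodearl--Yakimov \cite{GY}), and I would cite it, having first reconciled her partial order on $Q^J$ with the one described here by the same unwinding of definitions used for the strata. One can also recover part of it directly: running the fibrewise analysis of the proof of Proposition~\ref{P:EquivalenceClassClosed} a little further, and tracking which cells the $\PP^1$-fibers actually hit rather than merely whether they are hit, identifies $\pi(\Xo_x^y)$ for a non-$P$-Bruhat pair as lying inside $\Pi_{u'}^{w'}$ for the $P$-Bruhat representative produced there, which has $u \le u' \le_P w' \le w$.

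The main obstacle is thus not any single computation but the bookkeeping of conventions: verifying that Rietsch's pieces are exactly the $\pi(\Xo_u^w)$ and that her order on $Q^J$ is the one phrased through $u \le u' \le_P w' \le w$, given that her framework is built from total positivity and from cell intersections taken inside $G/P$ rather than from images of Richardson varieties in $G/B$. Once that dictionary is fixed, both assertions are restatements of her results, and the only genuinely new input — the easy inclusion in the closure relation — is the short argument above together with Corollary~\ref{C:ProjectionDimension} for the dimension of $\Pi_u^w$.
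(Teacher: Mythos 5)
Your first statement and the easy inclusion $\Pi_u^w \supseteq \bigcup_{u \leq u' \leq_P w' \leq w} \Pio_{u'}^{w'}$ match the paper: the partition of $G/P$ by the $\Pio_u^w$, and the closure relation \emph{for the representative with $w \in W^P$}, are exactly what the authors quote from \cite{Rie}, and the easy inclusion is the same one-line argument from $\Xo_{u'}^{w'} \subset X_u^w$.

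The gap is in the hard inclusion for a \emph{general} representative $u \leq_P w$ (with $w$ not necessarily in $W^P$), which you dismiss as ``bookkeeping of conventions'' and ``reconciling her partial order with the one described here.'' The condition $u \leq u' \leq_P w' \leq w$ is phrased in terms of the chosen representatives, not of classes in $Q(W,W_P)$: if you replace $(u,w)$ by an equivalent pair, the set of pairs $(u',w')$ satisfying the sandwich condition changes. Rietsch's closure theorem (in her parametrization, i.e.\ essentially the case $w \in W^P$) therefore does not formally yield the statement for an arbitrary representative; what is needed is the assertion that whenever $(x,y)$ is the $W^P$-representative of the class of $(u,w)$ and $x \leq x' \leq_P y' \leq y$, the class of $(x',y')$ admits a representative $(u',w')$ with $u \leq u' \leq_P w' \leq w$. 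This is genuine combinatorial content, and it is where the paper's proof does its work: it inducts on length-additive right multiplication by a simple reflection $s \in W_P$, splitting into the cases $x's > x'$ (where Lemma~\ref{L:Pleftweak} gives $y's > y'$ and one takes $(u',w') = (x's, y's)$, using Lemma~\ref{L:lengthadd} and Lemma~\ref{L:WhyQ}) and $x's < x'$ (where the lifting property \cite[Proposition 5.9]{Hum} gives $xs \leq x'$ and one keeps $(u',w') = (x',y')$), with Lemma~\ref{L:Shift} supplying the reduction to the $W^P$-representative. Your proposal contains no substitute for this step; the suggestion to ``run the fibrewise analysis of Proposition~\ref{P:EquivalenceClassClosed} a little further'' addresses the different issue of non-$P$-Bruhat pairs and, as you concede, recovers only part of what is needed. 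To complete your argument you would have to either prove the representative-transfer statement above (essentially reproducing the paper's induction) or verify that Rietsch's order, unwound through arbitrary representatives rather than her canonical ones, literally takes the sandwiched form — which is the same lemma in disguise, not a matter of conventions.
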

\begin{proof}
The first statement, together with the second statement for $w \in W^P$, can be found in \cite{Rie}.  Now let $u \leq_P w$ be arbitrary.  If $u \leq u' \leq_P w' \leq w$, then $\Xo^{w'}_{u'} \subset X^w_u$, which gives the inclusion $\Pi_u^w \supseteq \bigcup_{u \leq u' \leq_P w' \leq w} \Pio_{u'}^{w'}$.  

For the other inclusion, using Rietsch's statement for $w \in W^P$ and Lemmata \ref{L:lengthadd}, \ref{L:WhyQ}, and \ref{L:Shift}, we may assume that the result holds for $x \leq_P y$ and prove it for $u \leq_P w$, where $u = xs$ and $w = ys$ are length-additive.  Suppose $x \leq x' \leq_P y' \leq y$.  If $x's > x'$, then $y's >y'$ as well by Lemma \ref{L:Pleftweak}, and we set $u' =x's$ and $w' = y's$.  If $x's < x'$, then by \cite[Proposition 5.9]{Hum} we must have $xs \leq x'$, and we set $u' = x'$ and $w' = y'$.  In both cases $u \leq u' \leq_P w' \leq w$ and $\Pio_{u'}^{w'} = \Pio_{x'}^{y'}$ by Lemma \ref{L:WhyQ}.  Since we have assumed that $\Pi_x^y \subseteq \bigcup_{x \leq x' \leq_P y' \leq y} \Pio_{x'}^{y'}$ it follows that
$\Pi_u^w \subseteq \bigcup_{u \leq u' \leq_P w' \leq w} \Pio_{u'}^{w'}$.
\end{proof}

So the $\Pi_u^w$, where $(u,w)$ ranges through $Q(W, W_P)$, form a stratification of $G/P$.  Proposition \ref{P:Rietsch} endows $Q(W,W_P)$ with the structure of a poset.

%
%
%
%
%
%
%
%
%

\section{Frobenius splitting of projected Richardson varieties}
\label{sec:frobsplit}

In this section we show that the partial flag varieties $G/P$ possess Frobenius
splittings which compatibly split all the projected Richardson varieties
there. We will later show that the projected Richardson varieties are the only compatibly split subvarieties for this splitting, generalizing a result of
\cite{Hague} in the $P=B$ case.

This result, and a related result
from \cite{BrionLakshmibai}, will allow us to prove that the map to a projected Richardson variety from its Richardson model is ``cohomologically trivial''.  Using the result (Theorem \ref{thm:RatlRes}), established in the Appendix, that Richardson varieties have rational resolutions, we obtain that projected Richardson varieties are normal, Cohen-Macaulay, and have rational resolutions.


We will not need to define (compatible) Frobenius splittings,
as everything we will need about them is contained in
the following lemma, all parts quoted from \cite{BK}.

\newcommand\LL{{\mathcal L}} 

\begin{lem}\label{lem:splittings} \
  \begin{enumerate}
  \item If $X$ is Frobenius split, it is reduced.
  \item If $X_1,X_2$ are compatibly split subvarieties then $X_1 \cup X_2$,
    $X_1 \cap X_2$, and their components are also compatibly split in $X$.
  \item If $f:X\to Y$ is a morphism such that
    the map $f^{\#}: \O_Y \to f_* \O_X$ is an isomorphism and
    $X$ is Frobenius split, then $f$ induces a natural splitting on $Y$.

    Moreover, if $X'\subseteq X$ is compatibly split,
    then the splitting on $Y$ compatibly splits $f(X')$.

    \item There is a Frobenius splitting of $G/B$, for which all Richardson varieties are compatibly split.
  \item If $X$ is Frobenius split and proper and $\LL$ is ample on $X$,
    then $H^i(X; \LL) = 0$ for $i>0$.
  \end{enumerate}
\end{lem}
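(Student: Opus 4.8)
Since the lemma's five parts are all quoted from the monograph \cite{BK}, the ``proof'' I would write is really a list of pointers to the relevant sections there; but let me indicate the ideas and flag which part carries the real content. Throughout, recall that a Frobenius splitting of a scheme $X$ in characteristic $p>0$ is an $\O_X$-linear map $\phi\colon F_*\O_X\to\O_X$ splitting the Frobenius map (the $p$-th power map) $\O_X\to F_*\O_X$, and that $\phi$ compatibly splits a closed subscheme $Y$ if $\phi(F_*\I_Y)\subseteq\I_Y$.

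For (1), I would note that a splitting makes $\O_X\to F_*\O_X$, and hence every iterate $\O_X\to F^k_*\O_X$, split injective; if $f$ is a local section with $f^n=0$, then $f^{p^k}=0$ for $p^k\ge n$, forcing $f=0$, so $X$ is reduced. For (2), the hypothesis is that $\phi$ stabilizes $\I_{X_1}$ and $\I_{X_2}$, hence also $\I_{X_1}+\I_{X_2}$ and $\I_{X_1}\cap\I_{X_2}$, which cut out $X_1\cap X_2$ and $X_1\cup X_2$ (reduced by (1)); the assertion about irreducible components is the one mildly delicate point, handled in \cite{BK} by localizing at the generic point of a component together with a prime-avoidance argument. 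For (3), I would push $\phi$ forward: the absolute Frobenius commutes with $f_*$, and $f_*\O_X=\O_Y$ by hypothesis, so $f_*\phi\colon F_*\O_Y\to\O_Y$ restricts to the identity on $\O_Y$ and is thus a splitting of $Y$; compatibility with $f(X')$ then follows by tracing $\I_{f(X')}=\ker(\O_Y\to f_*\O_{X'})$ through $f_*\phi$ and using that $\phi$ splits $X'$ compatibly. For (5), from $F^*\LL\cong\LL^{\otimes p}$ and the projection formula one exhibits $\LL$ as an $\O_X$-module direct summand of $F_*(\LL^{\otimes p})$, so $H^i(X;\LL)\hookrightarrow H^i(X;\LL^{\otimes p})$; iterating gives $H^i(X;\LL)\hookrightarrow H^i(X;\LL^{\otimes p^k})$ for every $k$, and when $\LL$ is ample and $X$ is proper the right-hand side vanishes for $k\gg 0$ and $i>0$ by Serre's theorem.

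The substantive statement, and the one I would actually have to work for, is (4); this is also where the main obstacle lies. The plan is to use the duality identification of $\mathrm{Hom}_{\O_X}(F_*\O_X,\O_X)$ with $F_*\omega_X^{1-p}$ to reduce the construction of a splitting compatible with a reduced divisor $D$ to exhibiting a section of $\omega_X^{1-p}$ that vanishes to order exactly $p-1$ along $D$ and has the correct non-vanishing behaviour at a smooth point of $D$ --- the criterion of \cite{BK}. For $X=G/B$ one lets $D$ be the union of all the Schubert divisors $\partial_i$ and all the opposite Schubert divisors $\partial^i$ (their total class is anticanonical, as one already sees on $\PP^1=SL_2/B$), and takes the section to be the $(p-1)$-st power of the product of the canonical defining sections of the $\O(\partial_i)$ and $\O(\partial^i)$; this section lies in $H^0(G/B;\omega_{G/B}^{1-p})$ and meets the criterion, yielding a splitting of $G/B$ that compatibly splits every $\partial_i$ and every $\partial^i$. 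Induction on codimension together with part (2) then promotes this to compatible splitting of every Schubert variety and every opposite Schubert variety, and one last application of (2) to $X_u^w=X^w\cap X_u$ finishes the proof. The genuine difficulty --- that the iterated intersections stay reduced and of the expected codimension, and that the section vanishes to precisely the prescribed order along each component of $D$ --- is exactly the Mehta--Ramanathan and Ramanan--Ramanathan input packaged in \cite{BK}, which is why in practice I would simply cite it.
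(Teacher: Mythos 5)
Your proposal is correct and takes essentially the same approach as the paper, whose ``proof'' is simply the citation of the five statements to \cite{BK} (Proposition 1.2.1, Lemma 1.1.8, Theorem 2.3.1, and Theorem 1.2.8(1)); your sketches of (1), (2), (3) and (5) are the standard arguments found there. One caution about your sketch of (4): the splitting criterion in \cite[Proposition 1.3.11]{BK} requires $\dim G/B=\ell(w_0)$ prime divisors in the zero divisor of $\sigma$ meeting transversally at a single smooth point, whereas $\sum_i\partial_i+\sum_i\partial^i$ has only $2\,\mathrm{rk}(G)$ components, so the criterion cannot be applied directly on $G/B$ in general --- the proof of \cite[Theorem 2.3.1]{BK} instead splits a Bott--Samelson resolution (whose boundary does consist of $\ell(w_0)$ divisors crossing transversally at a point) and pushes the splitting forward as in your part (3) --- but since you explicitly defer to \cite{BK} for exactly this verification, this is an imprecision in the sketch rather than a gap in the proof as submitted.
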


\begin{proof}
  The first two are Proposition 1.2.1,
  the third is Lemma 1.1.8,
  the fourth is Theorem 2.3.1,
  and the fifth is part (1) of Theorem 1.2.8,
  all from \cite{BK}.
\end{proof}

\begin{remark}
If $f: X \to Y$ is a projective and surjective map of reduced and
irreducible varieties with connected fibers, and $Y$ is normal, then
$f^{\#}: \O_Y \to f_* \O_X$ is an isomorphism (see
\cite[p.125]{Laz}).
\end{remark}

\begin{cor}\label{cor:pRsplit}
  There is a Frobenius splitting on $G/P$ that compatibly
  splits all the projected Richardson varieties therein.
\end{cor}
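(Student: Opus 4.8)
The plan is to deduce Corollary~\ref{cor:pRsplit} from Lemma~\ref{lem:splittings}(4) by pushing the standard splitting on $G/B$ forward along $\pi$, using part~(3) of that lemma. First I would invoke Lemma~\ref{lem:splittings}(4) to obtain a Frobenius splitting of $G/B$ compatibly splitting all Richardson varieties $X_u^w$. Next I would apply Lemma~\ref{lem:splittings}(3) to the map $\pi \colon G/B \to G/P$: since $\pi$ is a (locally trivial) fiber bundle with fiber $P/B$, a projective rational homogeneous variety, the fibers are connected and $G/P$ is smooth, hence normal, so by the Remark following the lemma the natural map $\pi^{\#} \colon \O_{G/P} \to \pi_* \O_{G/B}$ is an isomorphism. (Alternatively this is standard for $\pi$: $\pi_*\O_{G/B} = \O_{G/P}$ because $H^0(P/B;\O) = k$ and higher direct images vanish.) Therefore $\pi$ induces a Frobenius splitting on $G/P$.

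The second half of part~(3) then does the work: for each compatibly split subvariety $X' \subseteq G/B$, the induced splitting on $G/P$ compatibly splits $\pi(X')$. Taking $X' = X_u^w$ as $u \le_P w$ ranges over all $P$-Bruhat intervals, we conclude that all the projected Richardson varieties $\Pi_u^w = \pi(X_u^w)$ are compatibly split under this splitting of $G/P$. By Proposition~\ref{P:EquivalenceClassClosed} (or Corollary~\ref{C:NotInj}) it does no harm to also take $u \le w$ not a $P$-Bruhat interval, since $\pi(X_u^w)$ is again a projected Richardson variety already on the list; so this accounts for \emph{all} projected Richardson varieties in $G/P$. This gives exactly the statement of the corollary.

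There is essentially no obstacle here — the corollary is a formal consequence of the machinery assembled in Lemma~\ref{lem:splittings}. The only point requiring a word of justification is that $\pi^{\#}$ is an isomorphism, i.e. that $\pi$ satisfies the hypothesis of Lemma~\ref{lem:splittings}(3); but this is covered verbatim by the Remark (projective surjective with connected fibers onto a normal target), applied to $\pi \colon G/B \to G/P$. (Equivalently, and without even citing the Remark, one can note that the standard splitting of $G/B$ from Lemma~\ref{lem:splittings}(4) is itself obtained by pulling back, so its compatibility with the fibers $P/B$ of $\pi$ is automatic; but the cleanest route is the one above.) The genuinely substantive results about projected Richardson varieties — that these are the \emph{only} compatibly split subvarieties, and the consequences for normality, Cohen--Macaulayness, and rational resolutions — come later and are not needed for this corollary.
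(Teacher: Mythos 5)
Your proposal is correct and is essentially the paper's own argument: invoke Lemma~\ref{lem:splittings}(4) for the splitting of $G/B$ compatible with all Richardson varieties, note that $\pi\colon G/B \to G/P$ satisfies the hypothesis of Lemma~\ref{lem:splittings}(3) (via the Remark on projective surjective maps with connected fibers onto a normal target), and push the splitting forward so that each $\pi(X_u^w)$ is compatibly split. The extra remark about non-$P$-Bruhat intervals is harmless but unnecessary, since projected Richardson varieties are by definition the images $\pi(X_u^w)$ for $u \le_P w$.
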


\begin{proof}
  The map $\pi : G/B \to G/P$ satisfies the hypothesis of (3).
  By parts (4) and (3) of the lemma, $G/P$ acquires a
  splitting that compatibly splits all projected Richardson varieties.
\end{proof}

\begin{cor} \label{C:reducedintersection}
  If $A$ and $B$ are unions of projected Richardson varieties, 
  then $A \cap B$ is reduced.
\end{cor}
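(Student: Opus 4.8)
The plan is to deduce this immediately from the Frobenius-splitting machinery already assembled, namely Corollary~\ref{cor:pRsplit} together with Lemma~\ref{lem:splittings}; no new geometric input should be needed. The only thing to watch is that $A \cap B$ is to be read as the \emph{scheme-theoretic} intersection, since the content of the statement is precisely that this scheme has no nilpotents.

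First I would observe that there are only finitely many projected Richardson varieties in $G/P$ --- by the discussion in \S\ref{sec:proj} they are indexed by the finite set $Q(W,W_P)$ --- so both $A$ and $B$ are \emph{finite} unions of projected Richardson varieties. This matters only because part (2) of Lemma~\ref{lem:splittings} is phrased for finite unions. Now fix the Frobenius splitting of $G/P$ supplied by Corollary~\ref{cor:pRsplit}, which compatibly splits every projected Richardson variety. By Lemma~\ref{lem:splittings}(2) the collection of compatibly split subvarieties is closed under finite unions, so $A$ and $B$ are each compatibly split in $G/P$. Applying Lemma~\ref{lem:splittings}(2) once more to the pair $A$, $B$, the scheme-theoretic intersection $A \cap B$ is compatibly split as well.

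Finally, a compatibly split closed subscheme of a Frobenius split variety carries a Frobenius splitting in its own right (the splitting of the ambient space restricts to it), so $A \cap B$ is Frobenius split, hence reduced by Lemma~\ref{lem:splittings}(1). That finishes the argument.

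As for the main obstacle: there is essentially none of substance here --- this is a formal consequence of the two cited results. The only points deserving a word of care are (i) keeping track that the intersection in question is the scheme-theoretic one, so that ``reduced'' is an actual assertion and not a tautology, and (ii) making explicit, if desired, that ``compatibly split subvariety'' means exactly that the subscheme acquires its own Frobenius splitting, which is what licenses the appeal to Lemma~\ref{lem:splittings}(1). Note also that we are \emph{not} claiming $A \cap B$ is again a union of projected Richardson varieties --- only that it is reduced.
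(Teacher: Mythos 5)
Your argument is correct and is exactly the route the paper intends: the corollary is an immediate consequence of Corollary~\ref{cor:pRsplit} together with parts (1) and (2) of Lemma~\ref{lem:splittings}, which is why the paper states it without a separate proof. Your extra care about finiteness of $Q(W,W_P)$ and about the intersection being scheme-theoretic is exactly the right bookkeeping, but introduces nothing beyond the paper's implicit argument.
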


By Proposition \ref{P:Rietsch}, this intersection is set-theoretically a union of projected Richardson varieties. 
So, combining Corollary~\ref{C:reducedintersection} and Rietsch's result, this intersection is a reduced union of projected Richardson varieties. 

\begin{theorem} \label{T:crepant}
  Let $\Pi_u^w$ be a projected Richardson variety and $X_u^w$ a Richardson model.
  Then the map $\pi : X_u^w \onto \Pi_u^w$ is \defn{cohomologically trivial},
  i.e. $\pi_* \O_{X_u^w} = \O_{\Pi_u^w}$ and $R^i \pi_* \O_{X_u^w} = 0$ for $i>0$.

  Also, for any ample line bundle $L$ on $G/P$, we have $H^0(\Pi_u^w; L) \iso H^0(X_u^w; \pi^*L)$,
  and $H^i(\Pi_u^w; L) \iso H^i(X_u^w; \pi^* L) = 0$ for $i>0$.
\end{theorem}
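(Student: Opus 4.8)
The plan is to deduce cohomological triviality of $\pi : X_u^w \onto \Pi_u^w$ from the Frobenius-splitting machinery assembled in Lemma~\ref{lem:splittings}, together with normality of $\Pi_u^w$. The first ingredient I would establish is that $\Pi_u^w$ is normal. Since $X_u^w$ is a Richardson variety, it is normal (a standard fact, also following from Lemma~\ref{lem:splittings}(4) and the normality of Frobenius split Schubert and opposite Schubert varieties, intersected transversally). Now $\pi : X_u^w \to \Pi_u^w$ is projective, surjective, and has connected fibers — indeed by Lemma~\ref{L:WhyQ} the generic fiber is a single (reduced) point, so $\pi$ is birational. One then invokes Stein factorization: $\pi$ factors as $X_u^w \to \widetilde{\Pi} \to \Pi_u^w$ with the first map having connected fibers and $\widetilde{\Pi} \to \Pi_u^w$ finite; birationality forces $\widetilde{\Pi} \to \Pi_u^w$ to be birational and finite. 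To conclude $\widetilde\Pi = \Pi_u^w$, i.e. normality of $\Pi_u^w$, I would instead argue directly: by Lemma~\ref{lem:splittings}(3), $\pi$ induces a Frobenius splitting of $\Pi_u^w$; a Frobenius split variety which is the birational image of a normal variety under a projective map is itself normal (this is a standard consequence — one checks $\O_{\Pi_u^w} \to \pi_* \O_{X_u^w}$ is an isomorphism of sheaves using that the target is $S_2$ and the map is an iso in codimension zero, but in fact the cleanest route is to cite that Frobenius split + the existence of the partial normalization map being split forces normality, cf. \cite{BK}). Granting this, the hypotheses of Lemma~\ref{lem:splittings}(3) — that $\pi^\#: \O_{\Pi_u^w} \to \pi_* \O_{X_u^w}$ is an isomorphism — are met by the Remark following Lemma~\ref{lem:splittings}, since $\pi$ is projective, surjective, with connected fibers and normal target.

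Next I would handle the higher direct images. The statement $R^i\pi_*\O_{X_u^w} = 0$ for $i > 0$ is the crux and is \emph{not} formal from splitting alone. Here I would appeal to the cohomology-vanishing input the authors have flagged: the result of \cite{LL} (Lauritzen–Thomsen, or the analogue cited as \cite{LL}) on vanishing of higher cohomology for the relevant line bundles, combined with the general principle that for a proper morphism $f : X \to Y$ with $f_*\O_X = \O_Y$, one has $R^if_*\O_X = 0$ for all $i>0$ provided $H^i(X; f^*L) \to H^i(Y; L)$-type statements hold for $L$ sufficiently ample on $Y$ (Leray spectral sequence plus Serre vanishing on $Y$). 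Concretely: fix $L$ ample on $G/P$, restrict to $\Pi_u^w$. By Lemma~\ref{lem:splittings}(5) applied to the Frobenius split proper variety $X_u^w$ with the ample line bundle $\pi^*L|_{X_u^w}$ — note $\pi^*L$ is the pullback of an ample bundle along a birational projective morphism, hence big and nef but not ample, so one must instead use $L' \otimes \pi^*L$ for $L'$ ample on $X_u^w$, or better, use that $X_u^w$ is a Richardson variety in $G/B$ and $\pi^*L$ extends to a line bundle on $G/B$ that is a nonnegative combination of fundamental weights, to which the Frobenius-split vanishing \cite{BK} or \cite{LL} applies directly — we get $H^i(X_u^w; \pi^*L) = 0$ for $i>0$. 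The Leray spectral sequence $H^p(\Pi_u^w; R^q\pi_* \O_{X_u^w} \otimes L) \Rightarrow H^{p+q}(X_u^w; \pi^*L)$, together with Serre vanishing making $H^p(\Pi_u^w; \mathcal F \otimes L^{\otimes m})=0$ for $p>0$ and $m\gg 0$, forces $H^0(\Pi_u^w; R^q\pi_*\O_{X_u^w} \otimes L^{\otimes m}) = 0$ for all $q>0$ and $m \gg 0$; since $R^q\pi_*\O_{X_u^w}$ is coherent and $L$ ample, this gives $R^q\pi_*\O_{X_u^w} = 0$ for $q>0$, as desired.

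Finally, with $\pi_*\O_{X_u^w} = \O_{\Pi_u^w}$ and $R^i\pi_*\O_{X_u^w} = 0$ for $i>0$ in hand, the cohomology comparison follows immediately: the projection formula gives $R\pi_*(\pi^*L) \cong L \otimes R\pi_*\O_{X_u^w} \cong L$ in the derived category, so $H^i(\Pi_u^w; L) \cong H^i(X_u^w; \pi^*L)$ for all $i$. The vanishing $H^i(X_u^w; \pi^*L) = 0$ for $i>0$ when $L$ is ample on $G/P$ is then exactly the Frobenius-split vanishing statement invoked above (Lemma~\ref{lem:splittings}(5), applied in $G/B$ to the nef big line bundle $\pi^* L$, which still has vanishing higher cohomology on the compatibly split $X_u^w$ by the stronger form of \cite{BK}'s vanishing theorem, or directly via \cite{LL}), and $H^0(\Pi_u^w; L) \iso H^0(X_u^w; \pi^*L)$ is the $i=0$ case of the comparison.

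The main obstacle I anticipate is the $R^i\pi_* = 0$ vanishing. Frobenius splitting hands us $\pi_*\O = \O$ (via normality) essentially for free, but the higher direct image vanishing genuinely requires a cohomology-vanishing theorem for the big-and-nef — not ample — line bundle $\pi^*L$ on the Richardson variety $X_u^w$; this is where the paper's specific appeal to \cite{LL} (rather than the textbook Frobenius-split ampleness vanishing) is needed, and getting the precise line bundle positivity hypotheses of that reference to match our situation is the delicate point.
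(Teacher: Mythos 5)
Your proposal has a genuine gap at its central step, and it is a circularity rather than a technicality. You establish $\pi_*\O_{X_u^w}=\O_{\Pi_u^w}$ by first asserting that $\Pi_u^w$ is normal and then invoking the Remark after Lemma~\ref{lem:splittings}. But in this paper normality of projected Richardson varieties is Corollary~\ref{C:Normal}, which is \emph{deduced from} Theorem~\ref{T:crepant}: given that $X_u^w$ is normal, the statement $\O_{\Pi_u^w}\to\pi_*\O_{X_u^w}$ being an isomorphism is essentially equivalent to normality of $\Pi_u^w$, so assuming the latter assumes the crux. Moreover, the general principle you offer in its support --- that a Frobenius split variety which is the projective birational image of a normal variety is normal --- is false. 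The paper itself supplies a counterexample: a nodal (elliptic) curve is compatibly split in $\PP^2$, hence Frobenius split, and it is the projective birational image of its normalization $\PP^1$, yet it is not normal. Your fallback, ``iso in codimension zero plus $S_2$ of the target,'' does not work either: being an isomorphism at the generic point is far too weak (for a curve the non-normal locus already has codimension one), and $S_2$ of $\pi_*\O_{X_u^w}$ says nothing about the subsheaf $\O_{\Pi_u^w}$. So as written, the first identity $\pi_*\O_{X_u^w}=\O_{\Pi_u^w}$, and with it your projection-formula derivation of $H^0(\Pi_u^w;L)\iso H^0(X_u^w;\pi^*L)$, is unsupported.

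The repair is to prove $\pi_*\O_{X_u^w}=\O_{\Pi_u^w}$ without normality, which is what the paper does: for every $N$, the restriction $H^0(G/P;L^{\otimes N})\to H^0(X_u^w;\pi^*L^{\otimes N})$ is surjective, because $H^0(G/P;L^{\otimes N})\to H^0(G/B;\pi^*L^{\otimes N})$ is an isomorphism (Borel--Weil) and $H^i(G/B;\pi^*L^{\otimes N})\to H^i(X_u^w;\pi^*L^{\otimes N})$ is surjective by \cite[Proposition 1]{BrionLakshmibai}; combined with injectivity of $H^0(\Pi_u^w;L^{\otimes N})\to H^0(X_u^w;\pi^*L^{\otimes N})$ (as $\pi$ is surjective onto the reduced $\Pi_u^w$), this gives the $H^0$ isomorphism for all $N$, and then the cokernel $K$ of $\O_{\Pi_u^w}\hookrightarrow\pi_*\O_{X_u^w}$ satisfies $H^0(K\otimes L^{\otimes N})=0$ for $N\gg0$, forcing $K=0$. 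Note that the same citation also supplies, for $i>0$, the vanishing $H^i(X_u^w;\pi^*L^{\otimes N})=0$ that you correctly flag as the delicate input (since $\pi^*L$ is only nef on $X_u^w$, the ampleness vanishing of Lemma~\ref{lem:splittings}(5) does not apply directly); with that in hand, your Leray-plus-Serre-vanishing argument for $R^{i}\pi_*\O_{X_u^w}=0$, $i>0$, and your projection-formula comparison of cohomologies are sound and essentially coincide with the paper's.
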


Shrawan Kumar has remarked that Theorem \ref{T:crepant} also follows from Lemmas 3.3.2 and 3.3.3 in \cite{BK}, which have a similar proof.

\begin{proof}
  For any ample line bundle $L$, the commuting square below on the left induces by functorality the one on the right:
  $$
  \begin{array}{ccccc}
    X_u^w & \longrightarrow & \Pi_u^w \\
    \downarrow  & & \downarrow \\
    G/B & \longrightarrow & G/P
  \end{array}
  \qquad\qquad\qquad
  \begin{array}{ccccc}
   H^i(X_u^w; \pi^* L) & \longleftarrow & H^i(\Pi_u^w; L) \\
   \uparrow  & & \uparrow \\
   H^i(G/B; \pi^* L) & \longleftarrow & H^i(G/P; L)
  \end{array}
  $$
  By Borel-Weil, we know the bottom cohomology map is an isomorphism,
  and both sides are zero for $i>0$. By \cite[Proposition 1]{BrionLakshmibai},
  the left cohomology map is a surjection. Hence the composite map
  $H^i(G/P; L) \to    H^i(X_u^w; \pi^* L)$ is a
surjection whose image is zero if $i>0$. The top cohomology map
$H^i(\Pi_u^w; L) \to  H^i(X_u^w; \pi^* L)$ is then also
  is a surjection whose image is zero for $i>0$.

  If $i=0$, this top map is injective as well,
  since $X_u^w \to \Pi_u^w$ is a surjection.
  This proves the claim that $H^0(\Pi_u^w; L) \iso H^0(X_u^w; \pi^* L)$.
  We now establish the other parts of the result.
  
  Now, let $K$ be the cokernel of $0 \to \O_{\Pi_u^w} \to \pi_* \O_{X_u^w}$.
  For $N$ sufficiently large, the sequence
  $$0 \to H^0(\O_{\Pi_u^w} \otimes L^{\otimes N}) \to H^0(\pi_* \O_{X_u^w} \otimes L^{\otimes N}) \to H^0(K \otimes L^{\otimes N}) \to 0$$
  is exact. (Here all sheaves live on $G/P$.)
  But, by \cite[Ex.~II.5.1(d)]{Har}, the middle term is $H^0(X_u^w; \pi^*L^{\otimes N})$ and we just showed that $H^0(\Pi_u^w; L^{\otimes N}) \to  H^0(X_u^w; \pi^* L^{\otimes N})$ is an isomorphism.
  So $H^0(K \otimes L^{\otimes N})=0$ for all sufficiently large $N$, and we deduce that $K$ is the zero sheaf.

 Now consider the case that $i>0$.
  Consider the Leray spectral sequence for $\pi$ and $\O_{X_u^w}  \otimes \pi^* L^{\otimes N}$.
  The $E_2$ term is $H^p((R^q \pi_*) (\O_{X_v^w} \otimes \pi^* L^{\otimes N}), G/P)$, which, by \cite[Ex. III.8.3]{Har} is $H^p((R^q \pi_*) (\O_{X_u^w}) \otimes L^{\otimes N}, G/P)$.
  We take $N$ sufficiently large that this vanishes except when $p=0$.
  So we deduce that, for $N$ sufficiently large, $H^i(\O_{X_u^w}  \otimes \pi^*L^{\otimes N}, G/B) \cong H^0((R^i \pi_*)(\O_{X_u^w}) \otimes L^{\otimes N}, G/P)$.
As we observed in the first paragraph, the left hand side is zero.
So $H^0((R^i \pi_*)(\O_{X_u^w}) \otimes L^{\otimes N}, G/P)$ vanishes
for $N$ sufficiently large. But this means that $(R^i
\pi_*)(\O_{X_u^w})$ is zero, as desired.

Finally, to see that $H^i(\Pi_u^w; L) = 0$, use part (5) of Lemma \ref{lem:splittings}.
\end{proof}

Theorem~\ref{T:crepant} has many consequences for the geometry of projected Richardson varieties, as we now describe.

\begin{cor}
Let $\Pi$ be a projected Richardson variety and $X_u^w$ a Richardson model for
$\Pi$. Then the fibers of $\pi : X_u^w \to \Pi_u^w$ are connected.
\end{cor}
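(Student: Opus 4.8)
The plan is to read the corollary off from the equality $\pi_* \O_{X_u^w} = \O_{\Pi_u^w}$ proved in Theorem~\ref{T:crepant}, using properness of $\pi$ together with Zariski's connectedness theorem. All of the real content is already in Theorem~\ref{T:crepant}; what remains is a formal deduction.

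First I would observe that $\pi : X_u^w \to \Pi_u^w$ is a projective (in particular proper) morphism: it is the restriction of the projective map $G/B \to G/P$ to the closed subvariety $X_u^w$, with image the closed subvariety $\Pi_u^w$. Next I would invoke the Stein factorization of $\pi$: since $\pi$ is proper, it factors as $X_u^w \to Z \to \Pi_u^w$, where $Z = \Spec_{\Pi_u^w}(\pi_* \O_{X_u^w})$, the first map has connected fibers, and the second map is finite. By Theorem~\ref{T:crepant} we have $\pi_* \O_{X_u^w} = \O_{\Pi_u^w}$, so $Z = \Pi_u^w$ and the finite map $Z \to \Pi_u^w$ is the identity; hence $\pi$ itself has connected fibers. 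Equivalently, one may cite \cite[Cor.~III.11.3]{Har} directly, which states that a projective morphism $f : X \to Y$ of noetherian schemes with $f_* \O_X = \O_Y$ has connected fibers.

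There is no real obstacle here: the one thing to check beyond Theorem~\ref{T:crepant} — that $\pi$ is proper — is immediate, and the rest is the standard consequence of the theorem on formal functions. If one wished to avoid quoting Stein factorization, one could argue directly that a disconnected fiber $\pi^{-1}(y)$ would force the fiber of $\pi_* \O_{X_u^w}$ at $y$ to contain the ($\geq 2$-dimensional) space of locally constant functions on $\pi^{-1}(y)$, contradicting $\pi_* \O_{X_u^w} = \O_{\Pi_u^w}$; making this precise is exactly the proof of \cite[Cor.~III.11.3]{Har}.
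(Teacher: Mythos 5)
Your proposal is correct and matches the paper's approach: the paper's entire proof is a citation to Hartshorne (Cor.~III.11.4), deducing connectedness of fibers from the equality $\pi_* \O_{X_u^w} = \O_{\Pi_u^w}$ of Theorem~\ref{T:crepant} together with properness of $\pi$. Your spelled-out Stein factorization argument (equivalently \cite[Cor.~III.11.3]{Har}) is exactly this deduction made explicit.
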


\begin{proof} \label{c:ConnectedFibers}
 See \cite[Corollary~III.11.4]{Har}.
\end{proof}


\begin{cor} \label{C:Normal}
Projected Richardson varieties are normal.
\end{cor}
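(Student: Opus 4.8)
The plan is to deduce normality from Theorem \ref{T:crepant} together with the fact that Richardson varieties are normal. First I would recall that Richardson varieties $X_u^w$ in $G/B$ are normal: this is classical (they are reduced by Frobenius splitting, and normality follows e.g.\ from the fact that $X_w$ and $X^u$ are normal and their scheme-theoretic intersection is reduced by part (2) of Lemma \ref{lem:splittings}, combined with the standard argument that an intersection of normal Frobenius-split subvarieties meeting properly is normal — or simply cite this from \cite{BK}). Fix a projected Richardson variety $\Pi = \Pi_u^w$ and a Richardson model $X_u^w$, so that by construction $\pi : X_u^w \onto \Pi_u^w$ is a projective surjective morphism of irreducible varieties.

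The key step is the pushforward statement from Theorem \ref{T:crepant}: $\pi_* \O_{X_u^w} = \O_{\Pi_u^w}$. Now let $\nu : \widetilde{\Pi} \to \Pi$ be the normalization. Since $X_u^w$ is normal and $\pi$ factors (by the universal property of normalization, after noting $\pi$ is dominant and $X_u^w$ normal) through a morphism $\tilde\pi : X_u^w \to \widetilde\Pi$ with $\pi = \nu \circ \tilde\pi$, pushing forward structure sheaves gives $\O_\Pi \subseteq \nu_* \O_{\widetilde\Pi} \subseteq \nu_* \tilde\pi_* \O_{X_u^w} = \pi_* \O_{X_u^w} = \O_\Pi$. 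Hence $\nu_* \O_{\widetilde\Pi} = \O_\Pi$, and since $\nu$ is finite and birational this forces $\nu$ to be an isomorphism, i.e.\ $\Pi$ is normal. (Equivalently: $\Pi$ is the Stein factorization of $\pi$ through a normal variety, hence normal.)

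The main obstacle — really the only one — is justifying that $\pi$ lifts to the normalization, i.e.\ that we may legitimately form $\tilde\pi$. This is where normality of the source $X_u^w$ is essential: a dominant morphism from a normal variety to $\Pi$ factors uniquely through $\widetilde\Pi \to \Pi$ because $\nu$ is finite birational and $\O_{X_u^w}$ is integrally closed, so every rational function on $\Pi$ that is integral over $\O_\Pi$ pulls back to a regular function on $X_u^w$; this lets one construct the ring map $\O_{\widetilde\Pi} \to \tilde\pi_* \O_{X_u^w}$ sheaf-locally. Once that lift exists, the chain of inclusions above is routine, and reducedness of $\Pi$ (needed to even speak of the normalization sensibly, and guaranteed by Corollary \ref{cor:pRsplit} via part (1) of Lemma \ref{lem:splittings}) completes the argument.
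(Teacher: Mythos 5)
Your proposal is correct and follows essentially the same route as the paper: both rest on normality of the Richardson model $X_u^w$ together with the identity $\pi_*\O_{X_u^w}=\O_{\Pi_u^w}$ from Theorem \ref{T:crepant}, and then conclude that $\Pi_u^w$ is normal. The only (cosmetic) difference is in the last step: you invoke the universal property of the normalization $\nu:\widetilde\Pi\to\Pi$ and the sandwich $\O_\Pi\subseteq\nu_*\O_{\widetilde\Pi}\subseteq\pi_*\O_{X_u^w}=\O_\Pi$, while the paper verifies the same general lemma directly by a local integral-dependence argument; both are standard and equally valid.
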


\begin{proof}
Let $\Pi$ be a projected Richardson variety and $X$ its Richardson model. By \cite{Bripos}, $X$ is normal. We
establish, more generally, that if $X$ is a normal variety and $\pi
: X \to \Pi$ a morphism such that $\O_{\Pi} \to \pi_* \O_X$ is an
isomorphism, then $\Pi$ is normal.

Normality is a local condition, so we may assume that $\Pi = \Spec A$
for $A$ some integral domain. Let $K$ be the fraction field of $A$
and let $x \in K$ be integral over $A$, obeying the equation $x^n =
\sum_{i=0}^{n-1} a_i x^i$ for some $a_0$, $a_1$, \dots, $a_{n-1} \in
A$. Then we also have the relation $\pi^*(x)^n = \sum_{i=0}^{n-1}
\pi^*(a_i) \pi^*(x)^i$ in $\O_X(X)$. So, since $X$ is normal,
$\pi^*(x)$ is in $\O_{\Pi}(\Pi)=\left( \pi_* \O_{X} \right)(\Pi)$. But, by
hypothesis, $\left( \pi_* \O_{X} \right)(\Pi)=\O_{\Pi}(\Pi)$, so $x$ is in
$\O_{\Pi}(\Pi)$. Thus, we see that $\Pi$ is normal.
\end{proof}

In characteristic zero, one defines a variety $V$ to have \defn{rational singularities} if there is
a smooth variety $W$ and a proper birational map $p: W \to V$ such
that $p_* \O_W \to \O_V$ is an isomorphism and $R^i p_* \O_W=0$ for $i>0$.  In positive characteristic, the correct notion is that of a \defn{rational resolution}: a smooth variety $W$ and a proper birational map $p: W \to V$ such
that $p_* \O_W \simeq \O_V$ and $p_* \omega_W \simeq \omega_V$ and $R^i p_* \O_W=0 = R^ip_*\omega_V$ for $i>0$.

\begin{thm} \label{T:RatSing}
  Projected Richardson varieties have rational resolutions.
\end{thm}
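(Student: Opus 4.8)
The plan is to produce a rational resolution of $\Pi_u^w$ by composing the projection $\pi$ with a rational resolution of its Richardson model. Fix a Richardson model $X_u^w$ for $\Pi := \Pi_u^w$, set $d := \dim\Pi = \dim X_u^w$, and let $p\colon W \to X_u^w$ be a rational resolution, which exists by Theorem~\ref{thm:RatlRes}. Because $\Xo_u^w$ is dense in $X_u^w$ and $\pi$ restricts on it to an isomorphism onto $\Pio_u^w$ (recalled just before Corollary~\ref{C:ProjectionDimension}), the composite $q := \pi\circ p\colon W \to \Pi$ is a proper birational morphism from a smooth variety, i.e.\ a resolution of singularities of $\Pi$; the goal is to check that $q$ is a \emph{rational} resolution.

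For the structure sheaf this is immediate: the Grothendieck spectral sequence for $Rq_* = R\pi_*\circ Rp_*$, together with $Rp_*\O_W = \O_{X_u^w}$ (as $p$ is a rational resolution) and $R\pi_*\O_{X_u^w} = \O_\Pi$ (Theorem~\ref{T:crepant}), gives $Rq_*\O_W = \O_\Pi$. The same spectral sequence, now using $p_*\omega_W = \omega_{X_u^w}$ and $R^jp_*\omega_W = 0$ for $j>0$, identifies $R^iq_*\omega_W$ with $R^i\pi_*\omega_{X_u^w}$ for every $i$. So it remains to establish
\[
  \pi_*\omega_{X_u^w} = \omega_\Pi \qquad\text{and}\qquad R^i\pi_*\omega_{X_u^w} = 0 \ \ (i>0).
\]
Since $X_u^w$ has a rational resolution it is Cohen--Macaulay, so its dualizing complex is $\omega_{X_u^w}[d]$; applying Grothendieck--Serre duality to the proper morphism $\pi$ and invoking $R\pi_*\O_{X_u^w} = \O_\Pi$ yields $R\pi_*\omega_{X_u^w}[d] = \omega_\Pi^\bullet$, the dualizing complex of $\Pi$. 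Taking cohomology sheaves, $\pi_*\omega_{X_u^w} = \mathcal H^{-d}(\omega_\Pi^\bullet) = \omega_\Pi$ for free, while $R^i\pi_*\omega_{X_u^w} = 0$ for $i>0$ is \emph{equivalent} to $\Pi$ being Cohen--Macaulay.

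So the whole statement reduces to a single cohomology-vanishing input: a Grauert--Riemenschneider-type vanishing $R^i\pi_*\omega_{X_u^w} = 0$ $(i>0)$, equivalently a Kodaira--Kawamata--Viehweg-type vanishing on $X_u^w$. Concretely, twisting by a high power $L^{\otimes N}$ of an ample bundle $L$ on $G/P$ and pushing to a point, the Leray spectral sequence for $\pi$ (whose rows with $p>0$ vanish once $N\gg 0$) identifies $H^0\!\big(\Pi; R^i\pi_*\omega_{X_u^w}\otimes L^{\otimes N}\big)$ with $H^i\!\big(X_u^w;\omega_{X_u^w}\otimes\pi^*L^{\otimes N}\big)$; thus $R^i\pi_*\omega_{X_u^w}=0$ for $i>0$ follows from $H^i\!\big(X_u^w;\omega_{X_u^w}\otimes\pi^*L^{\otimes N}\big)=0$ for $i>0$, $N\gg 0$. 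By the projection formula along the rational resolution $p$, this is the same as $H^i(W;\omega_W\otimes q^*L^{\otimes N})=0$ for $i>0$, and here one uses that $W$ can be chosen to be an iterated $\PP^1$-bundle (a Bott--Samelson--Demazure--Hansen-type resolution, as in the proof of Theorem~\ref{thm:RatlRes}), for which twisted vanishing of $\omega_W$ against nef line bundles is classical --- it is the mechanism behind Demazure's character formula; see \cite{BK}.

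The main obstacle is precisely this vanishing in positive characteristic: Kawamata--Viehweg vanishing fails for general smooth projective varieties when $\operatorname{char}k = p > 0$, so the argument genuinely requires the special geometry of the resolution $W$ --- its structure as a tower of $\PP^1$-bundles, and the compatibility of the standard Frobenius splitting with its boundary divisors --- rather than formal properties of $\pi$ alone. Once this vanishing is available, $q\colon W\to\Pi$ satisfies all of the defining properties of a rational resolution, so $\Pi_u^w$ has a rational resolution; along the way one records that $\Pi_u^w$ is Cohen--Macaulay (this being equivalent to the vanishing just used), which together with Corollary~\ref{C:Normal} recovers normality as well.
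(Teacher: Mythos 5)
Your skeleton---compose a rational resolution of the Richardson model with $\pi$ and verify the four conditions for the composite---is the same as the paper's, and your treatment of the structure sheaf (Grothendieck spectral sequence together with Theorem~\ref{T:crepant} and $Rp_*\O_W=\O_{X_u^w}$) matches the paper exactly. Your duality step is a legitimate alternative to the paper's appeal to \cite[Lemma~3.4.2]{BK}: granting $R^{>0}\pi_*\omega_{X_u^w}=0$, it does give $\pi_*\omega_{X_u^w}=\omega_{\Pi}$ and identifies the remaining vanishing with Cohen--Macaulayness of $\Pi$.

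The gap is that the one vanishing everything rests on is asserted, not proved. You correctly isolate $H^i\bigl(W;\omega_W\otimes q^*L^{\otimes N}\bigr)=0$ for $i>0$, $N\gg0$ (equivalently $R^{>0}\pi_*\omega_{X_u^w}=0$) as the crux, but the justification offered fails on three counts: (1) vanishing of $H^{>0}(\,\cdot\,;\omega\otimes M)$ for merely \emph{nef} $M$ is false even in characteristic zero (take $M=\O$ and $i=\dim$), and $q^*L^{\otimes N}$ is only semi-ample and big, not ample, so even the Mehta--Ramanathan Kodaira-type vanishing for Frobenius split smooth varieties does not apply directly; (2) the ``mechanism behind Demazure's character formula'' is vanishing for \emph{structure-sheaf} twists by pullbacks of dominant line bundles on Bott--Samelson varieties, not for $\omega$-twists by arbitrary nef bundles; (3) the resolution of a Richardson variety used in Theorem~\ref{thm:RatlRes} is $Z=Z_u\times_{G/B}Z^w$, a transverse fibre product of two Bott--Samelson varieties (equivalently a fibre of one over $G/B$), not an iterated $\PP^1$-bundle, so the setting of the fact you invoke is not even present. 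In positive characteristic the vanishing genuinely requires the Frobenius splitting of $Z$ by a section of $\omega_Z^{-(p-1)}$ vanishing on the boundary divisor $\Delta$ (Theorem~\ref{thm:ZSmooth}): since $q$ is an isomorphism off $\Delta$, the exceptional locus of $q$ lies in $\Delta$, and \cite[Theorem~1.3.14]{BK} (Mehta--van der Kallen-type Grauert--Riemenschneider for split varieties) then yields $R^iq_*\omega_Z=0$ for $i>0$ directly---this is exactly what the paper does, after which your duality detour and the twisting-by-$L^{\otimes N}$ reduction are unnecessary. Alternatively your global-sections route could be closed using that splitting together with Lemma~\ref{AmpleDi} and the injection of \cite[Lemma~1.4.11]{BK}, but some such Frobenius-splitting input must be supplied explicitly; it is not a classical nef vanishing.
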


\begin{proof}
By Theorem \ref{thm:RatlRes} in the Appendix, Richardson varieties have rational resolutions.
Let $\Pi$ be a projected Richardson variety, $X$ its Richardson model, and
$\psi : Z \to X$ the rational resolution of $X$ of Theorem \ref{thm:RatlRes}. Consider the map
$\psi \circ \pi: Z \to \Pi$. Since $\psi$ and $\pi$ are
proper and birational, so is $\psi \circ \pi$. By functoriality of
pushforward, $(\psi \circ \pi)_* \O_Z \to \O_X$ is an
isomorphism. From the Grothendieck spectral sequence \cite[\S 5.8]{Wei},
and the knowledge that $R^i \psi_* \O_Z$ and $R^i \pi_* \O_X$
vanish, we know that $R^i (\psi \circ \pi)_* \O_Z$ vanishes for $i > 0$.  
In the notation of Appendix \ref{sec:appendix}, the map $(\psi \circ \pi)|_{\Zo}: \Zo \to \Pio$ is an isomorphism, so the boundary divisor $\Delta \subset Z$ contains the exceptional locus of $\psi \circ \pi$.
So~\cite[Theorem~1.3.14]{BK} applies, and we deduce that $R^i (\psi\circ \pi)_* \omega_Z = 0$ for $i>0$.  Finally, \cite[Lemma 3.4.2]{BK}, states that the isomorphism $(\psi \circ \pi)_* \omega_Z \simeq \omega_X$ follows from the other statements.
\end{proof}

\begin{cor} \label{c:CohenMacaulay}
  Projected Richardson varieties are Cohen-Macaulay.
\end{cor}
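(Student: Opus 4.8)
The plan is to obtain this as an immediate consequence of Theorem~\ref{T:RatSing}, using the standard principle that a variety admitting a rational resolution is Cohen--Macaulay. Let $\Pi$ be a projected Richardson variety, and let $p : Z \onto \Pi$ be the rational resolution produced in the proof of Theorem~\ref{T:RatSing}: here $Z$ is smooth and projective, $p$ is proper and birational, $p_* \O_Z \iso \O_\Pi$, $p_* \omega_Z \iso \omega_\Pi$, and $R^i p_* \O_Z = 0 = R^i p_* \omega_Z$ for all $i > 0$. (Note that $p_*\O_Z \iso \O_\Pi$ with $Z$ normal already re-derives the normality of $\Pi$ from Corollary~\ref{C:Normal}.)

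The main step is a Grothendieck--Serre duality argument for the proper morphism $p$. Since $Z$ is smooth, hence Cohen--Macaulay, its dualizing complex is $\omega_Z$ placed in a single cohomological degree; pushing forward along $p$ and invoking the vanishing $R^i p_* \omega_Z = 0$ for $i>0$ together with the identification $p_* \omega_Z \iso \omega_\Pi$, one concludes that the dualizing complex of $\Pi$ is quasi-isomorphic to a single sheaf, namely $\omega_\Pi$, in the appropriate degree. An equidimensional (indeed normal) variety whose dualizing complex is concentrated in one cohomological degree is Cohen--Macaulay; this is exactly the packaging supplied by \cite[Lemma~3.4.2]{BK} and the duality material surrounding it, valid over a field of arbitrary characteristic for projective schemes. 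Hence $\Pi$ is Cohen--Macaulay. Alternatively, one may simply cite the implication ``rational resolution $\Rightarrow$ Cohen--Macaulay'' directly from \cite{BK}, since Theorem~\ref{T:RatSing} already furnishes the hypothesis.

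I do not expect a genuine obstacle here: the definition of ``rational resolution'' was arranged precisely so that the $\omega$-side of this duality argument goes through, and every scheme involved is projective over the base field, so no completeness, excellence, or characteristic restrictions intervene. The only point requiring care is to quote the implication in a form that does not tacitly assume characteristic zero, which is why we lean on \cite{BK} rather than on the classical characteristic-zero statement of Kempf.
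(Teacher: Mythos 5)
Your proposal is correct and follows essentially the same route as the paper: both deduce the corollary immediately from Theorem~\ref{T:RatSing} via the standard implication that a projective variety admitting a rational resolution is Cohen--Macaulay. The paper simply cites \cite[Proposition 4]{Ram} for this implication rather than the duality packaging in \cite{BK}, but the underlying argument (Grothendieck duality forcing the dualizing complex of $\Pi$ to be concentrated in one degree) is the same characteristic-free principle you sketch.
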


\begin{proof}
By \cite[Proposition 4]{Ram}, projective varieties with rational resolutions are automatically Cohen-Macaulay.
\end{proof}

We end with a caution. Many of these results show that projected Richardson varieties are like Richardson varieties. 
Here is a way in which they are not:

\begin{Example}
  Let $L$ be an ample line bundle on $G/P$, and assume $G/P$ is
  minuscule.  Let $\Pi \subseteq G/P$ be a projected Richardson
  variety.  Let $S$ be the projective coordinate ring,
  $\bigoplus_{n=0}^{\infty} H^0(G/P, L^{\otimes n})$.  Let $I \subset S$ 
  be the homogenous ideal of $\Pi$. If $\Pi$ is actually a
  Richardson variety, then by \cite{Ramanathan} $I$ is generated in degree $1$,
  but in general it may not be.

Specifically, let $G=\Sp_4$, the group of symmetries of $k^4$ preserving the symplectic form $e_1 \wedge e_3+ e_2 \wedge e_4$. 
Then $G/B$ is the space of pairs $(L,M)$ where $M$ is an isotropic $2$-plane in $k^4$ and $M$ is a line in $L$. 
Let $G/P$ be the partial flag variety where we forget $M$, so $G/P \cong \PP^3$.
Consider the Richardson variety $X$ in $G/B$ where we require that $M$ meets the isotropic planes $L_1:=\mathrm{Span}(e_1, e_2)$ and $L_2:= \mathrm{Span}(e_3, e_4)$.
The projection $\Pi$ of this Richardson is the set of points which lie on an isotropic line joining $\PP(L_1)$ to $\PP(L_2)$. 
For $L=(p_1: p_2: p_3: p_4)$ of $\PP^3$, the unique line through $L$, $L_1$ and $L_2$ is the one which meets $L_1$ at $(p_1:p_2:0:0)$ and  $L_2$ at $(0:0:p_3:p_4)$.
This line is isotropic if and only if $p_1 p_3 + p_2 p_4=0$.
So $\Pi$ is a quadric, cut out by the equation $p_1 p_3+ p_2 p_4=0$.
\end{Example}

%
%

\section{Projected Richardson varieties are the only split
  subvarieties}

%

Let $\phi$ be the Frobenius splitting on $G/B$ which splits the
Richardson varieties \cite[Theorem 2.3.1]{BK}, and let $\pi_* \phi$ be the
induced splitting on $G/P$. The purpose of this section is to establish the following theorem:

\begin{theorem} \label{T:OnlySplit}
The compatibly split subvarieties of $G/P$, with respect to the splitting $\pi_* \phi$, are precisely the projected Richardson varieties.
\end{theorem}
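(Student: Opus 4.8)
The plan is to prove the two inclusions separately. One direction — that every projected Richardson variety is compatibly split — is already established in Corollary \ref{cor:pRsplit}, so the real content is the converse: that a compatibly split subvariety $\Pi \subseteq G/P$ (with respect to $\pi_* \phi$) must be a projected Richardson variety. The natural strategy is to pull back to $G/B$ and exploit the known $P=B$ case from \cite{Hague}, namely that the $\phi$-compatibly split subvarieties of $G/B$ are exactly the Richardson varieties.

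First I would recall the standard fact (see \cite[Lemma 1.1.8 and its corollaries]{BK}) about how compatible splittings interact with a morphism $f : X \to Y$ satisfying $f^\# : \O_Y \xrightarrow{\sim} f_* \O_X$: given the induced splitting $f_* \phi$ on $Y$, a subvariety $Z \subseteq Y$ is compatibly $f_*\phi$-split if and only if the scheme-theoretic preimage $f^{-1}(Z)$ is compatibly $\phi$-split in $X$. (Here one uses that $\pi_* \O_{G/B} = \O_{G/P}$ since $\pi$ is a fiber bundle with rational, hence connected and reduced, fibers — part (3) of Lemma \ref{lem:splittings} and the Remark following it.) So if $\Pi \subseteq G/P$ is compatibly $\pi_*\phi$-split, then $\pi^{-1}(\Pi)$ is compatibly $\phi$-split in $G/B$, hence — being a union of compatibly split irreducible subvarieties by part (2) of Lemma \ref{lem:splittings} together with \cite{Hague} — it is a (reduced) union of Richardson varieties $\bigcup_j X_{u_j}^{w_j}$. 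Applying $\pi$ and using that $\pi(\pi^{-1}(\Pi)) = \Pi$, we get $\Pi = \bigcup_j \pi(X_{u_j}^{w_j}) = \bigcup_j \Pi_{u_j}^{w_j}$, a union of projected Richardson varieties. Since $\Pi$ is irreducible, it equals one of the $\Pi_{u_j}^{w_j}$ (each $\Pi_{u_j}^{w_j}$ is irreducible, being the image of an irreducible variety), completing the argument. Note that by Proposition \ref{P:EquivalenceClassClosed} and Corollary \ref{C:NotInj} we may as well assume each $(u_j, w_j)$ is a $P$-Bruhat interval, so these are genuine Richardson models.

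The main obstacle is the claim that $\pi^{-1}(\Pi)$ is \emph{reduced} and is a \emph{union of Richardson varieties as a scheme}, not just set-theoretically. Set-theoretically this is immediate from $\pi$ being flat with irreducible fibers (so $\pi^{-1}$ of an irreducible is irreducible, and $\pi^{-1}$ of a union is the union of the $\pi^{-1}$'s). The scheme structure is exactly what compatible splitting buys us: a compatibly split closed subscheme is reduced (part (1) of Lemma \ref{lem:splittings} applied to the subscheme with its induced splitting), and a reduced scheme whose underlying set is a finite union of Richardson varieties, each of which is compatibly split, \emph{is} that union of Richardson varieties as a scheme. I would need to be slightly careful that $f^{-1}(Z)$ with its natural scheme structure genuinely acquires a compatible splitting from $\phi$ — this is where the precise form of the pullback statement in \cite{BK} matters; the cleanest route is to observe directly that if $\sigma : F^*\O_{G/P} \to \O_{G/P}$ is the splitting compatible with $\Pi$, then its pullback along the flat map $\pi$ is a splitting of $G/B$ compatible with $\pi^{-1}(\Pi)$, using flatness to commute $\pi^*$ past the Frobenius and past ideal sheaves. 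Once that is in hand, combining \cite{Hague} with Rietsch's description (Proposition \ref{P:Rietsch}) of how projected Richardson varieties stratify $G/P$ finishes the proof, and as a byproduct reproves the finiteness statement of [Schwede, Kumar–Mehta] concretely in this setting.
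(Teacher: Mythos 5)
There is a genuine gap, and it is the step you yourself flag as the ``main obstacle'': the claim that if $\Pi\subseteq G/P$ is compatibly split for $\pi_*\phi$, then $\pi^{-1}(\Pi)$ is compatibly split for $\phi$ on $G/B$. Lemma \ref{lem:splittings}(3) (BK Lemma 1.1.8) only goes one way: compatibly split subvarieties upstairs have compatibly split \emph{images} downstairs; there is no converse for preimages. Your proposed fix --- ``pull back the splitting along the flat map $\pi$'' --- is not an available construction: a splitting of $G/P$ is a map $F_*\O_{G/P}\to\O_{G/P}$, and the base-change map relating $\pi^*F_*\O_{G/P}$ to $F_*\O_{G/B}$ points the wrong way, so flatness does not let you manufacture a splitting of $G/B$, let alone recover the specific splitting $\phi$ to which Hague's classification applies. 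Worse, the implication you need is actually false in this setting: take the paper's own example with $G=\Sp_4$, $G/P\cong\PP^3$, and $\Pi$ the quadric $\{p_1p_3+p_2p_4=0\}$. This $\Pi$ is a projected Richardson variety, hence compatibly $\pi_*\phi$-split by Corollary \ref{cor:pRsplit}; but $\pi^{-1}(\Pi)$ is an irreducible divisor in $G/B$ that is neither $B^+$- nor $B^-$-stable (the only Richardson divisors are Schubert and opposite Schubert divisors, whose images in $\PP^3$ are hyperplanes, not a quadric), so $\pi^{-1}(\Pi)$ is not a Richardson variety and therefore, by Hague's theorem itself, is \emph{not} compatibly $\phi$-split. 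So the reduction to the $P=B$ case via preimages cannot be repaired by being ``slightly careful''; the needed statement is false.

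For comparison, the paper does not invoke Hague as a black box at all (indeed it reproves and simplifies the $P=B$ case). Its route is the general criterion of Theorem \ref{thm:allsplit}: using Kumar--Mehta's result that any compatibly split subvariety lies in the divisor of the section of $\omega^{-(p-1)}$ defining the splitting, together with Lemma \ref{lem:onlyanticanonical}, one shows that once the union of codimension-one strata is anticanonical, nothing outside the stratification can be split, and one inducts down the strata. The inputs specific to $G/P$ are: the projected Richardson varieties are compatibly split (Corollary \ref{cor:pRsplit}), normal with smooth open strata, they stratify $G/P$ (Proposition \ref{P:Rietsch}), and the sum of the projected Richardson hypersurfaces in each projected Richardson variety is anticanonical (Lemma \ref{L:Hyp1}, which uses $\pi_*\omega_{X_u^w}=\omega_{\Pi_u^w}$ from Theorem \ref{T:RatSing}). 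If you want to salvage your write-up, you would need to replace the preimage argument with something of this divisor-theoretic kind.
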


There is an as yet imprecise analogy between Frobenius splitting and semi-classical deformation. 
Our Theorem~\ref{T:OnlySplit} is analogous to results of Goodearl and Yakimov, see~\cite{GY} and~\cite{Yakimov}.

The theorem below will be our way of characterizing the compatibly split
subvarieties in a Frobenius split variety, under very special hypotheses
that we verify in the case of $G/P$. Call a divisor $D$ in a
normal variety $X$ \defn{anticanonical} if $D\cap X_{reg}$ is 
anticanonical in the regular locus $X_{reg}$.

\begin{lem}\label{lem:onlyanticanonical}
  Let $X$ be complete and normal, and let $D$ be a divisor containing
  an anticanonical divisor (meaning, $D\cap X_{reg}$ is anticanonical
  plus effective).  If there is a splitting $\phi$ of $X$ that
  compatibly splits $D$, then $D$ is anticanonical, and $\phi$ splits
  no other proper subvarieties of $X_{reg} \setminus D$.
\end{lem}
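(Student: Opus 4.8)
\textbf{Proof plan for Lemma~\ref{lem:onlyanticanonical}.}

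The plan is to translate everything into statements about the sections of the anticanonical sheaf $\omega_X^{-1}$ over the regular locus, where Frobenius splittings live most naturally. First I would recall the standard dictionary: a Frobenius splitting of $X$ is, up to a unit, given by a section $\sigma$ of $\omega_X^{1-p}$, and it compatibly splits a prime divisor $D'$ exactly when $\sigma$ vanishes on $D'$ to order exactly $p-1$ (with the rest of the divisor of $\sigma$ being disjoint from $D'$, up to higher-order terms the splitting condition forces). Working on $X_{reg}$, which is smooth, this is rigorous; since $X$ is normal, $X \setminus X_{reg}$ has codimension $\ge 2$, so divisors and sections of line bundles extend uniquely and it costs nothing to restrict to $X_{reg}$. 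Write $D \cap X_{reg} = E + F$ where $E$ is anticanonical (i.e.\ $E \in |\omega_X^{-1}|$ on $X_{reg}$) and $F \ge 0$ is effective.

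The key computation is a degree/divisor count. Let $\sigma \in H^0(X_{reg}, \omega_X^{1-p})$ be the section giving $\phi$. Since $\phi$ compatibly splits $D$, hence each component of $D$, we get $\operatorname{div}(\sigma) \ge (p-1)(E+F)$ as divisors on $X_{reg}$. On the other hand $\operatorname{div}(\sigma)$ is a section of $\omega_X^{1-p}$, and $(p-1)E$ is already a section of that same sheaf (being $(p-1)$ times an anticanonical divisor). Thus $\operatorname{div}(\sigma) - (p-1)E$ is an effective divisor that is also a principal-divisor-class representative of $\O_{X_{reg}}$, i.e.\ it is the divisor of a global regular function on the complete normal variety $X$; such a function is constant, so $\operatorname{div}(\sigma) - (p-1)E = 0$. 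This forces $F = 0$ (so $D = E$ is anticanonical) and simultaneously pins down $\operatorname{div}(\sigma) = (p-1)E$ exactly.

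From $\operatorname{div}(\sigma) = (p-1)E = (p-1)D$ on $X_{reg}$ the last clause is immediate: if $V \subseteq X_{reg} \setminus D$ were a proper subvariety compatibly split by $\phi$, then $\sigma$ would have to vanish along $V$ (to order $p-1$ on its divisorial part, and at least generically vanish in the non-divisorial case — the standard criterion says a compatibly split closed subscheme is contained in the zero scheme of $\sigma$ in the relevant sense), contradicting that $\operatorname{div}(\sigma)$ is supported on $D$, which is disjoint from $V$. More carefully: the complement $X_{reg} \setminus D$ is the locus where $\sigma$ is nowhere zero, so the induced splitting there is the trivial splitting of a smooth variety by a nowhere-vanishing top form raised to the $(1-p)$, which compatibly splits only the empty set among proper subvarieties.

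\textbf{Main obstacle.} The delicate point is the interplay between ``divisor containing an anticanonical divisor'' and the exact-order-$p-1$ vanishing condition, and making the principal-divisor argument airtight on a merely normal (not smooth) $X$. The cleanest route is to do all divisor arithmetic on $X_{reg}$, invoke that $\operatorname{codim}(X \setminus X_{reg}) \ge 2$ to identify $H^0(X_{reg}, \omega_X^{1-p})$ with $H^0(X, \omega_X^{1-p})$ and to identify Weil divisor classes, and then use completeness of $X$ to conclude global regular functions are constant; one must also be slightly careful that $(p-1)E$ literally arises as $\operatorname{div}$ of a section of $\omega_X^{1-p}$, which is exactly the meaning of $E$ being anticanonical. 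I expect the rest to be a routine application of the section-of-$\omega^{1-p}$ formalism for Frobenius splittings as recalled in \cite{BK}.
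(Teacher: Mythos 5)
Your proposal is correct and follows essentially the same route as the paper's proof: identify the splitting with a section of $\omega_X^{1-p}$ on $X_{reg}$ (the paper cites \cite[Proposition 1.3.11 and Remark 1.3.12]{BK}), use the order-$(p-1)$ vanishing along compatibly split divisors and containment of split subvarieties in the splitting divisor (the paper cites \cite[Proposition 2.1]{KumarMehta}), and conclude by the effective-and-principal (equivalently, effective versus anti-effective) divisor argument on the complete normal $X$. The only difference is notational bookkeeping, so there is nothing substantive to add.
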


\begin{proof}
  Since $X$ is normal, we may apply \cite[Proposition 1.3.11 and
  Remark 1.3.12]{BK}.  Let $E$ be the divisor in $X$ corresponding
  to the section of $\omega_{X}^{-(p-1)}$ over $X_{reg}$ giving the splitting of $X$.  
  By \cite[Proposition 2.1]{KumarMehta}, 
  if $D$ is a split divisor in $X$, then $E$ vanishes to order $p-1$ on $D$,
  i.e. $E - (p-1)D$ is effective.

  If $D = A + D'$ where $A$ is anticanonical and $D'$ is effective,
  then then the effective divisor $E - (p-1)D$ is equivalent 
  to the anti-effective divisor  $-(p-1) D'$.   
  Since $X$ is complete, $E - (p-1)D$ and $D'$ must each be empty.
  
  Again by \cite[Proposition 2.1]{KumarMehta}, any proper compatibly
  split subvariety of $X_{reg}$ must lie inside $E$, whose support is $D$.
\end{proof}

\newcommand\calY{{\mathcal Y}}
\newcommand\union{\cup}

That lemma lets us simplify slightly the argument from \cite{Hague},
as follows.

\begin{thm}\label{thm:allsplit}
  Let $X$ be complete, normal, and Frobenius split. Let $\calY$ be a
  finite collection of compatibly split (irreducible) subvarieties of $X$ defining a
  stratification, i.e.  $X\in \calY$ and the intersection $Y_1\cap Y_2$ 
  of any two closed strata must be a union of others. Assume that
  \begin{enumerate}
  \item each closed stratum $Y\in \calY$ is normal,
  \item each open stratum $Y \setminus \union_{Z\in \calY, Z\subsetneq Y} Z$
    is regular, and
  \item $\partial X := \union_{Y\in \calY, \codim_X Y = 1} Y$ is an anticanonical
    divisor in $X$.
  \end{enumerate}
  Then $\calY$ contains all the compatibly split subvarieties in $X$,
  and for each $Y\in\calY$, $\union_{Y'\in \calY, Y' \subsetneq Y} Y'$ 
  is an anticanonical divisor.
\end{thm}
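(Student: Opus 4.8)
The plan is to argue by induction on $\dim X$, using Lemma \ref{lem:onlyanticanonical} to clear out the top-dimensional stratum and then descend into $\partial X$. First I would observe that $X$ is complete and normal, and by hypothesis (3) the divisor $\partial X = \union_{Y \in \calY,\ \codim_X Y = 1} Y$ is anticanonical; since $X$ is Frobenius split with splitting $\phi$, and each codimension-one $Y \in \calY$ is compatibly split, $\partial X$ is a compatibly split divisor containing an anticanonical divisor (indeed equal to one). Lemma \ref{lem:onlyanticanonical} then applies with $D = \partial X$: it tells us both that $\partial X$ is precisely anticanonical and, crucially, that $\phi$ splits no proper subvariety of $X_{reg} \setminus \partial X$. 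By hypothesis (2), the open stratum $X \setminus \union_{Z \in \calY, Z \subsetneq X} Z$ is regular and lies in $X_{reg} \setminus \partial X$, so any compatibly split subvariety of $X$ not contained in $\partial X$ would have to be all of $X$. Hence every proper compatibly split subvariety of $X$ is contained in $\partial X$, i.e. in some codimension-one stratum $Y$.

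Next I would set up the induction. For each $Y \in \calY$ of codimension one, consider the collection $\calY_Y := \{Z \in \calY : Z \subseteq Y\}$. I claim this makes $(Y, \calY_Y)$ satisfy the hypotheses of the theorem with $X$ replaced by $Y$: property (1) says $Y$ is normal (so in particular complete and normal), $Y$ is Frobenius split compatibly inside $X$ hence Frobenius split in its own right by Lemma \ref{lem:splittings}(2), and properties (1) and (2) for $\calY_Y$ are inherited directly from the corresponding properties for $\calY$; the stratification/intersection condition is inherited because $Y_1 \cap Y_2$ for $Y_1, Y_2 \subseteq Y$ is a union of strata of $\calY$, all necessarily contained in $Y$. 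The one genuinely substantive point is property (3): I must show that $\partial Y := \union_{Z \in \calY_Y,\ \codim_Y Z = 1} Z$ is anticanonical in $Y$. This is where the argument has real content, and I expect it to be the main obstacle.

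To handle property (3) for $Y$, I would use the adjunction-type input that $\partial X$ is anticanonical together with the fact that $\partial X$ compatibly split in $X$ forces each of its components $Y$ to be compatibly split, and then invoke \cite[Proposition 1.3.11, Remark 1.3.12]{BK} (as in Lemma \ref{lem:onlyanticanonical}) applied to the split divisor $D' := \union_{Z \in \calY,\ \codim_X Z = 2,\ Z \subseteq Y} Z$ inside $Y$: the restriction to $Y$ of the section of $\omega_X^{-(p-1)}$ defining $\phi$ vanishes along $\partial X$, and the residue/adjunction formula identifies the induced splitting of $Y$ as having divisor of zeros exactly $\union_{Z \neq Y,\ \codim_X Z = 2} (Z \cap Y)$, which is $\partial Y$. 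Since that induced splitting is a genuine Frobenius splitting of the normal variety $Y$, its divisor of zeros is anticanonical in $Y_{reg}$, giving property (3). Granting this, the inductive hypothesis applied to $(Y, \calY_Y)$ yields that $\calY_Y$ contains every compatibly split subvariety of $Y$ and that $\union_{Z \in \calY_Y,\ Z \subsetneq Z'} Z$ is anticanonical in each $Z' \in \calY_Y$.

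Finally I would assemble the conclusion. Any compatibly split subvariety $V \subsetneq X$ lies in $\partial X$, hence in some codimension-one stratum $Y$ (pass to an irreducible component of $\partial X$ containing $V$); by Lemma \ref{lem:splittings}(2), $V$ is compatibly split in $Y$ with respect to the induced splitting, so by induction $V \in \calY_Y \subseteq \calY$. This proves $\calY$ contains all compatibly split subvarieties. The statement that $\union_{Y' \in \calY,\ Y' \subsetneq Y} Y'$ is anticanonical in each $Y \in \calY$ is the case $X = Y$ handled in the first paragraph (for $Y = X$) and is supplied by the induction for all proper strata. The base case $\dim X = 0$ is trivial since then $\calY = \{X\}$ and there are no proper subvarieties. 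The crux throughout is the adjunction computation identifying the induced splitting's divisor on each stratum with the union of the next-smaller strata; everything else is bookkeeping with Lemma \ref{lem:splittings} and Lemma \ref{lem:onlyanticanonical}.
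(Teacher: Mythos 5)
Your overall strategy coincides with the paper's: the paper runs a minimal-counterexample argument on codimension rather than an explicit induction on dimension, but in both cases Lemma \ref{lem:onlyanticanonical} together with hypotheses (2) and (3) forces any proper compatibly split subvariety into the boundary, and the only substantive work is verifying hypothesis (3) for a boundary stratum $Y$ so that one can descend into it.

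Where your write-up has a real leap is precisely that verification. You assert that the residue of the splitting section along $Y$ has divisor of zeros \emph{exactly} $(p-1)\partial Y$, where $\partial Y$ is the union of the codimension-one strata of $Y$. What the residue/adjunction computation actually gives (after the first application of Lemma \ref{lem:onlyanticanonical} shows that the divisor of the splitting section on $X$ is exactly $(p-1)\partial X$) is that the induced splitting on $Y$ has divisor $(p-1)\bigl(\overline{\partial X\setminus Y}\cap Y\bigr)$; identifying this with $\partial Y$ is not automatic. One containment is harmless: each component of $\overline{\partial X\setminus Y}\cap Y$ is a stratum of codimension one in $Y$ by the stratification axiom, so this anticanonical divisor sits inside $\partial Y$. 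But hypothesis (3) for $Y$ asks that $\partial Y$ itself be anticanonical, so you must still rule out extra codimension-one strata of $Y$ beyond that divisor. This is exactly the step the paper closes with a second application of Lemma \ref{lem:onlyanticanonical} (equivalently, of \cite{KumarMehta}: each codimension-one stratum of $Y$ is compatibly split for the induced splitting, hence the residue section vanishes to order $p-1$ along it and it lies in the divisor --- ``the two are equal''). With that one extra invocation your argument closes and is essentially the paper's proof; the remaining difference is cosmetic, in that the paper obtains the anticanonicality of $\overline{\partial X\setminus Y}\cap Y$ from the adjunction formula for $\omega_Y$ rather than from a residue of the splitting section. (Also a small slip: the literal restriction to $Y$ of the section of $\omega_X^{-(p-1)}$ is zero, since the section vanishes along $Y\subseteq\partial X$; it is the residue, not the restriction, that induces the splitting of $Y$.)
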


It was recently proven in \cite{Schwede,KumarMehta}
that on a variety with a fixed Frobenius splitting, there are only
finitely many compatibly split subvarieties, so the finiteness condition
on $\calY$ is automatic.

\begin{proof}
  Let $(X,Z)$ be a minimal counterexample, in that $Z\notin \calY$ is
  a compatibly split subvariety in $X$, with $\codim_X Z$
  minimized. Since $X\in\calY$, we know $Z \subsetneq X$.

  So we claim $Z \subseteq \union_{Y\in \calY, Y\subsetneq X} Y$.
  This uses Lemma \ref{lem:onlyanticanonical}, assumption (3), 
  and the fact that the open stratum 
  $X\setminus \union_{Y\in \calY, Y\subsetneq X} Y$ is regular (assumption (2)).

  Since $Z$ is irreducible, and $\union_{Y\in \calY, Y\subsetneq X} Y$
  is a finite union by assumption on $\calY$, we have that $Z$ is
  contained in some divisor $X'\in \calY$.

  If we can show that $X'$ satisfies the assumptions on $X$, then the
  pair $(X',Z)$ will be a smaller counterexample, contradicting minimality.
  Assumptions (1) and (2) are clear. To find a split anticanonical divisor
  $\partial X'$ inside $X'$, note that $\overline{\partial X \setminus X'}$ 
  is a union of strata, 
  and take $\partial X' := X' \cap \overline{\partial X \setminus X'}$.
  By the adjunction formula, $\partial X'$ is anticanonical in $X'$.
  It is also contained in $\union_{Y\in \calY, Y\subsetneq X'} Y$, 
  so by Lemma \ref{lem:onlyanticanonical}'s conclusion on $D$,
  the two are equal.  
\end{proof}

For a particularly simple example of this theorem, let $X$ be the
projective toric variety associated to a polytope $P$, $\phi$ its
standard splitting, and $\calY$ the
set of toric subvarieties (associated to the faces of $P$). 
Then $\partial X$ is the toric subscheme associated to the boundary 
$\partial P$ in its usual sense, and the divisor $X'$ will be associated 
to some facet $F$ of $P$. 
If we divide this spherical boundary $\partial P$ into 
the discs $F$ and $\overline{\partial P \setminus F}$, 
their intersection is the boundary of $F$, in parallel
with the adjunction formula calculation in the proof.

Conditions (1) and (2) definitely do not hold in general. For example,
there is a splitting on $\PP^2$ that compatibly splits a
nodal elliptic curve, and a splitting on $\PP^3$ that splits a normal quartic
surface with an isolated singular (and split) point.
So we are lucky to be able to apply this theorem in
the case of the standard splitting on $G/P$.

\begin{lemma} \label{L:Hyp1} 
  Let $\Pi$ be any projected Richardson variety (though we will only
  need the case $\Pi = G/P$). Let $\Pi_1$, $\Pi_2$, \dots, $\Pi_r$ be
  those projected Richardson varieties which are hypersurfaces in $\Pi$.  
  Then $\sum [ \Pi_i ]$ is an anticanonical divisor in $\Pi_u^w$.
\end{lemma}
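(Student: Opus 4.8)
The plan is to reduce the anticanonical claim for an arbitrary projected Richardson variety $\Pi = \Pi_u^w$ to two inputs: (a) the known description of the anticanonical divisor of a Richardson variety $X_u^w$ as the (reduced) sum of its Richardson hypersurfaces, and (b) the cohomological triviality of the map $\pi : X_u^w \onto \Pi$ from Theorem~\ref{T:crepant}, together with the rational-resolution/normality results (Theorem~\ref{T:RatSing}, Corollary~\ref{C:Normal}) which let us compare dualizing sheaves. First I would work on the smooth loci. On $(\Pi_u^w)_{reg}$ the canonical bundle is a genuine line bundle, and I would show $\omega_{\Pi_u^w}^{-1}$ restricted there is $\O(\sum \Pi_i)$. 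The boundary $\coprod_{u\le u'\le_P w'\le w,\ (u',w')\ne(u,w)} \Pio_{u'}^{w'}$ of $\Pi_u^w$ (Proposition~\ref{P:Rietsch}) is set-theoretically the union of the closed strata properly contained in $\Pi_u^w$; the codimension-one ones are exactly $\Pi_1,\dots,\Pi_r$. By Corollary~\ref{C:reducedintersection} the relevant intersections are reduced, so $\partial\Pi := \sum[\Pi_i]$ is a reduced divisor.

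The core computation is on the interior. Over the open Richardson variety $\Xo_u^w$, the map $\pi$ is an isomorphism onto $\Pio_u^w$ (Lemma~\ref{L:WhyQ}), so I can transport the identification there. For $X_u^w$, which has a rational resolution, one knows (and it is recorded in the Appendix's setup and in \cite{BK}, e.g. the machinery around \cite[Theorem~1.3.14, Lemma~3.4.2]{BK}) that $\omega_{X_u^w}$ agrees with $\O_{X_u^w}(-\partial X_u^w)$ tensored with the restriction of $\omega_{G/B}$-type data; more precisely, the boundary divisor of a Richardson variety is anticanonical on its regular locus. I would then push this forward along $\pi$. Because $\pi_*\O_{X_u^w} = \O_{\Pi_u^w}$ and $R^i\pi_*\O_{X_u^w}=0$, and because $\pi$ carries the boundary Richardson hypersurfaces onto the $\Pi_i$ (with the non-$P$-Bruhat ones collapsing, by Proposition~\ref{P:EquivalenceClassClosed} and Corollary~\ref{C:NotInj}, to lower-dimensional strata and hence not contributing divisorially), the image of $-\partial X_u^w$ is $-\sum[\Pi_i]$ up to the contribution of the exceptional/collapsed locus, which has codimension $\ge 2$ in $\Pi$ and so does not affect a divisor-class computation on the normal variety $\Pi$.

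To make the pushforward of dualizing sheaves rigorous I would use the projection-formula/duality comparison for $\pi$: since $\pi$ is cohomologically trivial and $\Pi$ is normal and Cohen–Macaulay (Corollary~\ref{c:CohenMacaulay}), Grothendieck duality gives $\omega_{\Pi_u^w} \cong \pi_*\omega_{X_u^w}$ (this is exactly the type of statement in \cite[Lemma~3.4.2]{BK} used in the proof of Theorem~\ref{T:RatSing}). Combining with $\omega_{X_u^w}\big|_{reg} \cong \O(-\partial X_u^w)$ and pushing forward, and then restricting to $(\Pi_u^w)_{reg}$ where everything is invertible, yields $\omega_{\Pi_u^w}\big|_{reg}\cong \O(-\sum[\Pi_i])\big|_{reg}$, which is the assertion that $\sum[\Pi_i]$ is anticanonical in $\Pi_u^w$.

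The main obstacle I anticipate is the bookkeeping at the boundary: verifying that \emph{every} codimension-one piece of $\partial\Pi$ arises as $\pi$ of a codimension-one boundary stratum of $X_u^w$ with multiplicity one (no branching, no hidden ramification divisor), and conversely that the boundary Richardson hypersurfaces of $X_u^w$ whose equivalence classes drop dimension under $\pi$ map into codimension $\ge 2$. The first point follows from Lemma~\ref{L:WhyQ}'s injectivity and the stratum-by-stratum analysis there; the second is precisely the content of Proposition~\ref{P:EquivalenceClassClosed}/Corollary~\ref{C:NotInj}, combined with Corollary~\ref{C:ProjectionDimension} on dimensions. Once these two facts are in hand, the divisor-class identity is forced, since a Weil divisor class on a normal variety is determined away from codimension two.
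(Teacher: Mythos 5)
Your argument is essentially the paper's proof: both establish $\pi_*\omega_{X_u^w}=\omega_{\Pi_u^w}$ from the rational-resolution results (Theorem~\ref{T:RatSing}) and then push forward Brion's identity $\omega_{X_u^w}\cong\O_{X_u^w}(-D_X)$, where $D_X$ is the sum of the boundary Richardson hypersurfaces, matching boundary divisors under $\pi$. The only difference is that the paper normalizes the Richardson model so that $w\in W^P$, which makes $\pi^{-1}(D_\Pi)=D_X$ exactly and avoids any collapsed boundary components, whereas you keep a general model and dispose of the collapsed divisors by a codimension-two argument on the normal variety $\Pi$ --- a harmless variation of the same approach.
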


Note that the proof of this is quite easy in the case $\Pi = G/B$,
where (under the identification of $Pic(G/B)$ with $T$'s weight lattice)
the anticanonical class is the sum of the positive roots.
The relevant (non-projected) Richardson varieties are the
Schubert divisors and opposite Schubert divisors, adding up
to twice the sum of the fundamental weights, which matches
the sum of the positive roots.

\begin{proof}
We imitate the proof of \cite[Proposition 2.2.7(ii)]{Bri}.  Let $X_u^w$ be the Richardson model for $\Pi_u^w$ for which $w \in W^P$.  We will abbreviate these to $X$ and $\Pi$ when possible.
By Theorem~\ref{T:RatSing} and \cite[Proposition 2.2.5]{Bri}, $\pi_* \omega_{X} = \omega_{\Pi}$. 
By \cite[Theorem 4.2.1(i)]{Bri}, the divisor $D_X = \sum_{u \lessdot u' \leq w} [X_{u'}^w] + \sum_{u \leq w' \lessdot w} [X_{u}^{w'}]$ is  anticanonical in $X$, so that $\omega_X \simeq \O_X(-D)$.  

Let $D_\Pi$ be the divisor $\sum_{u \lessdot u' \leq_P w} [\Pi_{u'}^w] + \sum_{u \leq_P w' \lessdot w} [\Pi_{u}^{w'}]$.  By Proposition \ref{P:Rietsch}, these are exactly the projected Richardson hypersurfaces in $\Pi$, and furthermore, one has $\pi^{-1}(D_\Pi) = D_X$.  Thus $\omega_\Pi = \pi_*(\omega_X) = \pi_*(\O_X(-D_X)) = \O_\Pi(-D_\Pi)$.

\end{proof}

\begin{proof}[Proof of Theorem \ref{T:OnlySplit}]
  Let $\calY$ be the set of projected Richardson varieties.
  We know they are compatibly split by Corollary \ref{cor:pRsplit},
  and normal by Corollary \ref{C:Normal},
  with smooth interior by Corollary \ref{C:ProjectionDimension}.
  We know they form a stratification by \cite[Proposition 7.2]{Rie}.
  For assumption (3), we appeal to the $\Pi = G/P$ case of Lemma \ref{L:Hyp1}.
  We have verified the assumptions of Theorem \ref{thm:allsplit}.
\end{proof}

\junk{

We begin with a number of preparatory lemmas. 

\begin{lemma} \label{L:Sing}
Let $\Pi$ be a projected Richardson variety and let $x$ be a singular point of $\Pi$. Then there is a projected Richardson $\Pi'$, which is a hypersurface in $\Pi$, passing through $x$.
\end{lemma}

\begin{proof}
By Corollary \ref{C:ProjectionDimension}, $x$ cannot lie in the interior of $\Pi$. 
So there is some smaller projected Richardson $\Pi''$ containing $x$. By Corollary 6.4 of \cite{Wil}, the poset $Q(W,W_P)$ of projected Richardson varieties is graded, with $\Pi_{u}^w$ in rank $\ell(w) - \ell(u) = \dim \Pi_{u}^w$. So there is some $\Pi'$, with $\Pi'' \subseteq \Pi' \subset \Pi$ and $\dim \Pi' = \dim \Pi -1$. In particular, $\Pi'$ is a hypersurface in $\Pi$ containing $x$.
\end{proof}

\begin{proof}[Proof of Theorem~\ref{T:OnlySplit}]
Let $Z \subseteq G/P$ be a split variety, of dimension $d$. 
Let $\Pi$ be a projected Richardson containing $Z$ such that no smaller projected Richardson contains $Z$; we will show that $\Pi=Z$.

Since $\Pi$ is normal (Corollary~\ref{C:Normal}), we may apply \cite[Proposition 1.3.11 and Remark 1.3.12]{BK}.
Let $D$ be the divisor in $\Pi$ corresponding to the section of $\omega_{\Pi}^{-(p-1)}$ over $X_{reg}$ giving the splitting of $\Pi$.
We know that, if $H$ is a split hypersurface in $\Pi$, then $D$ vanishes to order $p-1$ on $H$.
In particular, $D$ vanishes to order $p-1$ on every projected Richardson hypersurface in $\Pi$.
But, by Lemma~\ref{L:Hyp1}, the sum of such hypersurfaces is anticanonical, so $D$ cannot vanish anywhere else.
Thus, $D = (p-1) \sum [\Pi']$,  where the sum is over all projected Richardson hypersurfaces in $\Pi$.

Now, suppose that $\Pi \neq Z$. Then, by \cite[Proposition (2.1)]{KumarMehta}, either $Z \subseteq D$, or $Z$ lies in the singular locus of $\Pi$.
By Lemma~\ref{L:Sing}, the singular locus of $\Pi$ is in $D$, so we have $Z \subseteq D$ either way. 
But $Z$ is a variety, so it must lie in some particular component of $D$.
This component is smaller than $\Pi$, contradicting our minimal choice of $\Pi$.
\end{proof}
}

\section{Shelling $\pi(\Delta([u,w]))$}\label{sec:shelling}
\subsection{Shellings}
Given a simplicial complex with maximal faces $\sigma_1$, $\sigma_2$, \dots, $\sigma_N$ all of dimension $d-1$, the ordering  $\sigma_1$, $\sigma_2$, \dots, $\sigma_N$ is called a \defn{shelling order} if, for each $i$, $\sigma_i \cap \bigcup_{j < i } \sigma_j$ is pure of dimension $d-2$.  A simplicial complex is called \defn{shellable} if its maximal faces can be put into a shelling order. For a (finite) poset $Q$, we let $\Delta(Q)$ denote the simplicial complex consisting of the chains in $Q$, called the \defn{order complex} of $Q$.  

A labeling of the Hasse diagram of a poset $Q$ by
some totally ordered set $\Lambda$ is called an \defn{$EL$-labeling}
if for any $x \leq y \in Q$:
\begin{enumerate}
\item
  there is a unique strictly-label-increasing saturated chain $C$
  from $x$ to $y$,
\item
  the sequence of labels in $C$ is $\Lambda$-lexicographically minimal amongst
  the labels of saturated chains from $x$ to $y$.
\end{enumerate}
If $Q$ has an $EL$-labeling then we say that $Q$ is 
\defn{$EL$-shellable}.  Dyer \cite[Proposition 4.3]{Dye} showed that every Bruhat order (and also its dual) is
$EL$-shellable.  (See also~\cite{BW}.)  This implies (\cite{Bjo}) that the order complex 
of the Bruhat order is shellable. 

%

In the following we will use $W$ to denote both a Coxeter group, and also its the Bruhat order poset.  We fix a parabolic subgroup $W_P \subset W$, and let $\pi:W \to W^P$ denote the natural projection.  Our aim in this section is to establish the following result:

\begin{theorem} \label{theorem shellable}
For any interval $[u,w]$ in $W$, the simplicial complex $\pi(\Delta([u,w]))$ is shellable. 
\end{theorem}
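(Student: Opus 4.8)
The plan is to produce an explicit shelling of $\pi(\Delta([u,w]))$ by transporting the EL-shelling of the Bruhat interval $[u,w]$ through the projection $\pi$. The key observation is that $\pi(\Delta([u,w]))$ is the order complex of the poset $\pi([u,w]) = \{\pi(v) : u \leq v \leq w\} \subseteq W^P$, so I first need to understand this image poset. I would describe it using the $P$-Bruhat order machinery of \S\ref{sec:Bruhat}: by Proposition~\ref{P:uniquelift}, for each coset $C$ meeting $[u,w]$ from above, $C_{\geq u}$ has a unique minimum, so the fibers of $\pi|_{[u,w]}$ are the intervals of the form $[z, \text{something}]$ within a coset, and $\pi([u,w])$ inherits a natural order. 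The first step, then, is to identify $\pi([u,w])$ with an interval $[u', w']_P$ (or a union of such) in $P$-Bruhat order, using Lemma~\ref{L:Shift} and Proposition~\ref{P:TopBottom} to normalize so that $w \in W^P$, whence $\pi([u,w]) = \{v \in W^P : u \leq_P v \leq w\}$ by Proposition~\ref{P:TopBottom} and Proposition~\ref{P:uniquelift}.

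Next I would build a shelling. There are two natural routes. The first: use Dyer's EL-labeling of $[u,w]$ directly — order the maximal chains of $[u,w]$ lexicographically by their label sequences (this is the shelling order induced by EL-shellability, per \cite{Bjo}), then push each maximal chain forward under $\pi$ and delete repetitions (a maximal chain of $[u,w]$ maps to a chain in $\pi([u,w])$ which is maximal iff it uses a $P$-covering at each step, i.e. changes coset each time; equivalently, it is the unique "coset-increasing" chain through its image). One then needs to check that the resulting order on the maximal faces of $\pi(\Delta([u,w]))$ is a genuine shelling, i.e. that when we add the $i$-th facet $\sigma_i$, its intersection with the previously-added facets is pure of codimension one. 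The second, cleaner route: construct directly an EL-labeling (or at least a recursive-coatom / CL-type labeling) of the poset $\pi([u,w])$ itself, using the $P$-Bruhat structure — reflection-order labels on $P$-covers, drawn from Dyer's reflection orders — and invoke the general fact that EL-shellable posets have shellable order complexes. Lemmata~\ref{L:Pleftweak} and \ref{L:lengthadd} are exactly the tools needed to show that $P$-covers behave well (descent sets of parabolic parts are nested, length-additivity propagates along chains), which is what makes such a labeling work.

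The main obstacle I anticipate is the verification that $\pi([u,w])$ is actually a \emph{ball} (as claimed in the abstract and Theorem~(3) of the introduction) and not merely shellable — but the statement of Theorem~\ref{theorem shellable} as written asks only for shellability, so for this theorem the hard part is purely the shelling itself. Within that, the genuine difficulty is controlling the fibers: distinct maximal chains of $[u,w]$ can collapse onto the same chain of $\pi([u,w])$, or onto chains that are non-maximal, and one must show that after deleting repeats the induced order still has the property that $\sigma_i \cap \bigcup_{j<i}\sigma_j$ is pure of dimension $d-2$. I expect this to require a careful analysis of which covering steps in a chain are "$P$-essential" (change coset) versus "$W_P$-internal", showing that the first point of descent in the EL-label of a chain in $[u,w]$ can always be taken to be a $P$-cover, so that the induced labeling of $\pi([u,w])$ still has unique increasing chains. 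This is precisely the "extensive analysis of $P$-Bruhat order" the abstract promises, and it is where Lemma~\ref{L:Pleftweak} and the uniqueness in Proposition~\ref{P:uniquelift} do the heavy lifting; I would organize it as a sequence of lemmas culminating in an explicit EL-labeling of $Q(W,W_P)$-intervals, from which Theorem~\ref{theorem shellable} follows by \cite{Bjo}.
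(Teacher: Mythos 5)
Your first route is, in outline, the strategy the paper actually follows (facets correspond to lifted chains, ordered lexicographically via one of Dyer's reflection orders), but as written the proposal has two genuine gaps. The first is your ``key observation'' that $\pi(\Delta([u,w]))$ is the order complex of the image poset $\pi([u,w])$: this is asserted without proof, and the paper never uses or claims it. Faces of $\pi(\Delta([u,w]))$ are by definition images of chains in $[u,w]$, and a chain in the image poset is not visibly liftable to a chain in $[u,w]$ --- the minimal lifts supplied by Proposition~\ref{P:uniquelift} stay above $u$ but are not automatically below $w$. The paper instead proves that the facets are in bijection with saturated $P$-Bruhat chains (Proposition~\ref{prop lifting}) and explicitly describes the complex as $\Delta([u,v]_P)$ \emph{with identifications of lower-dimensional faces}, warning that without those identifications the order complex is in general not shellable \cite[\S B.7]{BS}. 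Consequently your ``cleaner'' second route (EL-label the poset $\pi([u,w])$ and invoke Bj\"orner) does not prove the theorem unless you first establish the unproven identification of complexes, and even then you would still owe an EL-labeling of $\pi([u,w])$, which is nowhere constructed; general EL-machinery cannot simply be quoted here.

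The second, more serious, gap is that the heart of the proof --- the verification that the proposed facet order is a shelling --- is only anticipated, not carried out. The paper fixes a reflection order $\prec$ in which every reflection of $W_P$ comes \emph{after} every reflection outside $W_P$ (Proposition~\ref{prop P last}); this specific choice is what makes both the facet/chain bijection and the shelling argument work, and it is absent from your proposal. Given two $P$-Bruhat chains $x_\bullet \prec_{\lex} y_\bullet$, the paper produces the required earlier facet by showing a suitable segment of $y_\bullet$ cannot be $\prec$-increasing (via the increasing chain into a coset, Proposition~\ref{prop lex inc}), flipping a descent using the diamond property (Proposition~\ref{prop diamond}), and then \emph{iterating} such flips when a flip creates a step inside a single $W_P$-coset, with termination guaranteed because $v\in W^P$. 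Your sketch (``show the first point of descent can be taken to be a $P$-cover, so the induced labeling still has unique increasing chains'') is not this argument and it is not clear it can be made to work: lexicographically minimal chains of $[u,w]$ need not be $P$-Bruhat chains, and a single flip can leave the class of $P$-Bruhat chains, which is exactly the difficulty the iteration handles. Finally, a smaller point: for a general interval $[u,w]$ the relation $u\le_P w$ may fail, so the normalization to $v\in W^P$ requires Lemma~\ref{L:downDemazure} (the downward Demazure product) together with Lemma~\ref{L:identical}; Lemma~\ref{L:Shift} and Proposition~\ref{P:TopBottom} alone, as invoked in your first step, only apply once $u\le_P w$ is known.
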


Note that $\pi(\Delta([u,w]))$ is a simplicial complex on $W^P$.  Our
intended application is to the case where $G/P$ is minuscule, as
defined in \S \ref{sec:intro}.  Our proof, however, is valid for all
$(W, W_P)$, even if $W$ is infinite or non-crystallographic.

\subsection{Some preliminaries}
We will need several results of Dyer on reflection orders. Let $V$ denote the reflection representation of $W$ and $T$ denote the set of reflections in $W$. To each reflection $t$ is associated a positive root $\beta_t$ in $V$.\footnote{We will only care about roots up to positive rescaling, so we don't need to discuss any subtleties about the choice of Cartan matrix, or the definition of non-crystallographic root systems. If the reader wants a definite choice, normalize all roots to have length $\sqrt{2}$.} 
Let $H$ be a two-dimensional subspace of $V$ containing at least two positive roots. Then there is a unique pair of reflections, $p$ and $q$, such that every positive root in $H$ is in the positive span of $\beta_p$ and $\beta_q$. The positive roots in $V$ will correspond to the reflections $p$, $pqp$, $pqpqp$, $pqpqpqp$, \dots, $qpqpqpq$, $qpqpq$, $qpq$, $q$. This may either be an infinite sequence or a finite one. (In the latter case, $(pq)^m=1$ and the sequence has $m$ terms.) A \defn{reflection order} is a total ordering $\prec$ on $T$ such that, for every $p$ and $q$ as above, we either have $p \prec pqp \prec pqpqp \prec \ldots \prec qpq \prec q$, or vice versa.

We need the following results of Dyer:

\begin{prop}[{\cite[Proposition 2.3]{Dye}}] \label{prop P last}
There is a reflection ordering such that any reflection in $W_P$ comes after any reflection not in $W_P$.  There is a reflection ordering such that any reflection in $W_P$ comes before any reflection not in $W_P$.
\end{prop}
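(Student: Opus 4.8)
The plan is to construct the reflection order by partitioning $T$ into $T_P := T \cap W_P$ and $T \setminus T_P$, and exhibiting a reflection order on each piece, then concatenating. The key geometric fact I would use is that $W_P$ is itself a Coxeter group with its own reflection representation $V_P \subseteq V$ (the span of the simple roots indexed by $P$), and that $T_P$ is exactly the set of reflections whose roots lie in $V_P$. So first I would fix an arbitrary reflection order $\prec_1$ on the reflections of $W_P$ (these exist for any Coxeter group, by Dyer's general construction — total orders on $T$ compatible with the ``rank-two'' condition arise e.g. from generic linear functionals, or from any reduced-word/inversion-sequence limit), and an arbitrary reflection order $\prec_2$ on $T \setminus T_P$ viewed inside the full system. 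Then I would define $\prec$ on all of $T$ by declaring everything in $T \setminus T_P$ to precede everything in $T_P$, with $\prec_2$ used inside the first block and $\prec_1$ inside the second; the reverse choice gives the second claimed order.

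The main thing to check is that this concatenation is genuinely a reflection order, i.e.\ that the rank-two condition holds for every $2$-dimensional subspace $H \subseteq V$ containing at least two positive roots. The three reflections $p, pqp, \dots$ associated to such an $H$ either all lie in $W_P$, or all lie outside $W_P$, or the situation is mixed. If $H \subseteq V_P$ then all the associated reflections are in $T_P$ and the condition for $\prec$ restricted to that block is exactly the condition for $\prec_1$, which holds by assumption. The crucial claim is: \emph{if $H \not\subseteq V_P$, then none of the reflections in the sequence for $H$ lie in $W_P$}. This is because a reflection $t$ lies in $W_P$ iff $\beta_t \in V_P$; if two of the roots in $H$ were in $V_P$ they would span $H$ (being linearly independent — distinct positive roots in a rank-two subspace are not proportional), forcing $H \subseteq V_P$, contrary to hypothesis. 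Hence $H \not\subseteq V_P$ implies at most one root of $H$ lies in $V_P$; but the sequence $p, pqp, \dots, q$ for $H$ is symmetric, and if exactly one of its reflections were in $W_P$ one checks it cannot be an interior term (an interior term $r$ satisfies $\beta_r \in \mathrm{span}_{>0}(\beta_p,\beta_q)$ with both endpoints present, and $r \in W_P$ together with, say, $p$ or $q$ also in $W_P$ would again give two roots in $V_P$) — and a single endpoint being a reflection of $W_P$ would mean $\beta_p$ (or $\beta_q$) $\in V_P$ while the rest are not, which is incompatible with all the others being nonnegative combinations of $\beta_p,\beta_q$ unless we revisit the count. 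So in fact for $H \not\subseteq V_P$ the entire sequence lies in $T \setminus T_P$, and there the condition for $\prec$ is exactly the condition for $\prec_2$. In all cases the rank-two condition for $\prec$ reduces to that for $\prec_1$ or $\prec_2$, so $\prec$ is a reflection order, and by construction every reflection in $W_P$ comes after every reflection not in $W_P$.

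The only genuine obstacle is the bookkeeping in the ``mixed'' case of the previous paragraph — making airtight the claim that a rank-two subspace not contained in $V_P$ contributes no reflections of $W_P$ at all. I would handle this cleanly by the linear-algebra observation that $T_P = \{ t \in T : \beta_t \in V_P\}$ (which itself deserves a one-line justification: $V_P$ is $W_P$-stable, the reflections generating $W_P$ have roots in $V_P$, and conversely a reflection with root in $V_P$ acts trivially on $V_P^\perp$-complement data and lies in the reflection subgroup generated by $W_P$'s roots — this is Dyer's theory of reflection subgroups), combined with the fact that any two of the roots $\beta_p, \beta_{pqp}, \dots, \beta_q$ attached to $H$ span $H$. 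Given those two facts, ``$H$ meets $V_P$ in $\geq 2$ of its roots $\iff H \subseteq V_P$ $\iff$ all of $H$'s reflections lie in $T_P$'' is immediate, which is exactly the dichotomy needed. For the existence of $\prec_1$ and $\prec_2$ at all, I would simply cite the general existence of reflection orders for arbitrary Coxeter groups (Dyer, \cite{Dye}), applied to $W_P$ and — for $\prec_2$ — obtained by restricting any reflection order on $W$ to the subset $T \setminus T_P$, which is automatically a reflection order on that subset since the rank-two condition is inherited.
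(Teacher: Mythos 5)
The paper offers no proof of this proposition -- it is quoted from Dyer -- so your argument has to stand on its own, and it does not: the central claim, that a rank-two subspace $H \not\subseteq V_P$ contributes \emph{no} reflections of $W_P$ to its string, is false. Take $W = S_3$, $W_P = \langle s_1 \rangle$, and $H = V$: the string is $s_1,\, s_1s_2s_1,\, s_2$, and its endpoint $s_1$ lies in $W_P$. What is true is weaker: such an $H$ contains at most one positive root in $V_P$, and if it contains one, that root must be an endpoint $\beta_p$ or $\beta_q$. Indeed, if an interior root $a\beta_p + b\beta_q$ with $a,b>0$ lay in $V_P$, then for every simple root $\alpha_i$ with $i \notin P$ the (nonnegative) $\alpha_i$-coefficients of $\beta_p$ and $\beta_q$ would be forced to vanish, giving $\beta_p,\beta_q \in V_P$ and hence $H \subseteq V_P$. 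Your attempt to rule out the interior case is circular (it assumes an endpoint already lies in $W_P$), and the conclusion you draw -- that mixed strings do not occur -- is exactly what the $S_3$ example refutes.

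This is not repairable by bookkeeping alone, because once mixed strings exist you cannot concatenate \emph{arbitrary} orders on the two blocks. In the same $S_3$ example, take $\prec_2$ on $T \setminus T_P = \{s_2,\, s_1s_2s_1\}$ to be $s_1s_2s_1 \prec_2 s_2$; this is the restriction of the genuine reflection order $s_1 \prec s_1s_2s_1 \prec s_2$, so it passes every condition you impose on $\prec_2$, yet concatenating with $s_1$ placed last gives $s_1s_2s_1 \prec s_2 \prec s_1$, which is not monotone along the unique string and hence is not a reflection order. The trouble is that in a mixed string the $W_P$-endpoint is forced to the top (or bottom) of your order, and this dictates the orientation of the non-$W_P$ part of that string -- a constraint coupling the two blocks that an arbitrary $\prec_2$ need not satisfy. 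A correct proof must build the order globally: for finite $W$, take the inversion order attached to a reduced word for $w_0$ of the form (reduced word for $w_0^P$)(reduced word for the longest element of $W_P$), which lists the reflections outside $W_P$ first and those of $W_P$ last; in general one invokes Dyer's theory of reflection subgroups and initial sections of reflection orders, i.e.\ precisely the result the paper cites. Your observation that $\Phi_P^+$ and its complement are each closed under positive combinations is the right germ of that argument, but by itself it does not validate the block-concatenation scheme as you have set it up.
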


\begin{prop}[{\cite[Lemma 4.1]{Dye}}] \label{prop diamond}
Let $a \lessdot ap, ar \lessdot apq = ars$ be a length-two interval in $W$ and let $\prec$ be any reflection order. Then either $p, s \prec q, r$ or $q, r \prec p, s$.
\end{prop}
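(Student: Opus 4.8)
The plan is to reduce to the dihedral (rank-two) situation and argue there.

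First I record the algebraic content of the hypotheses. Write $\beta_t$ for the positive root attached to a reflection $t$, and regard reflections as linear maps on $V$. From $apq=ars$ we get $pq=rs$ in $W$, i.e. $s_{\beta_p}s_{\beta_q}=s_{\beta_r}s_{\beta_s}$. Since the covers are strict, $p\neq q$ and $r\neq s$, so in each pair the two roots are non-proportional and neither product is the identity. For reflections in non-proportional roots $\mu,\nu$ one has $\mathrm{Im}\bigl(s_\mu s_\nu-\mathrm{id}\bigr)=\mathrm{span}(\mu,\nu)$; applying this to each side of $pq=rs$ gives $\mathrm{span}(\beta_p,\beta_q)=\mathrm{span}(\beta_r,\beta_s)=:H$, a single $2$-plane. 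Thus $p,q,r,s$ all have roots in $H$, and $H$ contains the two distinct positive roots $\beta_p,\beta_q$. By the definition of a reflection order, the restriction of $\prec$ to the reflections with root in $H$ is therefore the ``angular'' linear order of the positive roots in $H$, or its reverse; fix this angular order and write $\tau_1\prec\tau_2\prec\cdots$ for it, a (finite or infinite) list containing $p,q,r,s$.

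Next I bring in the Bruhat data. The cover $a\lessdot ap$ forces $a(\beta_p)\in\Phi^+$; likewise $a(\beta_r)\in\Phi^+$, while $ap\lessdot apq$ and $ar\lessdot apq$ give $a\bigl(s_{\beta_p}(\beta_q)\bigr)\in\Phi^+$ and $a\bigl(s_{\beta_r}(\beta_s)\bigr)\in\Phi^+$. The four vectors $\beta_p,\ \beta_r,\ s_{\beta_p}(\beta_q),\ s_{\beta_r}(\beta_s)$ all lie in $H$, and $a$ carries them into $\Phi^+$, hence into the set of positive roots lying in the $2$-plane $a(H)$ --- which, for any $2$-plane, lies in an open half-plane of that plane. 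Pulling back by the isometry $a$, these four vectors lie in a common open half-plane of $H$. Feeding this ``width less than $\pi$'' constraint into the rotation identity $s_{\beta_p}s_{\beta_q}=s_{\beta_r}s_{\beta_s}$ pins down the positions of $\beta_p,\beta_q,\beta_r,\beta_s$ among $\tau_1,\tau_2,\dots$: the indices of $\{p,s\}$ are all less than the indices of $\{q,r\}$, or all greater. This planar bookkeeping is the main obstacle: the half-plane constraint must be used essentially, since abstract solutions of $pq=rs$ with interleaved indices do exist. (Equivalently one may pass to the dihedral reflection subgroup generated by the roots in $H$, transport the length-two interval into it, and dispatch by hand the short list of length-two intervals in a dihedral group.)

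Finally, whichever of the two possibilities holds for the restriction of $\prec$ to the reflections with root in $H$ --- the chosen angular order or its reverse --- exactly one of $\{p,s\}$, $\{q,r\}$ consists of $\prec$-smaller reflections than the other, which is precisely the assertion that either $p,s\prec q,r$ or $q,r\prec p,s$.
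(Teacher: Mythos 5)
The paper itself gives no proof of this proposition—it is quoted verbatim from Dyer's Lemma 4.1—so the only question is whether your argument stands on its own, and it has two genuine gaps. First, the linear-algebra step you use to get coplanarity is false in the generality required. The identity $\operatorname{Im}(s_\mu s_\nu-\mathrm{id})=\operatorname{span}(\mu,\nu)$ holds when the restriction of the invariant form to the relevant data is nondegenerate (in particular for finite $W$), but it fails whenever $\operatorname{span}(\mu,\nu)$ meets the radical of $B$: if $B(\mu,\nu)=-1$ (an infinite dihedral pair of roots), then $s_\mu s_\nu$ fixes the isotropic vector $\mu+\nu$, and, e.g., in the standard representation of the infinite dihedral group, or for $\mu=\alpha$, $\nu=\delta-\alpha$ in an affine group, $\operatorname{Im}(s_\mu s_\nu-\mathrm{id})$ is only the line through $\mu+\nu$. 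So from $pq=rs$ you cannot conclude $\operatorname{span}(\beta_p,\beta_q)=\operatorname{span}(\beta_r,\beta_s)$ by comparing images of $x-\mathrm{id}$. The coplanarity statement is in fact true for arbitrary Coxeter groups, but it needs a different argument (for instance: reduce to the rank-$\le 3$ reflection subgroup $\langle p,q,r\rangle$, which contains $s=r\cdot pq$, and treat the definite, semidefinite and signature-$(2,1)$ cases separately, or invoke Dyer's theory of dihedral reflection subgroups). This matters here because the paper insists the shelling results hold for all $(W,W_P)$, including infinite $W$.

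Second, and more seriously, the heart of the lemma is the step you yourself label ``the main obstacle'' and then do not carry out: deducing from the positivity conditions $a(\beta_p),\,a(\beta_r),\,a(s_{\beta_p}(\beta_q)),\,a(s_{\beta_r}(\beta_s))\in\Phi^+$ (your half-plane constraint) that, in the angular order on the positive roots of $H$, the pair $\{p,s\}$ lies entirely before or entirely after $\{q,r\}$. ``Feeding this constraint in \dots pins down the positions'' is an assertion, not an argument, and since, as you note, interleaved solutions of $pq=rs$ do exist in the abstract, this bookkeeping is exactly where the content of the lemma lives. The parenthetical alternative—pass to the dihedral reflection subgroup attached to $H$ and check length-two intervals by hand—conceals the same missing ingredient: the covers $a\lessdot ap$, $ap\lessdot apq$, etc.\ are covers for the length function of $W$, and converting the diamond into a statement internal to the dihedral subgroup (so that a finite rank-two check suffices) requires Dyer's results on how Bruhat order and the Bruhat graph restrict to cosets of reflection subgroups, or an equivalent direct analysis. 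Until one of these routes is actually executed, what you have is a (partly flawed) reduction to a single plane $H$ plus an unproved claim that is precisely Dyer's lemma.
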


Let $c_0 \lessdot c_1 \lessdot \ldots \lessdot c_{\ell}$ be any saturated chain in $W$. Then we will say that the reflection sequence of this chain is $(c_0^{-1} c_1, c_1^{-1} c_2, \ldots, c_{\ell-1}^{-1} c_{\ell})$. 

We write $\prec_{lex}$ for the lexicographic order induced by $\prec$.

\begin{prop}[{\cite[Proposition 4.3]{Dye}}] \label{prop lex inc}
Let $[a,b]$ be any interval in $W$ and let $\prec$ be any reflection order. Then there is precisely one chain $a= c_0 \lessdot c_1 \lessdot \ldots \lessdot c_{\ell} =b$ whose associated reflection sequence $(\lambda_1, \lambda_2, \ldots, \lambda_{\ell})$ satisfies $\lambda_1 \prec \lambda_2 \prec \ldots \prec \lambda_{\ell}$. Moreover, if we have any other chain from $a$ to $b$, with associated reflection sequence $(\mu_1, \mu_2, \ldots, \mu_{\ell})$, then $(\lambda_1, \ldots, \lambda_{\ell})  \prec_{lex} (\mu_1, \mu_2, \ldots, \mu_{\ell})$.
\end{prop}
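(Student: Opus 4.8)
This is a theorem of Dyer, and the proof I would give assembles the pieces already in place. The assertion is precisely that labelling each cover $c \lessdot c'$ of $[a,b]$ by the reflection $c^{-1}c' \in T$, with $T$ totally ordered by $\prec$, yields an $EL$-labelling in the sense of \S\ref{sec:shelling}. I would establish the existence, uniqueness, and lexicographic minimality of the increasing maximal chain simultaneously, by induction on $\ell := \ell(b)-\ell(a)$; for $\ell \le 1$ there is nothing to prove. For the inductive step, let $\Lambda$ be the set of labels $a^{-1}c$ of the atoms $a \lessdot c \le b$ of $[a,b]$. Since Bruhat order is graded and every length-two interval in it is a diamond, $[a,b]$ has at least two atoms once $\ell \ge 2$; put $\lambda_1 := \min_\prec \Lambda$, realised at the atom $c_1 := a\lambda_1$.

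By the inductive hypothesis, $[c_1,b]$ has a unique increasing maximal chain $c_1 \lessdot c_2 \lessdot \cdots \lessdot b$, say with reflection sequence $(\mu_2,\dots,\mu_\ell)$, and this chain is lexicographically least in $[c_1,b]$. Prepending the edge $a \lessdot c_1$ produces an increasing chain of $[a,b]$ once we know $\lambda_1 \prec \mu_2$, and this is the point at which Proposition~\ref{prop diamond} is used: the length-two interval $[a,c_2]$ is a diamond, with atoms $c_1$ and a second atom $e := a\rho$, and its four edges carry the labels $\lambda_1$ and $\mu_2$ on the $c_1$-side and $\rho$ and $\sigma$ on the $e$-side. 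Proposition~\ref{prop diamond} forces $\{\lambda_1,\sigma\}\prec\{\mu_2,\rho\}$ or $\{\mu_2,\rho\}\prec\{\lambda_1,\sigma\}$; as $e \le c_2 \le b$ we have $\rho\in\Lambda$ and $\rho \ne \lambda_1$, hence $\lambda_1 \prec \rho$, which rules out the second alternative and leaves $\lambda_1 \prec \mu_2$. So an increasing maximal chain of $[a,b]$ exists. Its lexicographic minimality is then immediate: any maximal chain of $[a,b]$ has first label in $\Lambda$ and so $\succeq \lambda_1$; if this inequality is strict we are done, while if it is an equality the chain begins at $c_1$ and we invoke the inductive minimality statement inside $[c_1,b]$.

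The real content is uniqueness, which I would reduce to the claim $(\star)$ that every increasing maximal chain of $[a,b]$ begins with the edge $a \lessdot c_1$ of globally least label $\lambda_1$; granting $(\star)$, any two increasing chains share their first edge, and the inductive hypothesis on $[c_1,b]$ forces them to coincide. For $(\star)$ I would take an increasing chain $a \lessdot c' \lessdot \cdots \lessdot b$ whose first label is not $\lambda_1$ and use the standard lifting property of Bruhat order to ``climb'' $c_1$ along this chain: one locates the first vertex $c'_j$ with $c_1 \le c'_j$, checks that $j \ge 2$ and $c_1 \not\le c'_{j-1}$, and then the lifting property produces an element $c_1 t \lessdot c_1$ lying below $c'_{j-1}$, where $t$ is the reflection labelling the edge $c'_{j-1}\lessdot c'_j$; in the favourable case $c_1 t = a$, so $t = \lambda_1$, and then the strict increase of labels gives (first label) $\prec t = \lambda_1$, contradicting (first label) $\in \Lambda$ since $j \ge 2$. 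The main obstacle is exactly in closing this step: ruling out the possibility that $c_1 t$ is some other element covered by $c_1$ rather than $a$ itself needs a further descent argument, and it is here that Dyer's own proof brings in additional structure --- the recursion for Kazhdan--Lusztig $R$-polynomials --- in place of the naive climbing argument. All the rest is routine bookkeeping with the induction.
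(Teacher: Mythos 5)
There is a genuine gap, and you identify it yourself: the uniqueness of the increasing chain is never proved. Your existence argument (take the atom of $\prec$-least label $\lambda_1$, recurse on $[c_1,b]$, and use Proposition~\ref{prop diamond} on the length-two interval $[a,c_2]$ to get $\lambda_1\prec\mu_2$) is fine, as is the deduction of strict lexicographic minimality of the chain so constructed. But the proposition asserts that this is the \emph{only} increasing chain, and that is the half of the statement the paper actually leans on (e.g.\ in the proofs of Propositions \ref{P:uniquelift} and \ref{prop lifting}, where two increasing chains with the same endpoints must coincide). Your reduction of uniqueness to the claim $(\star)$ is reasonable, but the ``climbing'' step does not close: the edge $c'_{j-1}\lessdot c'_j$ is a cover by an \emph{arbitrary} reflection $t$, and the lifting property you invoke is a statement about simple reflections (descents); for general $t$ it does not produce an element below $c'_{j-1}$ of the required form, and in particular nothing forces $c_1t=a$ rather than some other coatom of $c_1$. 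Since you explicitly concede that closing this requires extra structure (Dyer uses the $R$-polynomial recursion and reflection-subgroup techniques), the proposal does not prove the proposition as stated.

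For context: the paper itself offers no proof of this statement --- it is quoted verbatim from Dyer \cite[Proposition 4.3]{Dye} --- so the honest options are either to cite Dyer, as the paper does, or to supply a complete argument; a write-up consisting of your existence/lex-minimality steps plus an acknowledged hole at uniqueness is not an acceptable substitute. If you want a self-contained proof, the missing ingredient is precisely Dyer's machinery (or an equivalent argument via dihedral reflection subgroups applied to the two candidate first edges), not further bookkeeping with the diamond lemma.
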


We call the unique chain $c_{\bullet}$ the \defn{increasing chain}, or the \defn{lexicographically minimal chain}, depending on which of its properties we want to emphasize.  From now on, $\prec$ will denote a fixed reflection order which puts reflections in $W_P$ after reflections not in $W_P$, as in Proposition~\ref{prop P last}.  Also let $\prec'$ denote a reflection order which puts reflections in $W_P$ before reflections not in $W_P$.

We also need the following easy lemmas about parabolic cosets:

\begin{lem} \label{L:par interval}
If $xW_P = yW_P$, then the whole interval $[x,y]$ lies in the coset $xW_P$.
\end{lem}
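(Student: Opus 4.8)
The statement is: if $xW_P = yW_P$, then $[x,y] \subseteq xW_P$. Since $x$ and $y$ lie in the same coset $C := xW_P$, every element $v$ with $x \leq v \leq y$ must also lie in $C$; this is the assertion. The plan is to reduce to a statement about projections. Recall $\pi = \pi_P : W \to W^P$ is the projection sending $w$ to $w^P$, and that $\pi$ is order-preserving for the Bruhat order: $u \leq v$ implies $\pi(u) \leq \pi(v)$ (this is the standard lifting property, e.g. \cite[Proposition 5.9]{Hum}, and is implicit throughout \S\ref{sec:Bruhat}). Since $xW_P = yW_P$ we have $\pi(x) = \pi(y)$; call this common element $z \in W^P$.

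First I would observe that for any $v$ with $x \leq v \leq y$, applying $\pi$ and using monotonicity gives $z = \pi(x) \leq \pi(v) \leq \pi(y) = z$, hence $\pi(v) = z$. But $\pi(v) = z$ says exactly that $v^P = z$, i.e. $v \in z W_P = xW_P$. This immediately gives $v \in xW_P$ as required, so in fact $[x,y] \subseteq xW_P$.

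The only real content is the monotonicity of $\pi$ with respect to Bruhat order, which is standard: if $u \lessdot v$ is a cover, then either $uW_P = vW_P$, in which case $\pi(u) = \pi(v)$, or $uW_P \neq vW_P$, in which case $u \lessdot_P v$ and $\pi(u) \lessdot \pi(v)$ in $W^P$ — in either case $\pi(u) \leq \pi(v)$; chaining covers along a saturated chain from $x$ to $v$ gives $\pi(x) \leq \pi(v)$, and similarly $\pi(v) \leq \pi(y)$. This is precisely the dichotomy used to define $P$-covers in \S\ref{sec:Bruhat}, so nothing new is needed.

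\textbf{Main obstacle.} There is essentially no obstacle: the lemma is a two-line consequence of the monotonicity of the parabolic projection under Bruhat order, which is a classical fact (and the sole ingredient). The only thing to be careful about is citing the right form of the lifting property — that Bruhat covers project to equalities or covers in $W/W_P$ — which is exactly what underlies the definition of $\leq_P$ given earlier in the paper. So the proof is: apply $\pi$, sandwich, conclude $\pi(v) = \pi(x)$, hence $v \in xW_P$.
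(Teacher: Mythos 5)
Your proposal is correct and is essentially identical to the paper's proof: the paper also applies the parabolic projection, uses the monotonicity $x \leq z \leq y \Rightarrow x^P \leq z^P \leq y^P$, and concludes $z^P = x^P$ from $x^P = y^P$. Your extra paragraph justifying the monotonicity via the cover dichotomy is fine but not needed beyond citing the standard fact.
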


\begin{proof}
If $x \leq z \leq y$ then $x^P \leq z^P \leq y^P$. Since $x^P=y^P$, we have $z^P=x^P$.
\end{proof}

Recall that the Demazure product $\circ$ was defined in \S \ref{sec:proj}.

\begin{lem} \label{L:upper bound}
If $xW_P=yW_P$, then there exists $z \in xW_P$ with $x, y \leq z$.
\end{lem}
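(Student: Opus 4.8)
The plan is to exhibit an explicit common upper bound, built from the parabolic factorizations of $x$ and $y$ together with the Demazure product. Since $xW_P = yW_P$, the minimal coset representatives agree: writing $x = ax_P$ and $y = ay_P$ with $a \in W^P$ and $x_P, y_P \in W_P$, we have $x^P = y^P = a$. Put $m := x_P \circ y_P$, the Demazure product (computed in $W$; as all simple reflections occurring lie in $W_P$, this may equally be computed inside $W_P$), and set $z := am$. Because $a \in W^P$, the product $am$ is length-additive, and since $m \in W_P$ we get $z \in aW_P = xW_P$, which is the asserted coset condition.

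It then remains to show $x \le z$ and $y \le z$ in Bruhat order. For this I use the two basic monotonicity facts for the Demazure product: $a \circ b \ge a$ and $a \circ b \ge b$ for all $a,b \in W$. The first is immediate from the recursive definition, since each step satisfies $w \circ s \ge w$; the second follows by an easy induction on $\ell(b)$ from the lifting property of Bruhat order (\cite[Proposition 5.9]{Hum}), taking a reduced word for $b$ that ends in a descent. Granting these, $x_P \le m$ and $y_P \le m$. Now fix a reduced word for $a$ and a reduced word for $m$; their concatenation is a reduced word for $z = am$ by length-additivity. By the subword characterization of Bruhat order, $x_P \le m$ means some subword of our reduced word for $m$ is a reduced word for $x_P$, and prepending the reduced word for $a$ realizes $x = a x_P$ as a subword of our reduced word for $z$; hence $x \le z$. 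The identical argument using $y_P \le m$ gives $y \le z$, completing the proof.

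The only step requiring any care is the inequality $m = x_P \circ y_P \ge y_P$, the ``right-hand factor'' bound: the recursion makes only the left-hand bound transparent, and one reaches the right-hand bound via the lifting property as indicated (equivalently, one may invoke the standard description of $u \circ v$ as the Bruhat-maximum of $\{u'v' : u' \le u,\ v' \le v\}$, which is $\ge v$ by taking $u'=e$). I note that this construction imposes no finiteness hypothesis on $W_P$; when $W_P$ does happen to be finite one may instead simply take $z = a w_{P,0}$ with $w_{P,0}$ the longest element of $W_P$, but the Demazure-product form avoids that special case and matches the generality in which the surrounding section works.
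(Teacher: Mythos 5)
Your proof is correct and is essentially the paper's own argument: the paper also writes $x=mx'$, $y=my'$ with $m$ the minimal coset representative and takes $z=m(x'\circ y')$, leaving the verification implicit. You have simply supplied the details (length-additivity, $u\circ v\geq u,v$, and the subword criterion) that the paper omits.
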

\begin{proof}
Let $m$ be the minimal element of $xP$ and write $x=mx'$ and $y = my'$. Then $z=m(x' \circ y')$ has the required property.
\end{proof}

\begin{proof}[Proof of Proposition \ref{P:uniquelift}]
Let $z, z' \in C_{\geq x}$ be two minimal elements.  By Lemma \ref{L:upper bound} there exists $y \in C_{\geq x}$ which is an upper bound for $z$ and $z'$.  We shall prove that $z = z'$ using induction on $\ell(y) - \max(\ell(z),\ell(z'))$.  If $\ell(y) = \ell(z)$ (resp. $\ell(y) = \ell(z')$), then it is clear that $z = z'$, so the base case is trivial.  We now suppose that $\ell(y) > \ell(z),\ell(z')$.

Let $c = (x \lessdot \cdots \lessdot z \lessdot \cdots \lessdot w \lessdot y)$ and $c' = (x  \lessdot \cdots \lessdot z' \lessdot \cdots \lessdot w' \lessdot y)$ be two saturated chains from $x$ to $y$ going through $z$ and $z'$.  We may, and will, assume that $w$ and $w'$ are both in $C$.  We claim that $c$ can be changed to $c'$ via a sequence of saturated chains such that at each step only one element of the chain changes.  Furthermore, we will choose such a sequence of chains
\begin{align*}
c = c_0& = (x \lessdot \cdots \lessdot z \lessdot \cdots \lessdot u_0 \lessdot w = w_0 \lessdot y) \\ 
c_1&= (x \lessdot\cdots \cdots \cdots \cdots \lessdot u_1 \lessdot w = w_1 \lessdot y) \\
c_2&= (x \lessdot \cdots \cdots \cdots \cdots\lessdot u_2 \lessdot w = w_2 \lessdot y)
\\
\cdots \\
c'=c_N&=  (x  \lessdot \cdots \lessdot z' \lessdot \cdots \lessdot u_N \lessdot w'=w_N \lessdot y)
\end{align*}
where $w_i \in C$.  To see this is possible we use Dyer's theorem \cite[Proposition 4.3]{Dye} that $\prec'$ gives a shelling order on $[x,y]$ where now we order maximal chains using $\prec'$ lexicographically from the top of the chain (see \cite[(4.4)]{Dye}).  Dyer's result implies that we may find saturated chains $c_i$ from $x$ to $y$ so that
$$
c \succ'_{\lex} c_1 \succ'_{\lex} c_2 \cdots \succ'_{\lex} c_r \prec'_{\lex} c_{r+1} \prec'_{\lex} \cdots \prec'_{lex} c'
$$
where $\prec'_{\lex}$ is the lexicographic order on chains induced by $\prec'$ starting from the top of the chain, and $c_i$ and $c_{i+1}$ differ by one element.  It follows from the definition of $\prec'$ that these $c_i$ have the stated property.

Now let us suppose that $w_i \neq w_{i+1}$ for some $0 \leq i \leq N-1$ so that the end of $c_i$ and $c_{i+1}$ look like $u \lessdot w_i \lessdot y$ and $u \lessdot w_{i+1} \lessdot y$ respectively.  Applying Proposition \ref{prop diamond} with the reflection order $\prec$ one deduces that $u \in C$.  It follows that we may assume that $\ell(y) \geq \max(\ell(z),\ell(z')) + 2$, and so we shall now in addition assume that all the $u_i$ lie in $C$.  Let $z_i \in C_{\geq x}$ be a minimum element below $u_i$, such that $z = z_0$ and $z' = z_N$.  For each $i$, either (1) $u_i = u_{i+1}$ which implies that $u= u_i = u_{i+1}$ is an upper bound for $z_i$ and $z_{i+1}$, or (2) $v_i = v_{i+1}$, which implies that $v = v_i = v_{i+1}$ is an upper bound for $z_i$ and $z_{i+1}$.  In either case, by induction we deduce that $z_i = z_{i+1}$, and thus $z = z_0 = z_1 = \cdots = z_N = z'$. 

For the final statement, we consider the increasing saturated chain from $x$ to the minimum $z \in C_{\geq x}$ under the order $\prec$.  By the minimality of $z$, this chain does not use any reflections in $W_P$, and hence is a saturated $P$-Bruhat order chain.  Thus $x \leq_P z$.
\end{proof}

\begin{lemma}\label{L:identical}
Suppose $[u,v]_P \sim [x,y]_P$.  Then the complexes $\pi(\Delta([u,v]_P))$ and $\pi(\Delta([x,y]_P))$ are identical.  The complexes $\pi(\Delta([u,v]))$ and $\pi(\Delta([x,y]))$ are also identical.
\end{lemma}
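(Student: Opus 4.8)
The plan is to reduce the statement to a single generating step of the equivalence relation $\sim$ and then produce an explicit, $\pi$-equivariant order isomorphism between the two posets of group elements involved. Since $\sim$ is generated by the relations $[u,v]_P \sim [uz,vz]_P$ with $z\in W_P$ and $uz,vz$ length-additive, and since a reduced word $z=s_{i_1}\cdots s_{i_k}$ breaks such a relation into a chain of relations $[u z_j,v z_j]_P\sim[u z_{j+1},v z_{j+1}]_P$ with $z_j:=s_{i_1}\cdots s_{i_j}$ — each factorization $u z_j$, $v z_j$ being length-additive, so that $u z_j\leq v z_j$ and hence (recall $u\leq_P v$), by Lemma~\ref{L:lengthadd}, $u z_j\leq_P v z_j$ — it is enough to prove the lemma when $x=us$ and $y=vs$ for a simple reflection $s\in W_P$ with $us>u$ and $vs>v$. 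Transitivity of ``equality of simplicial complexes'' then finishes both assertions. Throughout I would use the elementary subword and lifting properties of Bruhat order (see, e.g., \cite[\S 5.9]{Hum}).

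Fix such an $s$. Since $s$ is a simple reflection, $a$ and $as$ differ in length by one and are comparable in Bruhat order; write $a^{+}$ for the larger and $a^{-}$ for the smaller. Because $s\in W_P$, both $a^{+}$ and $a^{-}$ lie in the coset $aW_P$, so $\pi(a^{+})=\pi(a^{-})=\pi(a)$. I would next verify two facts, in each case by splitting according to whether $as>a$ or $as<a$ and applying the subword and lifting properties to the pair consisting of $a$ and the relevant endpoint: first, that $a\mapsto a^{+}$ and $a\mapsto a^{-}$ are order-preserving for Bruhat order on $W$; and second, that $a\mapsto a^{+}$ carries $[u,v]$ into $[us,vs]$ while $a\mapsto a^{-}$ carries $[us,vs]$ into $[u,v]$. (For instance, if $a\in[u,v]$ and $as>a$ then $a^{+}=as$, the subword property applied to $u\leq a$, using $us>u$, gives $us\leq as$, and the lifting property applied to $a<vs$ gives $as\leq vs$.) Granting this, $a\mapsto a^{+}$ sends every chain of $[u,v]$ to a chain of $[us,vs]$, and composing with $\pi$ and using $\pi(a^{+})=\pi(a)$ shows every face of $\pi(\Delta([u,v]))$ is a face of $\pi(\Delta([us,vs]))$; the map $a\mapsto a^{-}$ yields the opposite inclusion. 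This proves $\pi(\Delta([u,v]))=\pi(\Delta([us,vs]))$, the second assertion of the lemma.

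For the first assertion, note that on $[u,v]_P$ the map $a\mapsto a^{+}$ is simply right multiplication by $s$: by Lemma~\ref{L:Pleftweak} the right descent set can only shrink as one moves up in $\leq_P$, so $us>u$ forces $as>a$ for every $a\geq_P u$, and dually every $b\in[us,vs]_P$ satisfies $bs<b$. Thus right multiplication by $s$ is an involution exchanging the underlying sets of $[u,v]_P$ and $[us,vs]_P$ — membership in one transferring to membership in the other by Lemma~\ref{L:lengthadd} together with the lifting property, exactly as in the Bruhat case — and it is order-preserving for $\leq_P$ in both directions by Lemma~\ref{L:lengthadd}. Hence it is an isomorphism of posets $([u,v]_P,\leq_P)\cong([us,vs]_P,\leq_P)$ that commutes with $\pi$, and this forces $\pi(\Delta([u,v]_P))=\pi(\Delta([us,vs]_P))$.

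The step I expect to be most delicate is the second of the two facts above, namely that $a\mapsto a^{+}$ and $a\mapsto a^{-}$ respect the two intervals: because these maps are not inverse to one another, the containments $\{a^{+}:a\in[u,v]\}\subseteq[us,vs]$ and $\{b^{-}:b\in[us,vs]\}\subseteq[u,v]$ must be established separately, and in each the genuinely nontrivial case is when $s$ is not a right descent of the moving element. There one must invoke the branch of the lifting property that strengthens $b\leq vs$ to $b\leq v$ (and, symmetrically, $u\leq a$ to $us\leq a$), rather than the weaker conclusion $b\leq vs$; everything else is routine Coxeter combinatorics.
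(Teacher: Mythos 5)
Your proposal is correct; the delicate points you flag (the non-descent branches of the lifting property, and the dual descent claim for $[us,vs]_P$) are exactly the ones that need the cited facts, and they go through. For the first assertion your argument is essentially the paper's: reduce to a single simple reflection $s\in W_P$ and use the descent argument of Lemma~\ref{L:Pleftweak} together with Lemma~\ref{L:lengthadd} to see that right multiplication by $s$ is a $\pi$-compatible bijection between the two $P$-Bruhat intervals. For the second assertion you take a genuinely different route. The paper argues with saturated chains: it multiplies the initial segment of a maximal chain of $[u,v]$ by $s$, and at the first element $w_{i+1}$ with $w_{i+1}s<w_{i+1}$ it invokes the lifting property to conclude $w_{i+1}=w_is$ and then splices in the tail $w_{i+2}\lessdot\cdots\lessdot v\lessdot vs$, checking the $\pi$-image is unchanged. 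You instead apply the order-preserving maps $a\mapsto\max(a,as)$ and $b\mapsto\min(b,bs)$ elementwise to arbitrary (not necessarily saturated) chains; since $\pi(as)=\pi(a)$, this yields both inclusions of complexes symmetrically, with no case analysis along the chain and no need for saturation, though the lifting property is still the engine in both proofs. Two small streamlinings are available to you: the containments $\{a^{+}:a\in[u,v]\}\subseteq[us,vs]$ and $\{b^{-}:b\in[us,vs]\}\subseteq[u,v]$ follow at once from monotonicity applied to the endpoints, because $u^{+}=us$, $v^{+}=vs$, $(us)^{-}=u$, $(vs)^{-}=v$; and the claim that $bs<b$ for all $b\in[us,vs]_P$ is best obtained from Lemma~\ref{L:Pleftweak} applied downward from the top element $vs$ (descent sets of the $W_P$-parts grow as one goes down), rather than from the bottom element. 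What the paper's construction buys in exchange is an explicit saturated chain in $[us,vs]$ with the prescribed image, which is slightly more information than your chains provide but is not needed for the lemma.
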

\begin{proof}
By Lemma \ref{L:lengthadd}, it suffices to prove the case that $x = us_i$ and $y=vs_i$ for $s_i \in W_P$ where $\ell(x) = \ell(u) + 1$ and $\ell(y) = \ell(v) + 1$.  The proof of Lemma \ref{L:lengthadd} shows that there is a bijection between maximal chains of $[u,v]_P$ and $[x,y]_P$ which preserves the images under $\pi$.  This establishes the first statement.

For the second statement, let $u =w_0 \lessdot w_1 \lessdot w_2 \lessdot \cdots \lessdot w_{r-1} \lessdot w_r = v$ be a maximal chain in $[u,v]$.  We now define a chain $x = w'_0 \lessdot w'_1 \lessdot \cdots \lessdot w'_{r-1} \lessdot w'_r = y$ as follows.  Let $w'_0 = us_i$.  Suppose $w'_i$ is defined.  Then we let $w'_{i+1}$ be either (1) $w_{i+1}s_i$, if $w_{i+1}s_i \gtrdot w_{i+1}$ or (2) $w_{i+2}$, if $w_{i+1}s_i \lessdot w_{i+1}$. In case (1), we continue the
recursive construction; in case (2), we set $w'_j = w_{j+1}$ for $r > j > i + 1$, and let $w'_r = ws_i$.  Since $w_is_i \gtrdot⋗w_i$, by \cite[Proposition 5.9]{Hum}, case (2) only occurs if $w_{i+1} = w_is_i$.  In particular
it follows that $w'_{i+1} = w_{i+2} \gtrdot w_{i+1} = w'_i$ and that $w_i$ and $w_{i+1}$ have the same image under $\pi$.  It follows that the two chains have the same image under $\pi$.  This shows that $\pi(\Delta([u,v])) \subset \pi(\Delta([x,y]))$.  The reverse inclusion is established in a similar manner.
\end{proof}

\begin{lemma}
Suppose $u \leq_P v$.  Then the complexes $\pi(\Delta([u,v]))$ and $\pi(\Delta([u,v]_P))$ are identical.  Furthermore, the complex $\pi(\Delta([u,v]))$ is pure dimensional, of dimension $\ell(v) - \ell(u)$. 
\end{lemma}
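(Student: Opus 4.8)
The plan is to establish the identification of the two complexes first, and then read off purity from it.

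For the identification, the inclusion $\pi(\Delta([u,v]_P)) \subseteq \pi(\Delta([u,v]))$ is immediate, since $[u,v]_P \subseteq [u,v]$ as sets and every $\leq_P$-chain is a Bruhat chain, so that $\Delta([u,v]_P)$ is already a subcomplex of $\Delta([u,v])$. For the reverse inclusion I would make two reductions. First, every face of $\pi(\Delta([u,v]))$ is contained in $\pi(M)$ for some maximal chain $M$ of $[u,v]$, and $\pi(\Delta([u,v]_P))$ is closed under taking subsets of its faces, so it suffices to show that $\pi(M) \in \pi(\Delta([u,v]_P))$ for each maximal chain $M$. Second, by Lemma~\ref{L:Shift} the pair $(u,v)$ is equivalent in $Q(W,W_P)$ to $(u(v_P)^{-1}, v^P)$, so Lemma~\ref{L:identical} lets me assume $v \in W^P$.

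So fix a maximal chain $M : u = m_0 \lessdot m_1 \lessdot \cdots \lessdot m_N = v$ in $[u,v]$. Since $\pi$ is order-preserving for Bruhat order (as already used in the proof of Lemma~\ref{L:par interval}), the sequence $\pi(m_0) \leq \pi(m_1) \leq \cdots \leq \pi(m_N)$ is weakly increasing; let $p_0 < p_1 < \cdots < p_k$ be its distinct values, so that $p_0 = u^P$ and $p_k = v$, and for $0 \leq j \leq k$ let $i_j$ be the largest index with $\pi(m_{i_j}) = p_j$ (so $i_k = N$), noting that $\pi(m_{i_j+1}) = p_{j+1}$ when $j < k$. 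I will then build, by induction on $j$, elements $z_0, z_1, \ldots, z_k$ with $u \leq_P z_j$, $\pi(z_j) = p_j$, and $z_j \leq m_{i_j}$: put $z_0 = u$, and given $z_j$ with $j < k$, note that $m_{i_{j+1}}$ lies in the coset $p_{j+1} W_P$ and that $m_{i_{j+1}} \geq m_{i_j} \geq z_j$, so the set $(p_{j+1} W_P)_{\geq z_j}$ is nonempty; let $z_{j+1}$ be its minimum. Then Proposition~\ref{P:uniquelift} gives $z_j \leq_P z_{j+1}$, while $z_{j+1} \leq m_{i_{j+1}}$ and $\pi(z_{j+1}) = p_{j+1}$, so the induction goes through. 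At the end $z_k \leq m_{i_k} = m_N = v \in W^P$, so Proposition~\ref{P:TopBottom} gives $z_k \leq_P v$; hence $u = z_0 \leq_P z_1 \leq_P \cdots \leq_P z_k \leq_P v$ all lie in $[u,v]_P$ and form a chain there whose image under $\pi$ is exactly $\{p_0, \ldots, p_k\} = \pi(M)$.

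For purity: a $\leq_P$-cover $x \lessdot_P y$ is automatically a Bruhat cover, for otherwise a saturated $P$-Bruhat chain from $x$ to $y$ would pass through an element strictly $\leq_P$-between $x$ and $y$. Hence $([u,v]_P,\leq_P)$ is graded by $\ell$, with minimum $u$ and maximum $v$, so each of its maximal chains has $\ell(v)-\ell(u)+1$ elements. Since $\pi$ is injective on $\leq_P$-chains, and every facet of $\pi(\Delta([u,v]_P))$ is the image of a maximal $\leq_P$-chain (extend any chain realizing the facet to a maximal one; its image only grows, hence must equal the facet), the complex $\pi(\Delta([u,v]_P))$ is pure of dimension $\ell(v)-\ell(u)$, and by the first part so is $\pi(\Delta([u,v]))$. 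I expect the main obstacle to be the inductive lift in the previous paragraph, specifically keeping the auxiliary bound $z_j \leq m_{i_j}$ in the induction hypothesis so that $(p_{j+1}W_P)_{\geq z_j}$ is never empty; once that bookkeeping is in place, Proposition~\ref{P:uniquelift} and Proposition~\ref{P:TopBottom} do the real work.
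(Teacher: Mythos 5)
Your proposal is correct and takes essentially the same route as the paper's proof: reduce to $v \in W^P$ via Lemmata \ref{L:Shift} and \ref{L:identical}, lift the projection of a chain in $[u,v]$ to a $P$-Bruhat chain by recursively taking the minimal element of the next coset above the previous lift (Proposition \ref{P:uniquelift}), conclude with Proposition \ref{P:TopBottom}, and deduce purity from the fact that maximal chains of $[u,v]_P$ are saturated Bruhat chains of length $\ell(v)-\ell(u)$. The only cosmetic difference is that you carry the bound $z_j \leq m_{i_j}$ explicitly in the induction, which the paper leaves implicit in the remark that $v \geq w_i \geq w_i'$.
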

\begin{proof}
By Proposition \ref{P:TopBottom} and Lemma \ref{L:identical}, it suffices to establish the claim in the case that $v \in W^P$.  Let $(a_1,a_2,\ldots,a_l)$ be a maximal face of $\pi(\Delta([u,v]))$, and let $w_1 < w_2 < \cdots < w_l$ be a chain in $[u,v]$ mapping to $(a_1,a_2,\ldots,a_l)$.  We assume that $\pi(w_i) = a_i$, and that $u=w_1$.  Let us define $w'_i$ recursively as follows.  Set $w'_1 = w_1 = u$, and let $w'_i$ be the minimal element of the set $(w_{i} W_P)_{\geq w'_{i-1}}$.  Since $\pi(w'_{i-1}) = \pi(w_{i-1}) < \pi(w_i)$ a unique such element exists by Proposition \ref{P:uniquelift}, and furthermore one has $w'_{i-1} \leq_P w'_i$.  But it is clear that $v\geq w_i \geq w'_i$, and so by Proposition \ref{P:TopBottom}, $w'_1 \leq_P w'_2 \leq_P \cdots \leq_P w'_r$ is a chain in $[u,v]_P$.  This establishes the first statement.  The second statement follows from the fact that maximal faces of $\pi(\Delta([u,v]_P))$ are precisely the images of the maximal chains of $[u,v]_P$ which all have length $\ell(v)-\ell(u)$.
\end{proof}

Define the \defn{downwards Demazure product} $\circ'$ of $W$
by
$$
w \circ' s_i  = \begin{cases} ws_i & \mbox{if $ws_i < w$,} \\
w &\mbox{otherwise}
\end{cases}
$$
and extend it to $w\circ' v$ using any reduced word for $v$.

\begin{lemma}\label{L:downDemazure}
Suppose $u \leq v$.  Then $\pi(\Delta([u,v]))$ and $\pi(\Delta([u\circ' v_P, v^P]))$ are identical.
\end{lemma}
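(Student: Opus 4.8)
The plan is to reduce to the already-established Lemma \ref{L:identical} by peeling off the simple reflections in $v_P$ one at a time from the bottom. Write $v = v^P v_P$ with a reduced word $v_P = s_{i_1} s_{i_2} \cdots s_{i_m}$. I will argue by induction on $m = \ell(v_P)$; the case $m=0$ is vacuous. For the inductive step, set $s = s_{i_m}$, so $vs < v$ and $v = (vs) \cdot s$ is length-additive with $vs \in W$ having parabolic part $v_P s$ of length $m-1$. There are two cases according to whether $us < u$ or $us > u$.

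In the case $us > u$: here $[u, v]$ and $[us, vs]$ are related by the equivalence-generating relation (both $us$ and $vs$ are length-additive over $u$ and $vs$ respectively, with $s \in W_P$), so Lemma \ref{L:identical} gives $\pi(\Delta([u,v])) = \pi(\Delta([us, vs]))$. Moreover $u \circ' s = u$ since $us > u$, so $(u \circ' v_P)$ is unchanged by the last step of the downwards Demazure product, and by the inductive hypothesis applied to $[us, vs]$ (whose top has shorter parabolic part $v_P s$) we get $\pi(\Delta([us, vs])) = \pi(\Delta([(us) \circ' (v_Ps), v^P]))$. Since $(us)\circ'(v_P s) = u \circ' v_P$ — because $us \circ' s = u$ and then $u \circ'(s\cdot s_{i_{m-1}}\cdots s_{i_1})$, matching the word-by-word definition — the two sides agree. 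In the case $us < u$: now $u \circ' s = us$, and I claim directly that $\pi(\Delta([u,v])) = \pi(\Delta([us, vs]))$, after which the induction proceeds exactly as before with $u$ replaced by $us$. This claim is the analogue of the second half of Lemma \ref{L:identical}: given a maximal chain $u = w_0 \lessdot \cdots \lessdot w_r = v$, one produces a chain from $us$ to $vs$ with the same $\pi$-image by the same recursive surgery used in the proof of Lemma \ref{L:identical} (replace $w_j$ by $w_j s$ when that is a cover going up, otherwise skip a step, using \cite[Proposition 5.9]{Hum} to control when skipping happens), and conversely; the key point is that passing between $w_j$ and $w_j s$ either leaves the coset unchanged or corresponds to a repeated vertex, so the $\pi$-images of the two chains coincide.

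The main obstacle I anticipate is bookkeeping the interaction between the Demazure product and the chain surgery in the case $us < u$: one must check that when the recursive construction "skips", the skipped element has the same $\pi$-image as its neighbor (so no face of the order complex is lost or gained), and that the resulting object really is a maximal chain of $[us, vs]$ of the correct length. This is entirely parallel to the argument already carried out for the second statement of Lemma \ref{L:identical}, so I expect it to go through with the same use of \cite[Proposition 5.9]{Hum}, but it is the step that requires genuine care rather than formal manipulation. Once that claim is in hand, the induction closes immediately.
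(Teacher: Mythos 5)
The case $us>u$ of your induction is wrong, and this is a genuine gap, not a bookkeeping slip. The equivalence generating $\sim$ requires a single $z\in W_P$ with \emph{both} $uz$ and $vz$ length-additive; in your situation $s$ is a final letter of a reduced word for $v_P$, so $vs<v$, and neither $[u,v]\sim[us,vs]$ nor $[us,vs]\sim[u,v]$ is an instance of the generator (in one direction the top fails to be length-additive, in the other the bottom fails). Worse, the conclusion you draw is simply false: take $W=S_3$, $W_P=\langle s_1\rangle$, $u=s_2$, $v=s_2s_1$, $s=s_1$. Then $us=s_2s_1>u$ and $vs=s_2<v$; the interval $[u,v]=\{s_2,\,s_2s_1\}$ maps under $\pi$ to the single vertex $s_2W_P$, whereas $us=s_2s_1\not\le s_2=vs$, so $[us,vs]=\emptyset$ and $\pi(\Delta([us,vs]))$ is empty. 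The correct reduction in this case, which is what the paper does, keeps the bottom fixed: $\pi(\Delta([u,v]))=\pi(\Delta([u,vs]))$, where the inclusion $\supseteq$ is immediate from $[u,vs]\subseteq[u,v]$ and the inclusion $\subseteq$ follows from the same chain surgery as in the second half of Lemma~\ref{L:identical}. Your other case ($us<u$, replace $[u,v]$ by $[us,vs]$ via the surgery argument) agrees with the paper's Case~(1) and is fine.

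The Demazure-product bookkeeping in the $us>u$ case is also off, independently of the above. You justify $(us)\circ'(v_Ps)=u\circ' v_P$ by reading the word backwards ("$u\circ'(s\cdot s_{i_{m-1}}\cdots s_{i_1})$"), but with the paper's convention that $w\circ' v$ applies the letters of a reduced word of $v$ from the left, this identity fails: e.g.\ in $S_4$ with $W_P=\langle s_1,s_2\rangle$, $u=s_3$, $v_P=s_1s_2$, $s=s_2$, one gets $(us)\circ'(v_Ps)=(s_3s_2)\circ' s_1=s_3s_2$ while $u\circ'(s_1s_2)=s_3$. Since your recursion (like the paper's) consumes the letters of $v_P$ from the right, the bottom it produces is $u\circ'$ applied to the reversed word, i.e.\ to a reduced word of $v_P^{-1}$; if you repair the case analysis as above, you still need to be explicit about this convention when matching the end result with the statement of the lemma.
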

\begin{proof}
Let $s_i \in W_P$.  It suffices to establish two cases: (1) $\pi(\Delta[u,v])) = \pi(\Delta([us_i,vs_i])$ where $us_i < u$ and $vs_i < v$, and (2)
$\pi(\Delta([u,v])) = \pi(\Delta([u,vs_i])$ where $us_i > u$ and $vs_i < v$.  Case (1) follows from the argument in the proof of Lemma \ref{L:identical}.  For Case (2), the inclusion $\pi(\Delta([u,v])) \subset \pi(\Delta([u,vs_i])$ also follows from the argument in the proof of Lemma \ref{L:identical}, and the other inclusion is obvious.
\end{proof}

Thus to study a general $\pi(\Delta([u,v]))$ we may restrict ourselves to pairs $u\leq_P v$ such that $v \in W^P$.

\subsection{Proof of Theorem \ref{theorem shellable}.}
The maximal faces of $\pi(\Delta([u,v]))$ where $u \leq_P v$ are precisely the images of the saturated $P$-Bruhat chains. We next show that these maximal facets are, in fact, in bijection with the $P$-Bruhat chains. 
As such, we can view the complex $\pi(\Delta([u,v]))$ as a 
the order complex of the $P$-Bruhat interval $[u,v]$ 
\emph{with some additional identifications of lower-dimensional faces.} 
Without these identifications, the order complex is generally not shellable
\cite[\S B.7]{BS}.

\begin{prop} \label{prop lifting}
Suppose that $u \leq_P v$.  The map from $P$-Bruhat chains to facets of $\pi(\Delta([u,v]))$ is bijective.
\end{prop}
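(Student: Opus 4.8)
The plan is to show that a maximal face $(a_1 < a_2 < \cdots < a_l)$ of $\pi(\Delta([u,v]))$, together with the knowledge that it comes from a $P$-Bruhat chain, determines that chain uniquely. So suppose $u = w_0 \lessdot_P w_1 \lessdot_P \cdots \lessdot_P w_l = v$ and $u = w'_0 \lessdot_P w'_1 \lessdot_P \cdots \lessdot_P w'_l = v$ are two saturated $P$-Bruhat chains with $\pi(w_i) = \pi(w'_i) = a_i$ for all $i$. By Lemma \ref{L:downDemazure} and Proposition \ref{P:TopBottom} we may reduce to the case $v \in W^P$, so that $w_l = w'_l = v$ and $\pi(w_l) = v$. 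I would then argue downward by induction on $i$: if $w_{i+1} = w'_{i+1}$ (true at $i+1 = l$), then since $w_i$ and $w'_i$ are both elements of the coset $a_i W_P$ lying below $w_{i+1}$ and covered by it, and since each $P$-cover $w_i \lessdot_P w_{i+1}$ forces a specific element — indeed $w_i$ is the \emph{unique} element of $(a_i W_P)_{\geq u}$ mapping to $a_i$ that sits in the chain, by Proposition \ref{P:uniquelift} applied inside the interval — I can conclude $w_i = w'_i$.

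The cleanest way to run this is probably the following. Given the face $(a_1 < \cdots < a_l)$ and a $P$-Bruhat chain $w_\bullet$ realizing it, I claim $w_i$ is forced to be the minimum of $(a_i W_P)_{\geq w_{i-1}}$. Going upward: $w_0 = u$ is forced; if $w_{i-1}$ is forced, then $w_i$ lies in $a_i W_P$, satisfies $w_i > w_{i-1}$, and — this is the key point — since $w_{i-1} \lessdot_P w_i$ is a single $P$-cover with $\pi(w_{i-1}) = a_{i-1} \lessdot a_i = \pi(w_i)$ in $W^P$, the element $w_i$ must be the minimal element of $(a_i W_P)_{\geq w_{i-1}}$. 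Indeed, if $z$ denotes that minimum, then by Proposition \ref{P:uniquelift} we have $w_{i-1} \leq_P z \leq w_i$; but $w_{i-1} \lessdot w_i$ is a cover, so either $z = w_{i-1}$ (impossible, as $z \in a_i W_P \neq a_{i-1} W_P$) or $z = w_i$. So $w_i = z$ is determined by $w_{i-1}$ and $a_i$, completing the induction and the uniqueness.

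For surjectivity: every maximal face of $\pi(\Delta([u,v]))$ is by definition the image of some saturated chain in $[u,v]$, and the preceding lemma (that $\pi(\Delta([u,v]))$ and $\pi(\Delta([u,v]_P))$ coincide and are pure of dimension $\ell(v)-\ell(u)$) shows every maximal face is the image of a saturated $P$-Bruhat chain. Combined with injectivity, the map from $P$-Bruhat chains to facets is a bijection.

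The main obstacle is the inductive identification step: one must be sure that a single $P$-cover $w_{i-1} \lessdot_P w_i$, rather than merely the relation $w_{i-1} \leq_P w_i$, pins $w_i$ down as the coset-minimum above $w_{i-1}$. This is exactly where Proposition \ref{P:uniquelift} does the work — it supplies both the existence of the minimum $z$ of $(a_i W_P)_{\geq w_{i-1}}$ and the inequality $w_{i-1} \leq_P z$, after which the cover relation $w_{i-1} \lessdot w_i$ and $z \neq w_{i-1}$ force $z = w_i$. Everything else is bookkeeping with parabolic factorizations and the reductions already available from Lemmas \ref{L:identical}, \ref{L:downDemazure}, and Proposition \ref{P:TopBottom}.
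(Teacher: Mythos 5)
Your proof is correct, but it takes a genuinely different route from the paper's on the injectivity half. The paper argues by contradiction using reflection orders: taking the first index $i$ where two chains with the same projection differ, it uses Lemma \ref{L:upper bound} to find a common upper bound $z \in w_iW_P$, appends to each of $w_i$ and $w'_i$ the increasing chain up to $z$ in the order $\prec$ that puts $W_P$-reflections last, and observes (via Lemma \ref{L:par interval}) that both concatenations beginning at $w_{i-1}$ are increasing, contradicting the uniqueness of the increasing chain from $w_{i-1}$ to $z$ (Propositions \ref{prop P last} and \ref{prop lex inc}). You instead show directly that a saturated $P$-Bruhat chain is recursively determined by its projection: the cover $w_{i-1}\lessdot_P w_i$ together with Proposition \ref{P:uniquelift} forces $w_i=\min\,(a_iW_P)_{\geq w_{i-1}}$, since the coset minimum $z$ satisfies $w_{i-1}\leq z\leq w_i$ and lies in a different coset from $w_{i-1}$, so the cover relation leaves $z=w_i$ as the only possibility. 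This is precisely the recursive description of the unique lift that the paper records later, inside the proof of Proposition \ref{P:Thin}, as a consequence of injectivity; you have reversed the logic and made that description the proof. Your route is more elementary at this point (no appeal to reflection orders beyond what already went into Proposition \ref{P:uniquelift}) and yields the lifting recursion needed in Proposition \ref{P:Thin} for free, whereas the paper's argument is shorter given that the Dyer machinery is already set up. Two small remarks: the reduction to $v\in W^P$ in your first paragraph is unnecessary, since your upward induction never uses it; and if you did want that reduction, the correct tools are Lemma \ref{L:Shift} and Lemma \ref{L:identical} rather than Lemma \ref{L:downDemazure}, which concerns full Bruhat intervals rather than $P$-Bruhat chains. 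Surjectivity is handled the same way in both arguments, via the purity statement preceding the proposition.
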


\begin{proof} 
The map is clearly surjective, so we must prove injectivity. Suppose for the sake of contradiction, that $w_{\bullet}$ and $w'_{\bullet}$ were two $P$-Bruhat chains with the same projection to $W^P$. Let $i$ be minimal such that $w_i \neq w'_i$. Then $w_i W_P = w'_i W_P$; by Lemma~\ref{L:upper bound}, we can find $z \in w_i W_P$ with $w_i, w'_i \leq z$. Form the  saturated chains $w_{i-1} \lessdot w_i \lessdot x_1 \cdots \lessdot z$ and $w_{i-1} \lessdot w'_i \lessdot x'_1 \cdots \lessdot z$ so that $ w_i \lessdot x_1 \cdots \lessdot z$ is the increasing chain from $w_i$ to $z$ and  $w'_i \lessdot x'_1 \cdots \lessdot z$ is the increasing chain from $w'_i$ to $z$. But then $w_{i-1}^{-1} w_i$ is not in $W_P$ and $w_i^{-1} x_1$ is in $W_P$ (the latter by Lemma~\ref{L:par interval}) so  $w_{i-1}^{-1} w_i \prec w_i^{-1} x_1$. We see that $w_{i-1} \lessdot w_i \lessdot x_1 \cdots \lessdot z$ is an increasing chain from $w_{i-1}$ to $z$. But $w_{i-1} \lessdot w'_i \lessdot x'_1 \cdots \lessdot z$ is also an increasing chain from $w_{i-1}$ to $z$, so we have a contradiction.
\end{proof}

For simplicity, we assume (using Proposition \ref{P:TopBottom} and Lemmata \ref{L:identical} and \ref{L:downDemazure}) that $u \leq_P v$, and that $v \in W^P$.  We claim that the images of the $P$-Bruhat chains of $[u,v]_P$, ordered by $\prec_{\lex}$, give a shelling of $\pi(\Delta([u,v]))=\pi(\Delta([u,v]_P))$. 

By standard arguments, we need only check the following. Let $x_{\bullet}$ and $y_{\bullet}$ be two $P$-Bruhat chains in $[u,v]$ with $x_{\bullet} \prec_{\lex} y_{\bullet}$. Then there is a $P$-Bruhat chain $y'_{\bullet}$ with $\pi(y'_{\bullet}) \cap \pi(y_{\bullet} ) \supseteq \pi(x_{\bullet}) \cap \pi(y_{\bullet})$ and $\# \left( \pi(y'_{\bullet} ) \cap \pi(y_{\bullet} ) \right) = \ell(v) - \ell(u) -1$.

Let $i$ be minimal such that $x_i \neq y_i$. Then let $j$ be the first index larger than $i$ for which $\pi(y_j) \in \pi(x_{\bullet})$. 
\begin{lemma} 
The chain $y_{i-1} \lessdot y_i \lessdot \ldots \lessdot y_{j}$ is \emph{not} increasing.
\end{lemma}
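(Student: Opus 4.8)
The claim is part of the standard shelling verification, so I will set up the situation as in a typical EL-shelling argument and derive a contradiction from assuming the chain $y_{i-1} \lessdot y_i \lessdot \cdots \lessdot y_j$ \emph{is} increasing. Recall the setup: $x_\bullet$ and $y_\bullet$ are $P$-Bruhat chains in $[u,v]_P$ with $x_\bullet \prec_{\lex} y_\bullet$, with $i$ minimal such that $x_i \neq y_i$, and $j$ the first index $> i$ with $\pi(y_j) \in \pi(x_\bullet)$. Since $v \in W^P$ and both chains are $P$-Bruhat chains projecting injectively (Proposition~\ref{prop lifting}), knowing $\pi(y_j) \in \pi(x_\bullet)$ and $\pi(y_j) = \pi(x_{j'})$ for some index $j'$ forces $y_j = x_{j'}$ by the lifting uniqueness. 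Note $j' \geq i$ as well, and in fact $\pi(x_{j'}) = \pi(y_j)$ while $\pi(x_i),\ldots,\pi(x_{j'-1})$ are all absent from $\{\pi(y_i),\ldots,\pi(y_{j-1})\}$ by minimality of $i$ and minimality of $j$.

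\textbf{Key step.} First I would observe that, because $\prec$ is a reflection order putting $W_P$-reflections last and the chains are $P$-Bruhat chains, the restriction of $\prec_{\lex}$ to $P$-Bruhat chains agrees with lexicographic comparison of the underlying reflection sequences in $W$. Now suppose for contradiction that $y_{i-1} \lessdot y_i \lessdot \cdots \lessdot y_j$ is the $\prec$-increasing chain from $y_{i-1}$ to $y_j = x_{j'}$. Consider the chain $x_{i-1} = y_{i-1} \lessdot x_i \lessdot \cdots \lessdot x_{j'} = y_j$; this is another saturated chain from $y_{i-1}$ to $y_j$ in $W$ (possibly of different length $j' - i + 1$ versus $j - i + 1$ — but both are saturated chains in Bruhat order between the same endpoints, hence have the same length, so in fact $j' - i = j - i$, i.e. $j' = j$). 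By Proposition~\ref{prop lex inc}, the increasing chain is $\prec_{\lex}$-minimal among all saturated chains between its endpoints, so the reflection sequence of $y_{i-1} \lessdot \cdots \lessdot y_j$ is $\prec_{\lex}$-minimal. But the chain through $x_i, \ldots, x_j$ has the same endpoints, and since $x_i \neq y_i$ its first reflection differs; minimality of the $y$-chain's sequence then forces the $y$-chain to be strictly $\prec_{\lex}$-smaller at the first differing position, i.e. $y_{i-1}^{-1} y_i \prec y_{i-1}^{-1} x_i$. Comparing with the global chains $x_\bullet$ and $y_\bullet$, which agree up to index $i-1$: this says $y_\bullet$ has a $\prec$-smaller reflection than $x_\bullet$ at position $i$, contradicting $x_\bullet \prec_{\lex} y_\bullet$.

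\textbf{Main obstacle.} The delicate point is making sure the endpoints and lengths line up so that Proposition~\ref{prop lex inc} can be applied cleanly: one must confirm $y_j = x_j$ as elements of $W$ (not just $W^P$), which uses Proposition~\ref{prop lifting}'s injectivity together with the fact that $\pi(y_j) = \pi(x_j)$ (the index $j' $ equals $j$ because saturated Bruhat chains between fixed endpoints have fixed length, and $y_{i-1} = x_{i-1}$). Once that is pinned down, the contradiction is the routine comparison of first differing reflections. I would also need to note that all the $y_m$ for $i \leq m < j$ genuinely introduce projections not among $\pi(x_\bullet)$ — this is exactly the definition of $j$ — so no collapsing of the chain length occurs on the $y$ side before index $j$; combined with the equal-length observation, the two local chains really do have the same length $j - i$, which is what lets Proposition~\ref{prop lex inc} bite.
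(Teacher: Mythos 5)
There is a genuine gap at the heart of your argument: the claim that $\pi(y_j)\in\pi(x_\bullet)$ forces $y_j=x_{j'}$ as elements of $W$. Proposition~\ref{prop lifting} is injectivity of the map from $P$-Bruhat \emph{chains} to facets — a whole chain is determined by its projected facet — and it says nothing about two vertices of two \emph{different} chains that happen to have the same projection. Likewise Proposition~\ref{P:uniquelift} produces the minimal lift of a coset over a given lower bound, and here the lower bounds differ: $y_j$ is the minimal element of $y_jW_P$ above $y_{j-1}$, while $x_{j'}$ is the minimal element of the same coset above $x_{j'-1}$, and since the chains have already diverged at index $i$ these minima need not coincide. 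So $y_j$ and $x_{j'}$ are, a priori, merely two (possibly distinct) elements of one coset of $W_P$, and your subsequent identification $j'=j$ and the application of Proposition~\ref{prop lex inc} — which requires the two saturated chains to share \emph{both} endpoints — both rest on this unjustified equality. The final lexicographic comparison (first differing reflection, contradiction with $x_\bullet\prec_{\lex}y_\bullet$) is fine, but only once equal endpoints are secured.

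The paper repairs exactly this point by not insisting on equal endpoints: from $\pi(y_j)\geq\pi(x_i)$ one uses Lemma~\ref{L:upper bound} to choose a common upper bound $z\in y_jW_P$ with $z\geq x_i,\,y_j$, and then appends to the (assumed increasing) segment $y_{i-1}\lessdot\cdots\lessdot y_j$ the $\prec$-increasing chain from $y_j$ to $z$. By Lemma~\ref{L:par interval} the appended labels all lie in $W_P$, and since $\prec$ places $W_P$-reflections last, the concatenation is still increasing, hence by Proposition~\ref{prop lex inc} lexicographically minimal among chains from $y_{i-1}$ to $z$. But some chain $y_{i-1}\lessdot x_i\lessdot\cdots\lessdot z$ exists (as $x_i\leq z$) and is lexicographically smaller because $y_{i-1}^{-1}x_i\prec y_{i-1}^{-1}y_i$, giving the contradiction. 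If you want to keep your structure, you must either prove $y_j=x_{j'}$ in this situation (which I do not believe holds in general) or introduce such a common upper bound inside the coset, which is precisely the missing idea.
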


\begin{proof}
Suppose otherwise, for the sake of contradiction. Since $y_j \geq y_{i-1} = x_{i-1}$ and $\pi(y_j) \in \pi(x_{\bullet})$, we have $\pi(y_j) \geq \pi(x_i)$.  By Lemma~\ref{L:upper bound}, we can find $z \in y_j W_P$ such that $z \geq x_i, y_j$. Now, consider the chain $y_{i-1} \lessdot y_i \lessdot \cdots \lessdot y_j \lessdot \cdots \lessdot z$ formed by concatenating $y_{i-1} \lessdot y_i \lessdot \cdots \lessdot y_j$ with the increasing chain from $y_j$ to $z$. Since reflections of $W_P$ are final in $\prec$, this chain is increasing. But it is lexicographically greater than any chain of the form $y_{i-1} \lessdot x_i \lessdot \cdots \lessdot z$, contradicting Proposition~\ref{prop lex inc}. (We use that $x_{\bullet} \prec_{\lex} y_{\bullet}$ to see that $y_{i-1}^{-1} x_i \prec y_{i-1}^{-1} y_i$.)
\end{proof}

So, there is some $r$, $i \leq r < j$, such that $y_{r-1}^{-1} y_r \succ y_r^{-1} y_{r+1}$. Write $y^0_{\bullet}$ for $y_{\bullet}$. Let $y^1_{\bullet}$ be the unique chain which agrees with $y^0_{\bullet}$ in every position but position $r$. By Proposition~\ref{prop diamond}, we have $y^1_{\bullet} \prec_{\lex} y^0_{\bullet}$ and $(y^1_{r-1})^{-1} y^1_r \prec (y^1_r)^{-1} y^1_{r+1}$. 

Now, $(y^1_{r-1})^{-1} y^1_r$ and $(y^1_r)^{-1} y^1_{r+1}$ cannot both be in $W_P$, as $\pi(y^1_{r-1}) \neq \pi(y^1_{r+1}) $. If neither of these reflections is in $W_P$, we take $y'_{\bullet} = y^1_{\bullet}$. Otherwise, by the inequality  $(y^1_{r-1})^{-1} y^1_r \prec (y^1_r)^{-1} y^1_{r+1}$, we must have $(y^1_r)^{-1} y^1_{r+1} \in W_P$. In this case, $(y^1_r)^{-1} y^1_{r+1} \succ (y^1_{r+1})^{-1} y^1_{r+2}$. Let $y^2_{\bullet}$ agree with $y^1_{\bullet}$ everywhere but in position $r+1$. Then, as before, $y^2_{\bullet} \prec_{\lex} y^1_{\bullet}$ and either $y^2_{\bullet}$ is a $P$-Bruhat chain or  $(y^2_{r+1})^{-1} y^2_{r+2} \in W_P$ and $(y^2_{r+1})^{-1} y^2_{r+2} \succ (y^2_{r+2})^{-1} y^2_{r+3}$. Continuing in this way, build $y^3$, $y^4$, etcetera. The process must halt before we reach the top of the chain, because $v\in W^P$, so we can not have $\pi(y^s_{l-1}) = \pi(y^s_l) $ with $y^{s}_{l-1} \lessdot y^s_l=v$. We take $y'_{\bullet}$ to be the $P$-Bruhat chain that results; say $y'_{\bullet} = y_{\bullet}^m$. So $y'_{\bullet} \prec_{\lex} y_{\bullet}$, as desired. It is also clear that $\# \left( \pi(y'_{\bullet} )\cap \pi(y_{\bullet} ) \right) = \ell(v) - \ell(u) -1$.

We clearly have $\pi(y'_{\bullet}) \supset \pi(y^{m-1}_{\bullet}) = \pi(y^{m-2}_{\bullet} ) = \ldots = \pi(y^1_{\bullet})$. Now, $\pi(y^1_{\bullet} )= \pi(y_{\bullet} ) \setminus \{ \pi(y_r) \}$. By construction, $i \leq r < j$, so $\pi(y_r) \not \in \pi(x_{\bullet} )$. We deduce that $\pi(y'_{\bullet}) \cap \pi(y_{\bullet} ) \supseteq \pi(x_{\bullet} )\cap \pi(y_{\bullet})$, as desired.

\subsection{Thinness}
For later use, we also establish that $\pi(\Delta([u,w]))$ is ``thin''.

\begin{prop} \label{P:Thin}
Let $u\leq_P w$.  If $F$ is a face of $\pi(\Delta([u,w]))$ of dimension   $\ell(w)-\ell(u)-1$, called a \defn{ridge}, then $F$ lies in either one or two   $(\ell(w)-\ell(u))$-dimensional faces of $\pi(\Delta([u,w]))$, and is called \defn{exterior} or \defn{interior} respectively.
  If $F$ lies in $\pi(\Delta([u',w']))$, with $\langle u',w' \rangle > \langle u,w \rangle$
  in $Q(W,W_P)$, then $F$ lies in an exterior ridge of   $\pi(\Delta([u,w]))$.
\end{prop}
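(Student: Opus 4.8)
After the reductions of Proposition~\ref{P:TopBottom} and Lemmata~\ref{L:identical} and~\ref{L:downDemazure} I may assume $u\leq_P w$ and $w\in W^P$; write $N=\ell(w)-\ell(u)$. Recall from Proposition~\ref{prop lifting} that the facets of $\pi(\Delta([u,w]))$ are the images of the saturated $P$-Bruhat chains $u=c_0\lessdot_P\cdots\lessdot_P c_N=w$, and that every face $F$ satisfies $\pi(u)\leq\min F\leq\max F\leq w$ because $\pi$ is order-preserving; in particular every facet has minimum $\pi(u)$ and maximum $w$. The first step is a dichotomy for a ridge $F$: it either omits one of $\pi(u),w$, or contains both. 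If it omits one of them, then a facet $\sigma\supseteq F$, having one more vertex than $F$, is forced to be $F\cup\{\pi(u)\}$ or $F\cup\{w\}$, and only one of these is a chain of the correct order type; so $F$ lies in exactly one facet, i.e.\ is exterior, and the final assertion of the Proposition is then automatic for it. From now on assume $F=(\pi(u)=a_1<a_2<\cdots<a_N=w)$ contains both ends.

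For such an $F$ I reuse the \emph{canonical lift} from the purity argument preceding Lemma~\ref{L:downDemazure}: set $v_1=u$ and let $v_i$ be the minimum (unique by Proposition~\ref{P:uniquelift}) of $(a_iW_P)_{\geq v_{i-1}}$, giving $u=v_1\leq_P v_2\leq_P\cdots\leq_P v_N\leq w$ with $\pi(v_i)=a_i$. The length identity $\sum_{i=1}^{N-1}\bigl(\ell(v_{i+1})-\ell(v_i)\bigr)+\bigl(\ell(w)-\ell(v_N)\bigr)=N$, in which the first $N-1$ summands are $\geq 1$ and the last is $\geq 0$, together with the fact that $w\in W^P$ is the shortest element of its coset (so $\ell(w)-\ell(v_N)$ cannot be $1$), forces $v_N=w$ and forces exactly one index $i_0$ to have $\ell(v_{i_0+1})-\ell(v_{i_0})=2$, all other steps being covers. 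Thus $[v_{i_0},v_{i_0+1}]$ is a length-two Bruhat interval --- a ``diamond'', with at most two elements in its open interior.

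The heart of the proof is a \emph{rigidity lemma}: the facets of $\pi(\Delta([u,w]))$ containing $F$ are exactly the sets $F\cup\{\pi(y)\}$, as $y$ runs over the interior vertices of $[v_{i_0},v_{i_0+1}]$ with $\pi(v_{i_0})<\pi(y)<\pi(v_{i_0+1})$ (call such $y$ \emph{admissible}). One direction is easy: an admissible $y$ makes $v_1\lessdot\cdots\lessdot v_{i_0}\lessdot y\lessdot v_{i_0+1}\lessdot\cdots\lessdot v_N$ a $P$-Bruhat chain with image $F\cup\{\pi(y)\}$, and distinct admissible $y$ give distinct facets by the injectivity in Proposition~\ref{prop lifting}. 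The reverse direction is the work: for a $P$-Bruhat chain $w_\bullet$ with $\pi(w_\bullet)=F\cup\{b\}$ (the extra vertex $b$ in slot $j$ of $F$), repeated use of the minimality characterisation of the $v_i$ (Proposition~\ref{P:uniquelift}) together with length bookkeeping identifies $w_i=v_{i+1}$ for $i<j$ and $w_i=v_i$ for $i>i_0$, whence $j\leq i_0$; the residual case $j<i_0$ is excluded --- this is where I expect the real difficulty --- by showing it would force an interior vertex of some intermediate diamond into the coset of its bottom endpoint and propagating that coincidence down the chain until it collides with a \emph{unit} step of the canonical lift, using Dyer's diamond lemma (Proposition~\ref{prop diamond}) with the reflection order of Proposition~\ref{prop P last} (and, presumably, an induction on $\ell(w)-\ell(u)$). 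Granting the rigidity lemma, $F$ lies in at most two facets and in at least one (since $v_{i_0}\leq_P v_{i_0+1}$ supplies an admissible $y$): this is thinness, and $F$ is interior precisely when \emph{both} interior vertices of the diamond are admissible.

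For the last assertion, suppose the ``both ends'' ridge $F$ also lies in $\pi(\Delta([u',w']))$ for some $\langle u',w'\rangle$ strictly below $\langle u,w\rangle$ in $Q(W,W_P)$ (this is the case that needs proof, since $\pi(\Delta([x,y]))\subseteq\pi(\Delta([x',y']))$ already holds whenever $\langle x,y\rangle\leq\langle x',y'\rangle$). As $Q(W,W_P)$ is graded by $\ell(w)-\ell(u)$, take $\langle u',w'\rangle$ to be a lower cover of $\langle u,w\rangle$; then $\pi(\Delta([u',w']))$ has dimension $N-1$, so $F$ --- with $N$ vertices --- is a facet of it. Since $\max F=w$ and $w\in W^P$ is minimal in its coset, the cover cannot be of ``lower $w$'' type, so by Proposition~\ref{P:Rietsch} it is $\langle u'',w\rangle$ with $u''\gtrdot u$, and from $\min F=\pi(u)=\pi(u'')$ we get $\pi(u'')=\pi(u)$. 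The $P$-Bruhat chain realising $F$ inside $[u'',w]\subseteq[u,w]$ is a lift of $F$ whose vertices, compared with the $v_i$ coset by coset, sit one step above $v_i$ up to index $i_0$ and equal $v_i$ thereafter; in particular its vertex at index $i_0-1$ lies in the open interior of $[v_{i_0},v_{i_0+1}]$ and in the bottom coset $a_{i_0}W_P$, hence is \emph{not} admissible. So at most one admissible $y$ remains and $F$ is exterior. The one genuinely hard point is the exclusion of $j<i_0$ in the rigidity lemma; everything else is length bookkeeping and the two cited lemmas of Dyer.
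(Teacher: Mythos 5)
Your reductions, the dichotomy for ridges omitting $\pi(u)$ or $w$, the canonical greedy lift of $F$ with its single double step at $i_0$, and even the bound $j\leq i_0$ on the insertion slot are all fine. But the heart of your plan, the ``rigidity lemma'' whose hard direction ($j<i_0$ cannot occur) you leave as a sketch, is not merely unproven --- it is false. Take $W=S_4$ (one-line notation), $W_P=\langle s_2\rangle$, $u=s_2=[1,3,2,4]$ and $w=[4,1,3,2]\in W^P$, so $\ell(w)-\ell(u)=3$. Let $F=\{\pi(u),\,[1,3,4,2],\,w\}$, a ridge containing both ends. Its greedy lift is $[1,3,2,4]\lessdot[1,3,4,2]$ followed by a length-two step up to $w$, so $i_0=2$; the diamond $\bigl[[1,3,4,2],[4,1,3,2]\bigr]$ has interior $\{[1,4,3,2],\,[3,1,4,2]\}$, and only $[3,1,4,2]$ is admissible, since $[1,4,3,2]$ lies in the coset of the bottom. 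Your lemma would therefore declare $F$ exterior, contained only in the facet coming from $[1,3,2,4]\lessdot[1,3,4,2]\lessdot[3,1,4,2]\lessdot[4,1,3,2]$. However, $[1,3,2,4]\lessdot[1,4,2,3]\lessdot[1,4,3,2]\lessdot[4,1,3,2]$ is also a saturated $P$-Bruhat chain (its projections $e<[1,2,4,3]<[1,3,4,2]<[4,1,3,2]$ are strictly increasing), and its image is a second facet containing $F$, with the extra vertex $[1,2,4,3]$ inserted at slot $1<i_0$. So the case $j<i_0$ genuinely occurs, $F$ is in fact interior even though only one vertex of the diamond is admissible, and with it collapse both your count of facets through $F$ and your criterion ``interior iff both interior vertices of the diamond are admissible'', on which your proof of the second assertion also rests.

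The paper's proof takes a route that you would in any case need: it never localizes the extra vertex to a single slot. Given two or three facets containing $F$, with extra vertices inserted at possibly different positions $a\leq b\leq c$, it compares their unique lifts (each computed by the greedy recursion of Proposition~\ref{P:uniquelift}), showing the lifts agree below the first insertion point and satisfy $x_{i+1}\gtrdot z_i$ in the intermediate range; this places all three middle elements $x_b,y_b,z_b$ inside one length-two Bruhat interval, whence two of the lifts coincide and one gets a contradiction with the assumption of three distinct facets. The second assertion is then proved by the same lift-comparison applied to $F$ viewed inside $[u,w']$ (or $[u',w]$), rather than by counting admissible diamond vertices. Any repair of your plan would have to treat insertions at all slots $j\leq i_0$ simultaneously, which is essentially that argument.
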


\begin{proof}
Since by Proposition~\ref{prop lifting}, $\pi(\Delta([u,w]))$ is pure of dimension $\ell(w)-\ell(u)$,
we know that $F$ lies in at least one $(\ell(w)-\ell(u))$-dimensional face of $\pi(\Delta([u,w]))$.

Let the vertices of $F$ be $M_1 < M_2 < \cdots < M_l$ where the inequalities are in the partial order on $W/W_P$.
Suppose (for the sake of contradiction) that $F$ lies in three maximal faces of $\pi(\Delta([u,w]))$ with the additional vertices
$\alpha$, $\beta$ and $\gamma$.
Let $M_{a-1} < \alpha < M_a$, $M_{b-1} < \beta < M_b$ and $M_{c-1} < \gamma < M_c$
with $a \leq b \leq c$.
By Proposition~\ref{prop lifting}, each of these faces lifts to a unique chain in $[u,w]$;
let these lifts be $x_1 \lessdot x_2 \lessdot \ldots \lessdot x_{l+1}$, $y_1 \lessdot y_2 \lessdot \ldots \lessdot y_{l+1}$ and $z_1 \lessdot z_2 \lessdot \ldots \lessdot z_{l+1}$.

We first note that the unique lift of Proposition~\ref{prop lifting} is obtained by recursively applying Proposition \ref{P:uniquelift}.  Namely, the lift of a maximal face $F_1 < F_2 < \cdots < F_{l+1}$ is given by setting $u = u_1$ and setting $u_i = \min \{v \mid v \geq u_{i-1} \mbox{\ and\ } \pi(v) = F_i\}$.  This follows easily from the fact that the unique lift is a saturated chain in Bruhat order.

Thus for $i < a$, one has $x_i = \min \{v : v \geq x_{i-1} \mbox{\ and\ }
\pi(v)=M_i \}$ and $z_i = \min \{v : v \geq z_{i-1} \mbox{\ and\ } \pi(v)=M_i \}$ so we deduce by induction that $x_i=z_i$
for $i<a$. Now, we claim that $x_{i+1} \gtrdot z_i$ for $a-1 \leq i
< c$. Our proof is by induction on $i$; for $i = a-1$ we have $x_a >
x_{a-1} = z_{a-1}$, establishing the base case. For $i>a$, we have
$x_{i+1} = \min \{v : v \geq x_{i} \mbox{\ and\ } \pi(v)=M_i
\}$ and $z_i = \min \{v : v \geq z_{i-1} \mbox{\ and\ }
\pi(v)=M_i \}$ and our inductive hypothesis shows that the
right hand side of first equation is contained in the right hand
side of the induction, so $x_{i+1} \geq z_i$. But every link in the
chains $x_{\bullet}$ and $z_{\bullet}$ is a cover, so
$\ell(x_{i+1})=\ell(z_i)+1$ and we complete the induction.

The same arguments show that $y_i = z_i$ for $1 \leq i < b$.
Similarly, we have $y_i = x_i$ for $b < i \leq l+1$. So we have
$y_{b-1} = z_{b-1} \lessdot x_b \lessdot x_{b+1} = y_{b+1}$,
$y_{b-1} \lessdot y_b \lessdot y_{b+1}$ and $y_{b-1} = z_{b-1}
\lessdot z_b \lessdot x_{b+1} = y_{b+1}$ But there are only two
elements of $[u,w]$ between $y_{b-1}$ and $y_{b+1}$, so two of the
three of $x_b$, $y_b$ and $z_b$ are equal. We describe the case
where $x_b=y_b$; the other cases are similar. If $a < b$, then
$\pi(x_{b})=\pi(y_{b-1}) = M_{b-1}$. But also
$\pi(x_{b})=\pi(y_{b}) = \beta$, contradicting that
$\beta > M_{b-1}$. On the other hand, if $a=b$ then we have
$x_i=y_i$ for all $i$, so the two lifts $x_{\bullet}$ and
$y_{\bullet}$ are not actually different. We have now established
the first claim.

The second claim is very similar to the first.
We must either have $(u',w') = (u,w')$ with $w' \lessdot w$ or else $(u', w') = (u', w)$ with $u' \gtrdot u$;
we treat the former case.
If $\pi(w') \neq \pi(w)$, then the claim is easy:
$F$ does not contain $\pi(w)$ but every maximal face of $\pi(\Delta([u,w]))$ does, so the only maximal face of
$\pi(\Delta([u,w]))$ containing $F$ is the one whose additional vertex is $\pi(w)$.
Thus, we assume instead that $\pi(w') = \pi(w)$.

Suppose for contradiction that there are two faces of $\pi(\Delta([u,w]))$ containing $F$,
with additional vertices $\alpha$ and $\beta$ obeying $M_{a-1} < \alpha < M_a$, $M_{b-1} < \beta < M_b$
with $a \leq b$. Let $x_{\bullet}$ and $y_{\bullet}$ be the corresponding chains in $[u,w]$.
Also, let $z_{\bullet}$ be the chain in $[u,w']$ lifting $F$.
Then, as before, we show that $y_{b-1} \lessdot x_b \lessdot x_{b+1}=y_{b+1}$ and $y_{b-1} \lessdot y_b \lessdot y_{b+1}$.
Also, the same induction as before shows that $y_i=z_i$ for $i \leq b-1$ and $y_{i+1} \gtrdot z_i$ for $b \leq i \leq l$.
So $y_{b-1} = z_{b-1} \lessdot z_b \lessdot y_{b+1}$.
But there are only two elements of $[u,w]$ between $y_{b-1}$ and $y_{b+1}$ and we conclude as before.
\end{proof}

\section{Gr\"obner degeneration in the minuscule case} \label{sec:grobner}

For this section, suppose that $G/P$ is minuscule, so from the following list
(see e.g. \cite[chapter 9]{BL}):
\begin{itemize}
\item If $G = GL(n)$ or $SL(n)$: all ordinary Grassmannians are minuscule.
\item If $G = SO(N)$: the Grassmannian of orthogonal $\lfloor N/2\rfloor$-planes
  is minuscule. If $N$ is even, the quadric cone is minuscule.
\item If $G = Sp(n)$: projective space is minuscule.
\item If $G = E_6$: the Cayley plane is minuscule.
\item If $G = E_7$: one of the $G/P$ is minuscule.
\end{itemize}
Let $L$ be the minimal ample line bundle on $G/P$.
So all the weight spaces of $H^0(L, G/P)$ are one-dimensional, and are indexed by $W/W_P$.
Let $\I$ be an indexing set for these weight spaces. 

Let $k$ denote our ground field.  Let $k[p_I]$ be the polynomial ring whose variables are indexed by
$W/W_P$, so $G/P$ is naturally embedded in $\mathrm{Proj} \ k[p_I]$.
  For any simplicial complex $K$ on the vertex set
$W/W_P$, let $\SR(K)$ be the \defn{Stanley-Reisner ring} of
$K$; this is the quotient of $k[p_I]$ by the ideal generated by
all monomials which are not supported on $K$.  

Choose any total
order on $W/W_P$ refining the standard Bruhat order and let
$\omega$ be the corresponding reverse lexicographic term order on
the monomials of $k[p_I]$. We will refer to the $p_I$ as \defn{Pl\"ucker coordinates}, even though $G/P$ may not be a Grassmannian.
 If $J$ is a homogeneous ideal of
$k[p_I]$, let $\In_{\omega} J$ be the initial ideal of $J$ with
respect to $\omega$. If $X$ is a subvariety of projective space, let
$k[X]$ be the corresponding homogeneous coordinate ring.  We have
$k[X]= k[p_I]/J$ for a saturated homogeneous ideal $J = I(X)$
and we write $\In_{\omega} ( k[X] )$ for $k[p_I]/(\In_{\omega}
J)$.

It is well known \cite{SW} that $\In_{\omega} (k[G/P]) =
\SR(\Delta(W/W_P))$; this result traces back to Hodge.
The main result of this section is
the following generalization of this result:

\begin{theorem} \label{T:HodgePedoe}
  For any $u \leq w$,
  we have $\In_{\omega} (k[\Pi_{u}^w] ) = \SR(\pi(\Delta([u,w])))$.
\end{theorem}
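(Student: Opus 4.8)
The plan is to prove the two containments $\In_\omega(k[\Pi_u^w]) \supseteq \SR(\pi(\Delta([u,w])))$ and $\In_\omega(k[\Pi_u^w]) \subseteq \SR(\pi(\Delta([u,w])))$ by a dimension/Hilbert-function count, using the Frobenius-split machinery from \S\ref{sec:frobsplit} together with the shellability and thinness results of \S\ref{sec:shelling}. First I would establish the ``easy'' inclusion: every monomial in $k[p_I]$ whose support is \emph{not} a face of $\pi(\Delta([u,w]))$ lies in $\In_\omega(I(\Pi_u^w))$. For a squarefree such monomial $p_{a_1}\cdots p_{a_r}$ with $\{a_1,\dots,a_r\}\notin \pi(\Delta([u,w]))$, the standard straightening relations of $k[G/P]$ (Hodge's theorem, $\In_\omega(k[G/P])=\SR(\Delta(W/W_P))$) give a relation whose $\omega$-leading term is a multiple of this monomial; one then checks that the ``straightened'' terms vanish on $\Pi_u^w$ because the corresponding incomparable pair of Plücker coordinates cannot both be nonzero on $\Pi_u^w$ — this uses Rietsch's stratification (Proposition \ref{P:Rietsch}) to identify which $p_I$ vanish on which projected Richardson stratum, exactly as in the classical Richardson-variety case. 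Passing to non-squarefree monomials is then formal.

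Next I would bound the Hilbert function of $\SR(\pi(\Delta([u,w])))$ from below by that of $k[\Pi_u^w]$, which by flatness of Gröbner degeneration equals that of $\In_\omega(k[\Pi_u^w])$; combined with the inclusion just proved this forces equality. For this I need $\dim \SR(\pi(\Delta([u,w]))) = \dim k[\Pi_u^w]$ and, more precisely, that the Hilbert polynomials agree. The dimension equality follows from Corollary \ref{C:ProjectionDimension} (so $\dim \Pi_u^w = \ell(w)-\ell(u)$) together with the fact, established in \S\ref{sec:shelling}, that $\pi(\Delta([u,w]))$ is pure of dimension $\ell(w)-\ell(u)$. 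To upgrade this to an equality of Hilbert \emph{polynomials}, I would use shellability (Theorem \ref{theorem shellable}): a shellable complex is Cohen-Macaulay, so $\SR(\pi(\Delta([u,w])))$ is Cohen-Macaulay, and the multiplicity (degree) of $\SR(K)$ equals the number of top-dimensional faces of $K$, which by Proposition \ref{prop lifting} is the number of saturated $P$-Bruhat chains in $[u,w]_P$. On the geometric side I would compute $\deg \Pi_u^w$ with respect to $L$; the cleanest route is the cohomological triviality of $\pi: X_u^w \to \Pi_u^w$ (Theorem \ref{T:crepant}), which gives $H^0(\Pi_u^w;L^{\otimes n})\cong H^0(X_u^w;\pi^*L^{\otimes n})$ for all $n$, so that the Hilbert polynomial of $\Pi_u^w$ with respect to $L$ equals that of the Richardson variety $X_u^w$ with respect to $\pi^*L$; the latter is classically the number of standard monomials of the appropriate shape, i.e. the number of saturated chains in $[u,w]$ that project injectively — which by Proposition \ref{prop lifting} is again the number of saturated $P$-Bruhat chains. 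Matching these two counts gives equality of leading terms, and an inductive argument over the strata (using Corollary \ref{C:reducedintersection} and the stratification of Proposition \ref{P:Rietsch}, together with the thinness Proposition \ref{P:Thin} to control how the boundary ridges sit inside neighboring strata) propagates the equality to all coefficients.

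Finally I would assemble the pieces: the containment $\In_\omega(k[\Pi_u^w]) \supseteq \SR(\pi(\Delta([u,w])))$ of ideals gives a surjection $\SR(\pi(\Delta([u,w]))) \twoheadrightarrow \In_\omega(k[\Pi_u^w])$ of graded rings; since both sides have the same Hilbert function by the count above, this surjection is an isomorphism, which is exactly the claimed equality of initial ideals. (One should note that reducedness of $\In_\omega(k[\Pi_u^w])$, needed to rule out embedded components skewing the Hilbert-function comparison, can be obtained from the Frobenius-split structure: $G/P$ degenerates to the reduced scheme $\SR(\Delta(W/W_P))$, and compatibly split subschemes degenerate to reduced subschemes.) The main obstacle I expect is the degree computation — precisely, matching $\deg_L \Pi_u^w$ with the number of saturated $P$-Bruhat chains. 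The subtlety is that the projection $\pi:X_u^w\to\Pi_u^w$ collapses some lower-dimensional Richardson faces, so one cannot just read the degree off the Richardson variety naively; one genuinely needs Theorem \ref{T:crepant} (cohomological triviality, equivalently that $\pi$ is ``crepant'' in the relevant sense) to see that the $H^0$'s agree in every degree, and then needs the combinatorial identity from Proposition \ref{prop lifting} that the surviving chains are exactly the $P$-Bruhat chains. Handling the induction cleanly across all the boundary strata, so that the Hilbert-function equality is genuinely forced and not merely consistent, is where the bulk of the work lies.
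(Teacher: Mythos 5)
There is a genuine gap, and it sits exactly at the step you call the ``easy'' inclusion. You want every squarefree monomial whose support is not a face of $\pi(\Delta([u,w]))$ to lie in $\In_{\omega}(I(\Pi_u^w))$, and you propose to get this from the straightening relations of $k[G/P]$. But those relations only produce leading terms supported on \emph{non-chains} of $W/W_P$ (incomparable pairs). The problematic monomials are the ones supported on a chain $a_1<\cdots<a_r$ in $W/W_P$ whose vertices all lie in $\pi([u,w])$ but which does not lift to a chain in $[u,w]$: such a monomial is standard for $G/P$, so no straightening relation touches it, and it does \emph{not} vanish identically on $\Pi_u^w$ (each $p_{a_i}$ is nonzero at the $T$-fixed point $a_i\in\Pi_u^w$, and $k[\Pi_u^w]$ is a domain), so there is nothing to ``check vanishes''. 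To put it in the initial ideal you must exhibit an element of $I(\Pi_u^w)$, not of $I(G/P)$, with that leading term --- which is essentially the content of the theorem. The $\Sp_4$ example at the end of \S\ref{sec:frobsplit} makes this concrete: there $G/P\cong\PP^3$, so $I(G/P)=0$ and there are no straightening relations at all, yet $I(\Pi)=(p_1p_3+p_2p_4)$ and its leading monomial is supported on a chain of $W/W_P$; your mechanism produces no relation killing it. The paper's proof avoids this direction entirely: it proves the \emph{opposite} containment $\In_\omega(I(\Pi_u^w))\subseteq I(\SR(\pi(\Delta([u,w]))))$ by induction on $\ell(w)-\ell(u)$, slicing by the nonzerodivisor $p_{\pi(u)}$, using the set-theoretic Monk-type statement (Lemma~\ref{L:GeomMonk}), the revlex Slice/Cone lemma (Lemma~\ref{L:revlex}), and the observation that $\pi(\Delta([u,w]))$ is the cone with apex $\pi(u)$ over $\bigcup_{u'\gtrdot_P u}\pi(\Delta([u',w]))$; either containment plus equality of Hilbert functions would suffice, but your argument for your containment does not go through.

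A secondary weakness: your Hilbert-series comparison only sketches equality of Hilbert \emph{polynomials} (via Cohen--Macaulayness of the shellable complex and a degree count), with an unspecified ``induction over the strata'' to upgrade to all coefficients; the conclusion requires equality of Hilbert \emph{functions} in every degree. The paper gets this directly in Proposition~\ref{P:sameHilbert}: Theorem~\ref{T:crepant} identifies $H^0(\Pi_u^w,L^{\otimes d})$ with $H^0(X_u^w,\pi^*L^{\otimes d})$, and \cite[Theorem~34(ii)]{LL} counts the latter dimension face-by-face of $\pi(\Delta([u,w]))$ --- note that $\pi^*L$ is not ample on $X_u^w$, so the ``classical'' standard monomial count you invoke needs precisely this reference. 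Finally, the reducedness discussion at the end is unnecessary: once one ideal containment and the Hilbert-function equality are in hand, flatness of the Gr\"obner degeneration forces equality of ideals with no reducedness input.
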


We begin by establishing that $k[\Pi_{u}^w]$ has the correct Hilbert series.

\begin{prop}\label{P:sameHilbert}
  Let $d$ be any nonnegative integer. Then the degree $d$ summands of
  $k[\Pi_{u}^w]$ and $\SR(\pi(\Delta([u,w])))$ have the same dimension.
\end{prop}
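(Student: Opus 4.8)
The plan is to compute both Hilbert functions and show they agree with the cardinality of a common combinatorial set, namely a count of lattice-point-like chains. First I would recall that by Theorem \ref{T:crepant} we have $H^i(\Pi_u^w; L^{\otimes d}) \iso H^i(X_u^w; \pi^* L^{\otimes d})$ for all $i$, and in particular $H^0(\Pi_u^w; L^{\otimes d}) \iso H^0(X_u^w; \pi^* L^{\otimes d})$; moreover the higher cohomology vanishes, so $\dim_k (k[\Pi_u^w])_d = \dim_k H^0(X_u^w; \pi^* L^{\otimes d})$ once we know the homogeneous coordinate ring is projectively normal (equivalently $k[\Pi_u^w]_d = H^0(\Pi_u^w; L^{\otimes d})$). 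The projective normality I would extract from Corollary \ref{C:Normal} together with the surjectivity $H^0(G/P; L^{\otimes d}) \onto H^0(\Pi_u^w; L^{\otimes d})$, which is exactly the composite established in the proof of Theorem \ref{T:crepant} (the bottom map is an iso by Borel--Weil, the top map is a surjection, and $H^0(G/P; L^{\otimes d})$ surjects onto its image in $H^0(X_u^w;\pi^*L^{\otimes d})$); surjectivity from the ambient space plus normality of the cone gives that $k[\Pi_u^w]$ is the full section ring.

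Next I would identify $\dim_k H^0(X_u^w; \pi^* L^{\otimes d})$ combinatorially. Since $X_u^w$ is a Richardson variety in $G/B$, by the standard theory (e.g. using that Richardson varieties are compatibly split and that $\pi^* L$ is the Schubert line bundle pulled back, together with the Demazure/Borel--Weil--Bott description) the space $H^0(X_u^w; \pi^* L^{\otimes d})$ has a basis indexed by the appropriate standard monomials, and in the minuscule case these are in bijection with multichains $a_1 \leq a_2 \leq \cdots \leq a_d$ in $W/W_P$ that can be lifted to a chain in the Bruhat interval $[u,w]$ of $W$ — equivalently, by the analysis in \S\ref{sec:shelling} (Proposition \ref{prop lifting} and the lemmas reducing to $u \leq_P w$, $w \in W^P$), with multichains of length $d$ in the faces of the complex $\pi(\Delta([u,w]))$. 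On the other hand, the degree-$d$ part of the Stanley--Reisner ring $\SR(K)$ of any simplicial complex $K$ has dimension equal to the number of monomials of degree $d$ supported on faces of $K$, i.e. the number of multichains of length $d$ through faces of $K$; applying this with $K = \pi(\Delta([u,w]))$ gives the same count. Matching the two indexing sets completes the proof.

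The main obstacle I expect is the middle identification: pinning down precisely that the standard-monomial basis of $H^0(X_u^w; \pi^* L^{\otimes d})$ — which a priori lives on $G/B$ and involves chains in $[u,w] \subseteq W$ — matches the multichains counted by $\SR(\pi(\Delta([u,w])))_d$, which live on $W/W_P$. This is really the content of Proposition \ref{prop lifting} (unique lifting of $P$-Bruhat chains) extended from chains to multichains, together with the reductions of Lemmata \ref{L:identical}, \ref{L:downDemazure} allowing us to assume $u \leq_P w$ and $w \in W^P$; one has to check the lifting is compatible with repetitions $a_i = a_{i+1}$, which corresponds on the flag-variety side to the fact that $\pi^* L$ is only a pullback and the ``extra'' standard monomials are exactly those with repeated indices. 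A secondary, more routine point is citing the correct minuscule standard-monomial theory over an arbitrary field; since $G/P$ is minuscule this is classical (Hodge for Grassmannians, and its analogues), and the Frobenius splitting already in hand makes the characteristic-free statement safe.
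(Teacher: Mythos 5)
Your proposal is correct and follows essentially the same route as the paper: reduce $\dim k[\Pi_u^w]_d$ to $\dim H^0(X_u^w;\pi^*L^{\otimes d})$ via Theorem \ref{T:crepant}, then count that dimension combinatorially and match it against the Stanley--Reisner count face by face. The middle step you flag as the main obstacle is discharged in the paper simply by citing \cite[Theorem~34(ii)]{LL}, which already expresses $\dim H^0(X_u^w;\pi^*L^{\otimes d})$ as a count of pairs consisting of a face of $\pi(\Delta([u,w]))$ together with rationals $0<a_1<\cdots<a_r<1$ of denominator $d$ --- that is, $\binom{d-1}{r-1}$ per $(r-1)$-dimensional face, exactly matching the monomial count --- while the identification $k[\Pi_u^w]_d\cong H^0(\Pi_u^w;L^{\otimes d})$ that you justify via restriction surjectivity is treated there as immediate.
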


\begin{proof}
  Recall that $\pi$ is the projection from $G/B$ to $G/P$.  By definition,
  $k[\Pi_{u}^w]_d$ is given by $H^0(\Pi_u^w, L^{\otimes d})=H^0( G/P, L^{\otimes d} \otimes
  \O_{\Pi_u^w})$. By Theorem~\ref{T:crepant} and \cite[Ex.~II.5.1(d)]{Har},
  this is isomorphic to $H^0(\pi^*(L^{\otimes d}), X_u^w)$.  We now use
  \cite[Theorem~34(ii)]{LL}. This states that the dimension of
  $H^0(X_u^w, \pi^*(L^{\otimes d}))$ is the number of ordered pairs $\big((M_1,
  M_2, \ldots, M_r)$, $(a_1, a_2, \ldots, a_r)\big)$ where $(M_1,
  M_2, \ldots, M_r)$ are the vertices of a face of
  $\pi(\Delta([u,w]))$ and the $a_i$ are rational numbers of the
  form $b/d$, for $b$ integral, such that $0 < a_1 < \cdots < a_r<
  1$. So each $(r-1)$-dimensional face of $\pi(\Delta([u,w]))$
  has $\binom{d-1}{r-1}$ choices for $(a_1, \ldots, a_r)$. There are
  also $\binom{d-1}{r-1}$ monomials of degree $d$ using exactly the variables
  of an $(r-1)$-dimensional face. So we have the desired equality.
\end{proof}

\begin{lemma} \label{L:GeomMonk}
  Let $u \leq w$ and let $p_{\pi(u)}$ be the Pl\"ucker coordinate
  on $G/P$ indexed by $\pi(u)$.
  Then set-theoretically, one has $X_u^w \cap \{ p_{\pi(u)} =0 \}
  = \bigcup_{u' \gtrdot_k u} X_{u'}^w \subseteq G/B$ and
  $\Pi_u^w \cap \{ p_{\pi(u)} =0 \}
  = \bigcup_{u' \gtrdot_k u} \Pi_{u'}^w \subseteq G/P$.
\end{lemma}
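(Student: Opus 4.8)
The plan is to reduce the assertion about $\Pi_u^w$ to the assertion about $X_u^w$, and then to prove the $G/B$ version by combining a weight-space computation for the section $p_{\pi(u)}$ with Proposition~\ref{P:uniquelift}. For the reduction, write $D=\{p_{\pi(u)}=0\}\subseteq G/P$; then $\{p_{\pi(u)}=0\}\subseteq G/B$ is $\pi^{-1}(D)$, and since $\pi$ restricts to a surjection $X_u^w\onto\Pi_u^w$ we have $\pi\bigl(X_u^w\cap\pi^{-1}(D)\bigr)=\pi(X_u^w)\cap D=\Pi_u^w\cap D$ and $\pi\bigl(\bigcup_{u'\gtrdot_k u}X_{u'}^w\bigr)=\bigcup_{u'\gtrdot_k u}\Pi_{u'}^w$. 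So applying $\pi$ to the $G/B$ statement yields the $G/P$ statement, and it suffices to work in $G/B$.

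For the $G/B$ statement I would use the minuscule embedding $G/P=G\cdot[v_\omega]\subseteq\PP(V_\omega)$, under which $L=\O(1)$ and, for $I\in W/W_P$, the coordinate $p_I$ is (up to scalar) the linear functional detecting the weight-$I\omega$ component of a vector; note $I\omega$ depends only on the coset $I$, since $W_P$ fixes $\omega$. For $v\in W$ the opposite cell $\Xo_v=B^-\cdot(vB/B)$ maps into $\PP(V_\omega)$ by $b\cdot(vB/B)\mapsto[\,b\cdot v_{v\omega}\,]$, where $v_{v\omega}$ is a weight-$v\omega$ vector; writing $b=tn$ with $t\in T$ and $n$ in the unipotent radical of $B^-$, the vector $b\cdot v_{v\omega}$ is a nonzero scalar times $v_{v\omega}$ plus a combination of vectors of strictly lower weight. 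From this I read off, for $v\ge u$: if $\pi(v)=\pi(u)$ then $v\omega=u\omega$ and the weight-$u\omega$ component of $b\cdot v_{v\omega}$ is never zero, so $p_{\pi(u)}$ is nowhere zero on $\Xo_v$; and if $\pi(v)>\pi(u)$ then, since for a minuscule weight the Bruhat order on $W/W_P$ is anti-isomorphic to the dominance order on weights, $u\omega\not\le v\omega$, so the weight $u\omega$ does not occur in $b\cdot v_{v\omega}$, whence $p_{\pi(u)}$ vanishes identically on $\Xo_v$ and hence on $X_v=\overline{\Xo_v}$. Because $\pi$ is order-preserving, these are the only two cases when $v\ge u$.

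To assemble, I would use the cell decomposition $X_u=\coprod_{v\ge u}\Xo_v$ together with the dichotomy above to get, set-theoretically, $\{p_{\pi(u)}=0\}\cap X_u=\coprod_{v\ge u,\ \pi(v)>\pi(u)}\Xo_v=\bigcup_{v\ge u,\ \pi(v)>\pi(u)}X_v$, the second equality (and closedness of the right-hand side) holding because each such $X_v$ is a union of cells $\Xo_{v'}$ with $v'\ge v\ge u$ and $\pi(v')\ge\pi(v)>\pi(u)$. I would then rewrite this union as $\bigcup_{u'\gtrdot_k u}X_{u'}$: one containment is clear since every $u'\gtrdot_k u$ has $u'\ge u$ and $\pi(u')>\pi(u)$; for the other, given $v\ge u$ with $\pi(v)>\pi(u)$, Proposition~\ref{P:uniquelift} applied to the coset $vW_P$ produces $z$ with $u\le_P z\le v$ and $\pi(z)=\pi(v)\neq\pi(u)$, and taking $u'$ to be the first step of a saturated $P$-Bruhat chain from $u$ to $z$ gives $u'\gtrdot_k u$ with $u'\le z\le v$, so $X_v\subseteq X_{u'}$. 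Intersecting with $X^w$ (and using $X_u^w=X_u\cap X^w$) yields $\{p_{\pi(u)}=0\}\cap X_u^w=\bigcup_{u'\gtrdot_k u}X_{u'}^w$, terms with $u'\not\le w$ being empty; pushing forward by $\pi$ finishes the proof.

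The step I expect to be the real obstacle is the vanishing half of the weight computation --- that $p_{\pi(u)}$ is identically zero on $X_{u'}$ whenever $u'\gtrdot_k u$. This is exactly where minusculeness is used: it is equivalent to the classical fact that a minuscule Schubert variety is linearly spanned by the extremal weight vectors at its $T$-fixed points, and the analogue does fail for non-minuscule embeddings. By contrast the reduction to $G/B$, the nonvanishing half of the weight computation, the closedness bookkeeping, and the combinatorial translation via Proposition~\ref{P:uniquelift} are all routine.
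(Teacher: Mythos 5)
Your argument is correct, but it proves the key statement from scratch where the paper simply cites the literature. The paper's proof is two lines: for the $G/B$ claim it reduces to $w=w_0$, where $X_u\cap\{p_{\pi(u)}=0\}=\bigcup_{u'\gtrdot_P u}X_{u'}$ is quoted from the characterization of Schubert varieties by vanishing of generalized minors in \cite{FZrec}, and then it intersects with $X^w$; the $G/P$ claim is obtained exactly as in your first paragraph, by the chain $\Pi_u^w\cap H=\pi\bigl(X_u^w\cap\pi^{-1}(H)\bigr)=\pi\bigl(\bigcup_{u'}X_{u'}^w\bigr)=\bigcup_{u'}\Pi_{u'}^w$. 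So your reduction steps (work with $X_u$, intersect with $X^w$, push forward by $\pi$) coincide with the paper's, while your middle two paragraphs replace the citation by a self-contained proof: the weight dichotomy on cells $\Xo_v$ (nonvanishing of the extremal-weight functional when $\pi(v)=\pi(u)$, identical vanishing when $\pi(v)>\pi(u)$, since $B^-\cdot v_{v\omega}$ only involves weights $\le v\omega$ and $u\omega\not\le v\omega$), plus Proposition~\ref{P:uniquelift} to rewrite $\{v\ge u:\pi(v)>\pi(u)\}$ as $\bigcup_{u'\gtrdot_P u}\{v\ge u'\}$. Two small remarks: only the easy direction of the Bruhat-versus-weight comparison is needed (each cover in $W/W_P$ subtracts a positive multiple of a positive root from the extremal weight), so you do not actually need the full anti-isomorphism, although for minuscule $\omega$ that is classical; and minusculeness enters your argument chiefly through the one-dimensionality of the weight spaces, which is what lets you identify $p_{\pi(u)}$ with the weight-$u\omega$ functional --- consistent with the lemma living in the minuscule section. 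What each approach buys: the paper's is shorter and valid verbatim for any generalized minor via \cite{FZrec}, while yours is self-contained, makes the role of minusculeness explicit, and makes visible the combinatorial translation (via Proposition~\ref{P:uniquelift}) that the paper leaves implicit in its citation.
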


\begin{proof}
  If $w=w_0$, then the first claim says
  $X_u \cap \{ p_{\pi(u)} =0 \} = \bigcup_{u' \gtrdot_k u} X_{u'}$,
  which follows from the characterizations of Schubert varieties
  in \cite{FZrec}. Intersecting that with $X^w$ we get the general case.

  For the second part, let $H$
  denote the divisor $\{ p_{\pi(u)} =0 \}$ on $G/P$. So

\hfill
$\Pi_u^w \cap H = \pi(X_u^w) \cap H
  = \pi\left( X_u^w \cap \pi^{-1}(H) \right)
  = \pi \left( \bigcup_{u' \gtrdot_k u} X_{u'}^w \right)
  = \bigcup_{u' \gtrdot_k u} \Pi_{u'}^w.$ \hfill
\end{proof}

\newcommand\Proj{{\rm Proj\ }}
\newcommand\Slice{{\rm Slice}}
\newcommand\Cone{{\rm Cone}}

For $I$ an ideal of $k[x_1, \ldots, x_{n}]$, let
$\Slice(I) \leq k[x_1, \ldots, x_{n-1}]$
denote the image of the composite
$$ I \into  k[x_1, \ldots, x_{n}] \onto k[x_1, \ldots, x_{n}]/x_n
\iso k[x_1, \ldots, x_{n-1}]. $$
For $J$ an ideal of
$k[x_1, \ldots, x_{n-1}]$, view $k[x_1, \ldots, x_{n-1}]$ as a
subring of $k[x_1, \ldots, x_{n}]$
and let $\Cone(J) \leq k[x_1, \ldots, x_{n}]$  be the ideal
generated by $J$.

\begin{lemma} \label{L:revlex}
  Suppose $I$ is a homogeneous ideal in $k[x_1,...,x_n]$ such that
  $x_n f\in I \Rightarrow f \in I$. Then $\In(I) = \Cone(\In(\Slice(I)))$,
  where the initial ideals are defined with respect to reverse
  lexicographic (``revlex'') order.
\end{lemma}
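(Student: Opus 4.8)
## Proof Plan for Lemma~\ref{L:revlex}

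The plan is to work directly with monomials and exploit the defining property of revlex order with respect to the distinguished last variable $x_n$: for monomials of the same degree, $x_n$ is the \emph{smallest} variable, so a monomial $m$ divisible by $x_n$ is a leading term of a polynomial $f$ only when every other monomial of $f$ is also divisible by $x_n$. First I would fix notation: write $R = k[x_1,\ldots,x_n]$, $\bar R = k[x_1,\ldots,x_{n-1}]$, and let $q:R\onto R/(x_n)\iso \bar R$ be the quotient, so that $\Slice(I) = q(I)$. The hypothesis $x_n f\in I\Rightarrow f\in I$ says precisely that $I : x_n = I$, equivalently that multiplication by $x_n$ is injective on $R/I$, equivalently that $x_n$ is a nonzerodivisor on $R/I$. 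I would record at the outset the elementary fact that any $g\in\bar R$ lifting an element of $\Slice(I)=q(I)$ can be corrected by a multiple of $x_n$ to land in $I$: if $q(f)=g$ with $f\in I$, then $f = g + x_n h$ for some $h\in R$, so $g = f - x_n h$, and I will use such representatives freely.

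For the inclusion $\In(I)\supseteq \Cone(\In(\Slice(I)))$, I would take a generator of the right side, namely $\mathrm{in}(g)$ for some $g\in\Slice(I)$, and lift $g$ to $f = g - x_n h\in I$ as above. Since $g\in\bar R$ involves only $x_1,\ldots,x_{n-1}$ while $x_n h$ is divisible by $x_n$, and since $x_n$ is revlex-minimal among variables of a given degree, no monomial of $x_n h$ can exceed a monomial of $g$ of the same degree; after homogenizing/degree-splitting one sees $\mathrm{in}(f) = \mathrm{in}(g)$, so $\mathrm{in}(g)\in\In(I)$, and hence the ideal it generates in $R$ lies in $\In(I)$. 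For the reverse inclusion $\In(I)\subseteq\Cone(\In(\Slice(I)))$, I would take a homogeneous $f\in I$ and argue that $\mathrm{in}(f)\in\Cone(\In(\Slice(I)))$ by induction on the $x_n$-degree (the largest power of $x_n$ dividing $\mathrm{in}(f)$, or better, a well-ordering argument on $f$). If $x_n\nmid \mathrm{in}(f)$, then $\mathrm{in}(f)$ is a monomial in $\bar R$; I claim $\mathrm{in}(f) = \mathrm{in}(q(f))$ and $q(f)\in\Slice(I)$, giving what we want — here one checks that passing to $q(f)$ does not kill the leading term, which holds because $\mathrm{in}(f)$, being $x_n$-free, survives in the quotient and still dominates, using again revlex-minimality of $x_n$. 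If instead $x_n\mid \mathrm{in}(f)$, then by revlex-minimality of $x_n$ \emph{every} monomial of $f$ of top weight is divisible by $x_n$; more carefully, writing $f$ and peeling off terms, one shows $f = x_n f'$ for some $f'\in R$ (this is where the revlex property does real work: the leading monomial being divisible by $x_n$ forces divisibility of a whole ``initial layer,'' and one reduces to $f' \in I$ via the hypothesis $I:x_n = I$), and then $\mathrm{in}(f) = x_n\,\mathrm{in}(f')$ with $f'\in I$ of strictly smaller $x_n$-degree in its leading term, so induction applies and $\mathrm{in}(f')\in\Cone(\In(\Slice(I)))$, whence so is $x_n\,\mathrm{in}(f')$.

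The step I expect to be the main obstacle is the claim, in the case $x_n\mid\mathrm{in}(f)$, that $f$ itself (not merely its leading term) is divisible by $x_n$ modulo $I$ in a way that lets the induction proceed. One must be careful that $f$ can have lower-order terms not divisible by $x_n$; the resolution is to not assert $f = x_nf'$ outright but rather to run the argument on $\mathrm{in}(f)$ alone and use a standard Gr\"obner bookkeeping: pick $f\in I$ with $\mathrm{in}(f)$ a chosen generator of $\In(I)$, and among all such $f$ choose one minimal in the term order; then show the lower terms can be absorbed. Alternatively — and this is the cleaner route I would actually take — prove the two containments at the level of \emph{vector spaces} degree by degree using Hilbert function / leading-term-basis arguments: $\dim_k (\In(I))_d = \dim_k I_d$ and $\dim_k \Cone(\In(\Slice(I)))_d$ decomposes as $\dim_k(\In(\Slice(I)))_d + \dim_k(\In(I))_{d-1}$ via the $x_n$-grading, matching $\dim_k I_d = \dim_k \Slice(I)_d + \dim_k (x_nR\cap I)_d$ and using $x_nR\cap I = x_n I$ (the hypothesis) so that $(x_nR\cap I)_d \cong I_{d-1}$. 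Combined with the easy inclusion $\Cone(\In(\Slice(I)))\subseteq \In(I)$ proved above, equality of dimensions in every degree forces equality of the ideals.
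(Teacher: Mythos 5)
Your proposal is correct, but for the harder inclusion it takes a genuinely different route from the paper. The paper notes that both sides are monomial ideals and compares minimal generators: if a minimal generator $x^a$ of $\In(I)$ is the revlex-leading term of a homogeneous $f\in I$ and $x_n\mid x^a$, then, precisely because $x_n$ is revlex-smallest and $f$ is homogeneous, \emph{every} term of $f$ is divisible by $x_n$; so $f/x_n\in I$ by the hypothesis and its leading term $x^a/x_n$ contradicts minimality of $x^a$. Hence $x_n\nmid x^a$, and $x^a=\mathrm{in}(f|_{x_n=0})$ lies in $\Cone(\In(\Slice(I)))$. This is exactly the point you hesitated over: your worry that $f$ ``can have lower-order terms not divisible by $x_n$'' is vacuous for homogeneous $f$, since an $x_n$-free monomial of the same degree would be revlex-larger than the leading term; so your first, direct route (induction on the $x_n$-order of the leading term, or the minimal-generator trick) closes with no extra bookkeeping. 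The Hilbert-function route you actually commit to is also valid: you prove $\Cone(\In(\Slice(I)))\subseteq\In(I)$ directly (same as the paper's easy direction), then use $\dim_k\In(I)_d=\dim_k I_d$ and likewise for $\Slice(I)$, plus $I\cap x_nR=x_nI$ from the hypothesis, to get $\dim_k I_d=\dim_k\Slice(I)_d+\dim_k I_{d-1}$, while for $J=\In(\Slice(I))$ one has $\dim_k\Cone(J)_d=\dim_k J_d+\dim_k\Cone(J)_{d-1}$; note your displayed decomposition quietly uses equality in degree $d-1$, so you should phrase this as an induction on $d$ (or sum over degrees), an easy repair. The trade-off: the paper's argument is short and purely monomial-level, while yours leans on Macaulay's theorem that Hilbert functions are preserved under passage to the initial ideal --- standard, but a heavier input --- in exchange for avoiding any case analysis on leading terms.
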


\begin{proof}
Both sides of the equation are monomial ideals, so it is enough to
show that the minimal generators of each side are contained in the
other side. (A monomial $x^a$ is called a minimal generator of the
monomial ideal $J$ if $x^a \in J$ but no proper divisor of $x^a$
is in $J$.)

Let $x^a$ be a minimal generator of the LHS, so $x^a$ is the leading
term of $f$ for some homogeneous $f$. If $x_n$ divides $x^a$ then
$f/x_n$ is in $I$ and has leading term $x^a/x_n$, contradicting the
minimality of $x^a$. So $x_n$ does not divide $x^a$. Then $x^a$ is
also the leading term of $f|_{x_n=0}$ and hence lies in the RHS.

Conversely, suppose that $x^a$ is a minimal generator of the RHS.
Then $x^a$ is the leading term of $f|_{x_n=0}$ for some $f \in I$
and, without loss of generality, we may assume that $f$ is
homogeneous. Then $x^a$ is also the revlex-leading term of $f$, and hence
contained in the LHS.
\end{proof}

\begin{proof}[Proof of Theorem~\ref{T:HodgePedoe}.]
It is enough to show that $\In_\omega(I(\Pi_u^w)) \subseteq I(
\SR(\pi([u,w])))$, as Proposition \ref{P:sameHilbert} will then
imply that they are equal.

Our proof is by induction
on $\ell(w) - \ell(u)$; the base case where $w=u$ is obvious.  In
the following, the slices and cones are with respect to $x_n =
p_{\pi(u)}$.  (All the ideals contain the Pl\"ucker coordinates
$p_K$ for $K < \pi(u)$ so we shall ignore these coordinates.)

First we note that $p_{\pi(u)}$ is not a zero divisor in
$k[\Pi_u^w]$ because this ring is a domain and $\Pi_u^w$ is not
contained in $\{p_{\pi(u)} = 0\}$.  
By Lemma \ref{L:GeomMonk}, we have $\Slice(I(\Pi_u^w)) ) \subseteq
I\left( \cup_{u' \gtrdot_k u} \Pi_u^w \right)$.  Taking initial
ideals preserves containment, so $\In_\omega( I(
\cup_{u'\gtrdot_k u} \Pi_{u'}^w) ) \subseteq \bigcap_{u' \gtrdot_k
u} \In_\omega( I( \Pi_{u'}^w ) )$.  By induction, we have $\In(
I(\cup_{u'\gtrdot_k u} \Pi_{u'}^w )) \subseteq \bigcap_{u'\gtrdot_k
u} I( \SR( \pi([u',w])))$.  Combining this we get
$$
\Slice(I(\Pi(u^w))) \subseteq \bigcap_{u' \gtrdot_k u} I(\SR(
\pi([u',w]))).
$$
But by Lemma \ref{L:revlex}, we have
$$
\In_\omega(I(\Pi_u^w) ) = \Cone( \In_\omega( \Slice(I(\Pi_u^w))))
\subseteq \Cone\left( \bigcap_{u' \gtrdot_k u} I(\SR( \pi([u',w])))\right).
$$
Now, a face of $\bigcup_{u' \gtrdot_k u} \pi(\Delta([u',w]))$
is precisely the image under $\pi$ of a chain in $[u,w]$ whose
least element does not lie in $\pi^{-1}(u)$. In other words,
$\bigcup_{u' \gtrdot_k u} \pi(\Delta([u',w]))$ is precisely the
simplicial complex of all faces in $\pi(\Delta([u,w]))$ which
do not contain $\pi(u)$. Since every maximal face of
$\pi(\Delta([u,w]))$ contains $\pi(u)$, the cone on
$\bigcup_{u' \gtrdot_k u} \pi(\Delta([u',w]))$ is
$\pi(\Delta([u,w]))$.  Thus $\In_\omega(I(\Pi_u^w)) \subseteq
I( \SR(\pi([u,w])))$, as required.
\end{proof}


\section{From combinatorial to geometric properties}
\label{sec:combgeom}

Using the results of \S \ref{sec:shelling} and~\ref{sec:grobner},
we give alternate proofs of the geometric Corollaries \ref{C:Normal} and
\ref{c:CohenMacaulay} for minuscule $G/P$. While the argument establishing
Cohen-Macaulayness is standard (see, e.g. \cite{DL}), our criterion
establishing normality seems to be new even for Schubert varieties.

We emphasize that these are not truly independent proofs, as the results
of \S \ref{sec:grobner} relied on \cite{LL}, which itself
used Frobenius splitting. But there are other contexts where one has a
Gr\"obner degeneration to the Stanley-Reisner scheme of a ball
(e.g. \cite{GrobnerGeom}) where these arguments would apply.

\begin{prop}\label{prop:normalCM}
  Let $X = \coprod_E X_e$ be a projective variety with a
  stratification by normal (e.g. smooth) subvarieties.
  Assume that $X$ has a Gr\"obner
  degeneration to a projective Stanley-Reisner scheme $SR(\Delta)$.
  Then any subscheme of $X$ extends to a flat subfamily of the degeneration.
  Assume that
  \begin{enumerate}
  \item each $\overline{X_e}$ degenerates to $SR(\Delta_e)$,
    where $\Delta_e \subseteq \Delta$ is homeomorphic to a ball, and
  \item if $\overline{X_e} \supset X_f$, $e\neq f$,
    then $\Delta_f$ lies in the boundary $\partial \Delta_e$ of $\Delta_e$.
  \end{enumerate}
  Then each $\overline{X_e}$ is normal and Cohen-Macaulay.
\end{prop}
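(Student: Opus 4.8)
The plan is to treat the three assertions in turn. The extension of subschemes to flat subfamilies is the standard feature of Gr\"obner degenerations: the term order $\omega$ produces a flat family over $\mathbb{A}^1$ with general fiber $X$ and special fiber $\SR(\Delta)$, arising from the $\omega$-weight filtration on $k[p_I]/I(X)$, and if $J\supseteq I(X)$ is any homogeneous ideal the induced weight filtration on $k[p_I]/J$ gives a flat family with general fiber $V(J)$ and special fiber $V(\In_\omega J)$, flatness being the fact that $\In_\omega$ preserves Hilbert functions. For Cohen--Macaulayness, recall that a simplicial complex homeomorphic to a ball is Cohen--Macaulay over any field: by Reisner's criterion in the topological form of Munkres, a ball has vanishing reduced homology and vanishing local homology $\widetilde H_i(B^d,B^d\setminus p)$ for $i<d$. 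Hence $\SR(\Delta_e)$ is a Cohen--Macaulay ring, and by hypothesis~(1) the affine cone over $\overline{X_e}$ degenerates flatly to the affine cone over $\SR(\Delta_e)$; since the Cohen--Macaulay locus in the base of a flat family is open and all fibers have the same dimension, $\overline{X_e}$ is Cohen--Macaulay. The same argument applies to every stratum, so each $\overline{X_f}$ is Cohen--Macaulay and, comparing dimensions, $\dim\overline{X_f}=\dim\Delta_f$.

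For normality I would invoke Serre's criterion $R_1+S_2$: here $S_2$ follows from Cohen--Macaulayness and $R_0$ holds because $\overline{X_e}$ is a variety, so only $R_1$ remains. Since each open stratum $X_g$ is normal, the non-normal locus of $\overline{X_e}$ is contained in $\overline{X_e}\setminus X_e$, a union of strata of codimension at least one; thus it suffices to prove that $\overline{X_e}$ is regular at the generic point $\eta_f$ of each codimension-one stratum $X_f\subsetneq\overline{X_e}$. By hypothesis~(1) and the flat-subfamily statement, $\overline{X_f}$ degenerates flatly to $\SR(\Delta_f)$, with $\Delta_f$ of codimension one in $\Delta_e$; by hypothesis~(2) we have $\Delta_f\subseteq\partial\Delta_e$. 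Pick a facet $\tau$ of $\Delta_f$: it is a codimension-one face of the ball $\Delta_e$ lying on $\partial\Delta_e$, hence---by the thinness of balls---contained in exactly one facet $F$ of $\Delta_e$. Consequently $\SR(\Delta_e)$ localized at the height-one prime $(p_v:v\notin\tau)$ is $k[p_v:v\in F]$ localized at $(p_{v_0})$, where $\{v_0\}=F\setminus\tau$: a discrete valuation ring, in particular regular of Hilbert--Samuel multiplicity one. By upper semicontinuity of multiplicity along the flat subfamily carrying $\overline{X_f}$ to $\SR(\Delta_f)$, the one-dimensional local ring $\mathcal{O}_{\overline{X_e},\eta_f}$ has multiplicity at most, hence exactly, one; and a Cohen--Macaulay local ring of multiplicity one is regular, e.g.\ by Abhyankar's inequality $e\ge\operatorname{embdim}-\dim+1$. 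Therefore $\overline{X_e}$ is regular at $\eta_f$, $R_1$ holds, and $\overline{X_e}$ is normal.

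I expect the main obstacle to be this multiplicity transfer: one must check that flatness of the subfamily carrying $\overline{X_f}$ to $\SR(\Delta_f)$ genuinely identifies the multiplicity of $\overline{X_e}$ at the generic point of $X_f$ with the multiplicity of $\SR(\Delta_e)$ along a minimal prime of the ideal of $\SR(\Delta_f)$, so that the value $1$ produced on the Stanley--Reisner side---where it comes cleanly from the thinness of $\partial\Delta_e$, that is, a boundary ridge of a ball lying in a single facet---descends to $\overline{X_e}$. Everything else is routine: flatness and Hilbert-function invariance of Gr\"obner degenerations, Reisner's criterion, openness of the Cohen--Macaulay locus, and the implication ``multiplicity one plus Cohen--Macaulay $\Rightarrow$ regular''.
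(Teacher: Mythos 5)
Your proposal is correct and follows essentially the same route as the paper: Cohen--Macaulayness of each $\overline{X_e}$ comes from the Cohen--Macaulayness of the Stanley--Reisner scheme of a ball together with openness of the Cohen--Macaulay condition in the flat (Gr\"obner) family, and normality is proved via Serre's criterion, reduced to regularity in codimension one along the codimension-one strata, using exactly the combinatorial point you use: a ridge of $\Delta_e$ contained in $\partial\Delta_e$ lies in a single facet of $\Delta_e$, so $\SR(\Delta_e)$ is generically regular along $\SR(\Delta_f)$. The one divergence is the transfer step that you yourself flag as the main obstacle. You route it through Hilbert--Samuel multiplicity: the localization of $\SR(\Delta_e)$ at the prime of a facet $\tau$ of $\Delta_f$ is a DVR (your computation of that localization is right), then upper semicontinuity of fiber multiplicity in the flat subfamily, then ``multiplicity one plus Cohen--Macaulay implies regular.'' This can be made to work, but it leans on Bennett-type semicontinuity of multiplicity in flat families plus the multiplicity-one criterion. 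The paper's proof replaces all of this by the simpler observation that $\SR(\Delta_e)$ is generically \emph{smooth} along $\SR(\Delta_f)$ and that smoothness spreads out: the smooth locus of the flat family over $\mathbb{A}^1$ is open, and the subfamily degenerating $\overline{X_f}$ to $\SR(\Delta_f)$ is irreducible, so a general fiber $\overline{X_e}$ is generically smooth along $\overline{X_f}$, contradicting a failure of $R_1$ there. So the obstacle you anticipate can be sidestepped entirely; otherwise the two arguments coincide.
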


\begin{proof}
  Each $SR(\Delta_e)$ is Cohen-Macaulay \cite{Hochster}, and since Cohen-Macaulayness
  is an open condition, each $\overline{X_e}$ is also Cohen-Macaulay.

  Serre's criterion for normality is that each $\overline{X_e}$ be $S2$
  (implied by Cohen-Macaulayness) and regular in codimension $1$.
  If the latter condition does not hold on $\overline{X_e}$, then by the
  normality of $X_e$ the failure must be along some codimension $1$
  stratum $\overline{X_f} \subset \overline{X_e}$.

  However, by the assumption that $\Delta_f \subseteq \partial \Delta_e$
  and is of the same dimension,
  the scheme $SR(\Delta_e)$ is generically smooth along $SR(\Delta_f)$.
  Then by semicontinuity, $\overline{X_e}$
  is generically smooth along $\overline{X_f}$, contradiction.

  (It is amusing to note that while in topology one thinks of the
  boundary $\partial \Delta$ as the place where $\Delta$ is {\em not}
  a smooth manifold, in fact these are exactly the codimension $1$
  faces along which $SR(\Delta)$ {\em is} generically smooth.)
\end{proof}

\begin{cor}[Corollaries \ref{C:Normal} and \ref{c:CohenMacaulay}, redux]
  Projected Richardson varieties for minuscule $G/P$ are normal and Cohen-Macaulay.
\end{cor}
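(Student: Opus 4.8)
The plan is to invoke Proposition \ref{prop:normalCM} with $X = G/P$, stratified by the open projected Richardson varieties $\Pio_u^w$, $(u,w)\in Q(W,W_P)$. The ambient Gr\"obner degeneration is the classical one taking $k[G/P]$ to $\SR(\Delta(W/W_P))$ \cite{SW}, which is the case $\Pi_u^w = G/P$ of Theorem \ref{T:HodgePedoe}. Each stratum $\Pio_u^w$ is smooth by Corollary \ref{C:ProjectionDimension}, so the hypothesis that the strata be normal holds; it remains to verify conditions (1) and (2) of the proposition, where the closed stratum $\Pi_u^w = \overline{\Pio_u^w}$ is matched with the subcomplex $\pi(\Delta([u,w])) \subseteq \Delta(W/W_P)$. (We may assume $u \le_P w$; for general $u \le w$ both $\Pi_u^w$ and $\pi(\Delta([u,w]))$ reduce to the $P$-Bruhat case by Corollary \ref{C:NotInj} and the lemmas of \S\ref{sec:shelling}.)

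Condition (1) asks that $\Pi_u^w$ degenerate to $\SR(\pi(\Delta([u,w])))$ --- which is exactly Theorem \ref{T:HodgePedoe} --- and that $\pi(\Delta([u,w]))$ be homeomorphic to a ball. For the latter: the complex is shellable by Theorem \ref{theorem shellable}, hence pure of dimension $\ell(w)-\ell(u)$ and strongly connected; it is thin by Proposition \ref{P:Thin}, so it is a pseudomanifold whose boundary is the subcomplex generated by its exterior ridges. A shellable pseudomanifold is a ball when its boundary is nonempty and a sphere otherwise. As noted in the proof of Theorem \ref{T:HodgePedoe}, $\pi(\Delta([u,w]))$ is the cone with apex $\pi(u)$ over its subcomplex of faces not containing $\pi(u)$, hence contractible and in particular not a sphere; therefore it is a ball.

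For condition (2), suppose $\Pi_u^w \supsetneq \Pi_{u'}^{w'}$ with $\Pi_{u'}^{w'}$ of codimension one. By Proposition \ref{P:Rietsch} this forces $u \le u' \le_P w' \le w$, so $\langle u',w'\rangle > \langle u,w\rangle$ in $Q(W,W_P)$, and by Corollary \ref{C:ProjectionDimension} it forces $\ell(w')-\ell(u') = \ell(w)-\ell(u)-1$. Thus every facet of $\pi(\Delta([u',w']))$ is a ridge of $\pi(\Delta([u,w]))$, and the second assertion of Proposition \ref{P:Thin} says that each such ridge is exterior, i.e. lies in the boundary subcomplex. As the boundary is a subcomplex, all of $\pi(\Delta([u',w']))$ lies in $\partial\,\pi(\Delta([u,w]))$, and the two complexes are of the same pure dimension $\ell(w)-\ell(u)-1$. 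With (1) and (2) in hand, Proposition \ref{prop:normalCM} gives that every $\Pi_u^w$ is normal and Cohen--Macaulay.

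Essentially all the content has been set up in \S\S\ref{sec:shelling}--\ref{sec:grobner}; the one step requiring a moment's care is the topological identification of $\pi(\Delta([u,w]))$ as a ball, for which one must combine shellability, thinness, and the cone structure to exclude the sphere alternative.
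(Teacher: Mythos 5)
Your proof is correct and follows essentially the same route as the paper, which likewise applies Proposition \ref{prop:normalCM} to the stratification by open projected Richardson varieties, using Theorem \ref{T:HodgePedoe} for the degenerations and the second conclusion of Proposition \ref{P:Thin} for condition (2). The only difference is that you spell out details the paper leaves implicit, notably the Danaraj--Klee-type argument (shellable plus thin plus the cone structure with apex $\pi(u)$) identifying $\pi(\Delta([u,w]))$ as a ball.
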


\begin{proof}
  We apply the Proposition above to the stratification of $G/P$
by open projected Richardson varieties. Condition (2) is
  the second conclusion of Proposition \ref{P:Thin}.
\end{proof}

\appendix
\section{Richardson varieties have rational resolutions in all characteristics}
\label{sec:appendix}
The purpose of this appendix is to establish the titular claim, and a bit more.  Brion \cite{Bripos} showed that Richardson varieties have rational resolutions in characteristic zero.
The extension to positive characteristic should be widely expected by experts, but we could not find it in the literature.  Most of the work has already been done by Michel Brion and Shrawan Kumar, and we thank them for suggestions as to how to complete the proof.

We first establish notation.  Let $p >0$ denote the characteristic.
If $(F_1,F_2) \in G\cdot (B,wB) \subseteq G/B \times G/B$, 
say that \defn{$F_1$ is $w$-related to $F_2$}. We will only need this
concept for $w$ an involution, in which case it is a symmetric relation.

Let $u \leq w$ be elements of $W$. Fix reduced words $s_{i_1} s_{i_2} \cdots s_{i_{\ell}}$ for $w$ and $s_{j_1} s_{j_2} \cdots s_{j_m}$ for $w_0 u$.
Let $r_1 r_2 \cdots r_{\ell+m}$ be the word $s_{i_1} \cdots s_{i_{\ell}} s_{j_m} \cdots s_{j_1}$. 
Let $Q \subset (G/B)^{\ell+m+1}$ be the subvariety of those sequences $(F_0, F_1, \ldots, F_{\ell+m})$ such that, for each $i$,  either $F_{i-1} = F_i$ or $F_{i-1}$ and $F_i$ are $r_i$-related. Let $D_i \subset Q$ be the hypersurface where $F_{i-1} = F_i$.

Let $p: Q \to (G/B) \times (G/B)$ be the projection onto $(F_0, F_{\ell + m })$. 
Let $Z = Z_u^w = p^{-1}(e B, w_0 B)$, let $\Delta_i = D_i \cap Z$ and let $\Delta$ be the divisor $\sum \Delta_i$.  Let $Z_u\to X_u$ and $Z^w \to X^w$ denote the Bott-Samelson varieties associated to the reduced words $s_{j_1} s_{j_2} \cdots s_{j_m}$  and $s_{i_1}s_{i_2} \cdots s_{i_\ell}$ above (see \cite[Chapter 2]{BK}, where the notation for Schubert varieties corresponds to our opposite Schubert varieties).  Then $Z_u^w = Z_u \times_{G/B} Z^w$.

Let $q: Z \to G/B$ be the projection onto $F_{\ell}$.  The following result follows from well known facts about Bott-Samelson varieties, and is implicit in the proof of \cite[Theorem 4.2.1]{Bri}.

\begin{lemma} \label{lem:ImageQ}
The image of $q$ is $X_u^w$. The map $q: Z \to X_u^w$ is birational, and $q$ gives an isomorphism $\Zo = Z \setminus \Delta \to \Xo_u^w$.
\end{lemma}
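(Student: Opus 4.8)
The plan is to deduce the statement from the standard structure theory of Bott--Samelson varieties together with the classical fact $X_u\cap X^w = X_u^w$. First I would unwind the two halves of the gallery. Fixing $F_0 = eB$, the truncation $(F_0,F_1,\dots,F_\ell)$ realizes the Bott--Samelson variety $Z^w$ attached to the reduced word $s_{i_1}\cdots s_{i_\ell}$, with its resolution map $\mu_w\colon Z^w\to X^w$ being $z\mapsto F_\ell$. Symmetrically, fixing $F_{\ell+m}=w_0B$ and reading the gallery backwards, the truncation $(F_{\ell+m},F_{\ell+m-1},\dots,F_\ell)$ uses the word $s_{j_1}\cdots s_{j_m}$ based at $w_0B$; the Bott--Samelson variety of $s_{j_1}\cdots s_{j_m}$ naturally resolves $\overline{B^+ w_0 u B/B}$, and translating by $w_0$ (using $B^- u = w_0 B^+ w_0 u$) turns this into the resolution $\mu_u\colon Z_u\to X_u$, again realized as $z\mapsto F_\ell$. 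This is exactly the assertion $Z = Z_u\times_{G/B}Z^w$ with both factors mapped to $G/B$ by $F_\ell$ and $q$ equal to this common composite; under these identifications $D_i\cap Z$ is, for $i\le\ell$, the $i$-th boundary divisor of $Z^w$, and for $i>\ell$ a boundary divisor of $Z_u$.

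Next I would invoke two standard facts about a Bott--Samelson resolution $\mu\colon Z_{\underline v}\to X_v$ (see \cite[Chapter~2]{BK}): it is proper, surjective and birational; and $\mu$ restricts to an isomorphism of schemes from $Z_{\underline v}\setminus\bigcup_i D_i$ onto the open cell, with $Z_{\underline v}\setminus\bigcup_i D_i$ being exactly the preimage of that cell. Applied to $\mu_w$ and (the $w_0$-twisted) $\mu_u$: a point of $Z$ lies off $\Delta_i$ for all $i\le\ell$ iff its $Z^w$-component lies off the boundary iff $F_\ell\in\Xo^w$, and it lies off $\Delta_i$ for all $i>\ell$ iff $F_\ell\in\Xo_u$. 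Hence $\Zo = Z\setminus\Delta = q^{-1}(\Xo^w)\cap q^{-1}(\Xo_u)=q^{-1}(\Xo_u^w)$. For the image: a point $F\in G/B$ lies in $q(Z)$ iff it lifts to both $Z^w$ and $Z_u$, i.e. iff $F\in\mu_w(Z^w)\cap\mu_u(Z_u)=X^w\cap X_u$, and $X^w\cap X_u = X_u^w$; so $q(Z)=X_u^w$.

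For the isomorphism I would argue scheme-theoretically. Since $\mu_w^{-1}(\Xo^w)=Z^w\setminus\bigcup_{i\le\ell}D_i$ is open in $Z^w$ and is carried isomorphically to $\Xo^w$ over $G/B$, and likewise $\mu_u^{-1}(\Xo_u)\cong\Xo_u$ over $G/B$, the open subscheme $\Zo\subseteq Z$ is
\[
 \Zo = (\mu_u^{-1}(\Xo_u))\times_{G/B}(\mu_w^{-1}(\Xo^w)) \;\cong\; \Xo_u\times_{G/B}\Xo^w = \Xo_u\cap\Xo^w = \Xo_u^w,
\]
and tracing the identifications shows this isomorphism is $q|_{\Zo}$. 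Finally $Z$ is an irreducible projective variety (a standard property of these Bott--Samelson fibre products, implicit in the proof of \cite[Theorem~4.2.1]{Bri}), $\Xo_u^w$ is dense in $X_u^w$, and $q$ restricts to an isomorphism over it; hence $q\colon Z\to X_u^w$ is proper and birational, which completes all three assertions.

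I expect the main obstacle to be the bookkeeping in the first paragraph: correctly matching the fixed-endpoint gallery picture to the two Bott--Samelson resolutions, in particular getting the reversal of the word and the $w_0$-twist right so that the single map $q$ (projection to $F_\ell$) is simultaneously $\mu_w$ and the translated $\mu_u$, and so that the hypersurfaces $D_i$ correspond to Bott--Samelson boundary divisors. The geometric inputs themselves --- properness, surjectivity and birationality of Bott--Samelson resolutions, the precise shape of their isomorphism locus, and $X_u\cap X^w = X_u^w$ --- are entirely standard; the only mild subtlety is insisting on scheme-theoretic isomorphisms over the open cells, so that the fibre-product computation of $\Zo$ is valid rather than merely a bijection of points.
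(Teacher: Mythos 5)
Your proposal is correct and follows essentially the same route as the paper's proof: both identify $\Zo = Z\setminus\Delta$ with $\Zo_u\times_{G/B}\Zo^w \cong \Xo_u\cap\Xo^w = \Xo_u^w$, using that the Bott--Samelson maps restrict to isomorphisms over the open cells, and then deduce the image statement and birationality from properness. The only cosmetic difference is the image computation: you intersect the images of the two Bott--Samelson resolutions and invoke $X_u\cap X^w = X_u^w$ set-theoretically, whereas the paper notes $q(Z)\subseteq X_u^w$ and obtains the reverse inclusion because $Z$ is proper, so $q(Z)$ is closed and contains the closure of $\Xo_u^w$.
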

\begin{proof}
Let $\Zo_u \subset Z_u$ and $\Zo^w \subset Z^w$ denote the open subsets of the Bott-Samelson varieties, with the boundaries removed \cite{BK}.  Then $Z \setminus \Delta$ can be identified with $\Zo_u \times_{G/B} \Zo^w$.  But the maps $Z_u \to G/B$ and $Z^w \to G/B$ give isomorphisms $\Zo_u \simeq \Xo_u$ and $\Zo^w \simeq \Xo^w$, so we have $\Zo_u \times_{G/B} \Zo^w \simeq \Xo_u \cap \Xo^w = \Xo_u^w$.

It is clear that $q(Z)$ lies in $X_u^w$. 
Now, $Z$ is clearly closed in $(G/B)^{\ell+m+1}$, so it is proper and so the map $q: Z \to X_u^w$ is proper. 
Thus its image is closed and must contain the closure of $\Xo_u^w$; so the image contains $X_u^w$. Combining this with the previous paragraph, $q(Z) = X_u^w$.
\end{proof}

The following are the main theorems of this Appendix:

\begin{theorem} \label{thm:ZSmooth}
$Z$ is nonsingular.  The divisor $\Delta$ is anticanonical and $(p-1)\Delta$ induces a splitting of $Z$.
\end{theorem}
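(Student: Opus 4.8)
The plan is to deduce all three assertions from standard facts about Bott--Samelson varieties, with one homogeneity observation carrying the smoothness.

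\textbf{Smoothness of $Z$.} First I would observe that $Q$ is smooth and projective: projecting a tuple $(F_0,\dots,F_{\ell+m})$ onto $F_0$ exhibits $Q$ as an iterated $\mathbb P^1$-bundle over $G/B$, since at the $i$-th stage $F_i$ ranges over the projective line of flags in relative position $\le r_i$ to $F_{i-1}$, a $\mathbb P^1$-bundle over the previous stage. The diagonal $G$-action on $(G/B)^{\ell+m+1}$ preserves $Q$ and makes $p$ equivariant. Now $(eB,w_0B)$ lies in the unique open dense $G$-orbit of $(G/B)\times(G/B)$ (pairs of flags in general position), with stabilizer $B^+\cap B^-=T$. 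Hence $p^{-1}$ of that orbit is a $G$-stable open subset of $Q$, $G$-equivariantly isomorphic to the associated bundle $G\times^T Z$ over $G/T$; as $T$ is a special group this bundle is Zariski-locally trivial with fibre $Z$, so its total space --- open in the smooth $Q$ --- is smooth exactly when $Z$ is. Therefore $Z$ is smooth, of dimension $(\ell+m)-\dim(G/B)=\ell(w)-\ell(u)=\dim X_u^w$ (consistent with Lemma~\ref{lem:ImageQ}).

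\textbf{$\Delta$ is anticanonical.} For this I would pass to the description $Z=Z_u\times_{G/B}Z^w$. Since $Z$ is now known to be smooth of the expected dimension $\dim Z_u+\dim Z^w-\dim(G/B)$, it sits in $Z_u\times Z^w$ as the regularly embedded, smooth preimage of the diagonal of $(G/B)^2$, whose normal bundle there is $T_{G/B}$ pulled back along the map $Z\to G/B$ onto the middle flag --- i.e.\ along the map $q$ of Lemma~\ref{lem:ImageQ}. Adjunction then gives $\omega_Z^{-1}\cong a^*\omega_{Z_u}^{-1}\otimes b^*\omega_{Z^w}^{-1}\otimes q^*\omega_{G/B}$, where $a,b$ are the two projections off $Z$. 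Into this I feed the canonical bundle formula for Bott--Samelson varieties from \cite[Ch.~2]{BK}: a Bott--Samelson variety with multiplication map $\phi$ to $G/B$ and boundary $\partial$ has $\omega^{-1}\cong\mathcal O(\partial)\otimes\phi^*\mathcal N$, where $\mathcal N$ (namely $\mathcal L_\rho$) is the line bundle on $G/B$ with $\mathcal N^{\otimes 2}\cong\omega_{G/B}^{-1}$. Applying this to $Z^w$, and to $Z_u$ after the $w_0$-translation relating it to a standard Bott--Samelson variety (permissible since $\mathcal N$ is $G$-linearized), and using $a^*\phi_{Z_u}^*=q^*=b^*\phi_{Z^w}^*$, I get $\omega_Z^{-1}\cong\mathcal O_Z(\partial Z_u|_Z+\partial Z^w|_Z)\otimes q^*(\mathcal N^{\otimes 2}\otimes\omega_{G/B})$. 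Here $\mathcal N^{\otimes 2}\otimes\omega_{G/B}\cong\mathcal O_{G/B}$, so the $q^*$-factor is trivial, while $\partial Z_u|_Z+\partial Z^w|_Z=\sum_i(D_i\cap Z)=\Delta$; hence $\omega_Z^{-1}\cong\mathcal O_Z(\Delta)$. I expect this exact cancellation of the two ``$\rho$'' contributions against $\omega_{G/B}$ to be the delicate step --- it is the analogue, for these resolutions, of the computation in the proof of \cite[Theorem~4.2.1]{Bri}.

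\textbf{The splitting.} Fixing $F_0=eB$ realises $Z$ as the fibre $\psi^{-1}(w_0B)$ of the multiplication map $\psi$ of the Bott--Samelson variety $\widetilde Z$ for the word $r_1\cdots r_{\ell+m}$, and $w_0B$ is the unique $B^-$-fixed point --- the smallest opposite Schubert variety --- so $Z$ is the $\psi$-preimage of a Richardson variety in $G/B$. The canonical Frobenius splitting of $\widetilde Z$ is compatible with $\partial\widetilde Z$ and all its strata, and with $\psi$-preimages of opposite Schubert varieties \cite[Ch.~2]{BK}; restricting it to $Z$ gives a Frobenius splitting compatible with $\partial\widetilde Z\cap Z=\Delta$. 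Since $\Delta$ is anticanonical on the smooth complete $Z$, by \cite[Proposition~2.1]{KumarMehta} any splitting compatible with every component of $\Delta$ corresponds to a section of $\omega_Z^{1-p}$ vanishing to order $\ge p-1$ along each component; as there is no room for an additional effective divisor on the complete $Z$, this section has divisor exactly $(p-1)\Delta$, so the splitting is the one induced by $(p-1)\Delta$. (If one prefers to avoid the precise compatibility statement for the canonical splitting of $\widetilde Z$, one can instead check that $\Delta$ is a simple normal crossings divisor on $Z$ --- its components restrict from the sectionwise-transverse boundary divisors $D_i$ of the tower $Q$ --- and invoke the local model for Frobenius splittings along such a divisor.)
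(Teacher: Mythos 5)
Your first two steps are essentially the paper's own argument in different packaging, and they are correct. For smoothness, the paper trivializes $p$ over the explicit open set $V$ of Lemma~\ref{lem:UTrans} via the chart $N_+\times N_-$ (Lemma~\ref{lem:Prod}), whereas you trivialize over the whole open $G$-orbit as the associated bundle $G\times^T Z$ and use that $T$ is special; both reduce to openness in the smooth tower $Q$ (and openness in the irreducible $Q$ also gives your dimension count). For the anticanonical statement, the paper writes $Z=A_+\cap A_-$ transversally inside $(G/B)^{\ell+m+1}$ and applies \cite[Lemma 1]{Bripos} together with \cite[Proposition 2.2.7]{Bri}; your adjunction along the diagonal inside $Z_u\times Z^w$ is the same computation, and the regular-embedding/normal-bundle step is justified by the codimension count plus smoothness you already have.

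The gap is in the splitting step. Your main route rests on the assertion that the canonical splitting of the Bott--Samelson variety $\widetilde Z$ for the word $r_1\cdots r_{\ell+m}$ is compatible with $\psi$-preimages of opposite Schubert varieties, in particular with the codimension-$\dim G/B$ fibre $Z=\psi^{-1}(w_0B)$. That statement is not in \cite[Ch.~2]{BK}: Theorem 2.2.5 there splits a Bott--Samelson variety compatibly with its boundary divisors only. What you want is a statement from the theory of $B$-canonical splittings, and since $\{w_0B\}=X^{w_0}$ is $B^-$-stable while $\widetilde Z$ carries only a $B$-action, you would need exactly the right form of such a theorem, quoted precisely or proved; as written, an uncited compatibility claim is carrying the entire weight of the conclusion. (Granting it, your endgame via \cite[Proposition 2.1]{KumarMehta} and completeness is fine --- it is the argument of Lemma~\ref{lem:onlyanticanonical}.)

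Your fallback does not repair this: knowing that $\Delta$ is an anticanonical simple normal crossings divisor does \emph{not} imply that the section with divisor $(p-1)\Delta$ is a splitting --- already for a smooth anticanonical curve in $\PP^2$ the $(p-1)$-st power splits only when the curve is ordinary. The criterion that actually applies is \cite[Proposition 1.3.11]{BK}: one needs $\dim Z$ of the components $\Delta_i$ to meet transversally in a single point through which no other $\Delta_j$ passes. This is exactly what the paper's proof supplies: since $u\le w$, one can omit $(\ell+m)-\ell(w_0)=\ell(w)-\ell(u)=\dim Z$ letters from $r_1\cdots r_{\ell+m}$ so that the remaining subword is a reduced word for $w_0$; the corresponding $\Delta_i$'s then intersect transversally in the single $T$-fixed chain of flags tracing that reduced word from $eB$ to $w_0B$, and this point lies on no other $\Delta_j$ because one cannot pass from $eB$ to $w_0B$ in fewer than $\ell(w_0)$ steps. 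You should either locate and verify the compatibility statement for the canonical splitting of $\widetilde Z$, or replace your last step by this transverse-point argument.
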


\begin{theorem} \label{thm:RatlRes}
We have $q_*(\O_Z) = \O_{X_u^w}$, $q_*(\omega_Z) = \omega_{X_u^w}$ and, for $j>0$, we have $R^j q_*(\O_Z) = R^j q_*(\omega_Z)=0$.
\end{theorem}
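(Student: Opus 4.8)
The plan is to prove the four assertions of the theorem in the order $q_\ast\O_Z = \O_{X_u^w}$, then $R^j q_\ast\omega_Z = 0$ for $j>0$, then $R^j q_\ast\O_Z = 0$ for $j>0$, and finally $q_\ast\omega_Z\simeq\omega_{X_u^w}$; throughout I would lean on Theorem~\ref{thm:ZSmooth}, Lemma~\ref{lem:ImageQ}, and the Frobenius-splitting machinery of \cite{BK}. For the first assertion: $Z$ is nonsingular, hence normal, by Theorem~\ref{thm:ZSmooth}; $X_u^w$ is normal by \cite{Bripos}; and $q$ is proper and birational by Lemma~\ref{lem:ImageQ}. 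Passing to the Stein factorization $Z\xrightarrow{q''}X'\xrightarrow{q'}X_u^w$, the map $q'$ is finite and birational onto a normal variety, hence an isomorphism, and $q''_\ast\O_Z = \O_{X'}$ by construction, so $q_\ast\O_Z = q'_\ast q''_\ast\O_Z = \O_{X_u^w}$.

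For $R^j q_\ast\omega_Z = 0$ ($j>0$): by Theorem~\ref{thm:ZSmooth} the divisor $\Delta$ is anticanonical, so $\omega_Z\simeq\O_Z(-\Delta)$, and $(p-1)\Delta$ induces a Frobenius splitting of $Z$, which in particular is compatible with $\Delta$; by Lemma~\ref{lem:ImageQ}, $q$ is an isomorphism over $\Xo_u^w$, so the exceptional locus of $q$ lies in $\Delta$. This places us exactly in the hypotheses of \cite[Theorem~1.3.14]{BK}, whose conclusion is the desired vanishing.

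The substantive step, which I expect to be the main obstacle, is $R^j q_\ast\O_Z = 0$ for $j>0$. I would induct on the length $\ell+m$ of the word $r_\bullet$ defining $Z$, the case $\ell+m=0$ being trivial. Applying $Rq_\ast$ to $0\to\O_Z(-\Delta)\to\O_Z\to\O_\Delta\to 0$ and using $\O_Z(-\Delta)\simeq\omega_Z$ together with the previous paragraph gives $R^j q_\ast\O_Z\cong R^j q_\ast\O_\Delta$ for $j\ge 1$, so it is enough to show $R^j q_\ast\O_\Delta = 0$ for $j>0$. Now $\Delta=\sum_i\Delta_i$ with $\Delta_i = D_i\cap Z$, and the key structural facts --- which need to be checked carefully, and are where the work lies --- are that $\Delta$ is a normal-crossings divisor in the smooth variety $Z$ and that each nonempty intersection $\Delta_{i_1}\cap\cdots\cap\Delta_{i_r}$ is again a variety of the form considered in this appendix, for the word obtained by deleting the letters $r_{i_1},\dots,r_{i_r}$, with $q$ restricting to the corresponding proper birational map onto a lower-dimensional Richardson variety. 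Granting this, resolve $\O_\Delta$ by the \v{C}ech-type complex $\bigoplus_i\O_{\Delta_i}\to\bigoplus_{i<j}\O_{\Delta_i\cap\Delta_j}\to\cdots$; the inductive hypothesis kills $R^{>0}q_\ast$ of every term, so by the hypercohomology spectral sequence $R^{>0}q_\ast\O_\Delta$ is computed by the higher cohomology of $\bigoplus_i\O_{q(\Delta_i)}\to\bigoplus_{i<j}\O_{q(\Delta_i)\cap q(\Delta_j)}\to\cdots$ on $X_u^w$, which vanishes because $\partial X_u^w$ is a reduced union of Richardson varieties whose intersection poset is the order complex of the Bruhat interval $[u,w]$, making that complex a resolution of $\O_{\partial X_u^w}$. (In characteristic zero this step is subsumed by Brion's theorem that Richardson varieties have rational singularities; in characteristic $p$ I do not see how to avoid an argument of this kind, short of checking whether a single appeal to the base-change results of \cite[\S1.3]{BK} already yields the $\O_Z$-vanishing.)

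Finally, $q_\ast\omega_Z\simeq\omega_{X_u^w}$ follows formally from the three statements already proved, by \cite[Lemma~3.4.2]{BK}, exactly as in the proof of Theorem~\ref{T:RatSing}.
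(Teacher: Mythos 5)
Your handling of $q_*\O_Z=\O_{X_u^w}$ (normality of $Z$ and of $X_u^w$ plus Zariski's main theorem), of $R^jq_*\omega_Z=0$ via \cite[Theorem~1.3.14]{BK}, and of the final deduction $q_*\omega_Z\simeq\omega_{X_u^w}$ via \cite[Lemma~3.4.2]{BK} agrees with the paper. The genuine gap is in your argument for $R^jq_*\O_Z=0$. The structural claim it rests on is false: the boundary strata of $Z$ are Bott--Samelson-type varieties attached to \emph{subwords} of the chosen reduced words, and a subword of a reduced word obtained by deleting letters need not be reduced. Already for a single deletion this fails: for $w=s_1s_2s_1$, deleting the middle letter gives the word $(s_1,s_1)$, whose Bott--Samelson variety is $\PP^1\times\PP^1$ mapping onto a one-dimensional Schubert variety with $\PP^1$ fibers. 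So the divisors $\Delta_i$, and a fortiori the deeper intersections $\Delta_{i_1}\cap\cdots\cap\Delta_{i_r}$ appearing in your \v{C}ech-type resolution of $\O_\Delta$, are in general \emph{not} of the form considered in the appendix, $q$ restricted to them is not birational onto its image (the image is governed by a Demazure product, not by a length-one-smaller Richardson variety), and your inductive hypothesis therefore does not apply to the terms of the resolution. Separately, even granting the structure, your last step --- exactness in positive degrees of the complex $\bigoplus_i\O_{q(\Delta_i)}\to\bigoplus_{i<j}\O_{q(\Delta_i)\cap q(\Delta_j)}\to\cdots$ on $X_u^w$ --- is not a consequence of the order-complex combinatorics alone; it requires reducedness of the scheme-theoretic intersections and a sheaf-level Mayer--Vietoris statement, which in this subject is itself usually proved by Frobenius-splitting arguments, so the reasoning risks circularity.

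The paper avoids all of this with a direct Frobenius-splitting vanishing argument, close in spirit to your parenthetical fallback but with different specific inputs. Lemma~\ref{AmpleDi} produces nonnegative integers $a_i$ with $\mathcal{O}(\sum a_i\Delta_i)$ ample on $Z$. For $L$ very ample on $X_u^w$ and $\nu$ chosen with $p^\nu>a_i$ for all $i$, the line bundle $(q^*L^n)^{p^\nu}\otimes\mathcal{O}(\sum a_i\Delta_i)$ is ample, so its higher cohomology on the split variety $Z$ vanishes by \cite[Theorem~1.2.8]{BK}; since $(p-1)\Delta$ gives the splitting, \cite[Lemma~1.4.11]{BK} injects $H^j(Z,q^*L^n)$ into that group, whence $H^j(Z,q^*L^n)=0$ for all $n>0$ and $j>0$. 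The relative vanishing criterion \cite[Lemma~3.3.3]{BK} then converts this into $R^jq_*\O_Z=0$. If you want to salvage an inductive approach along your lines, you would first have to develop the theory for arbitrary (non-reduced) words and non-birational $q$, which is considerably more than the theorem requires.
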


In the terminology of~\cite[Definition 3.4.1]{BK}, Theorem \ref{thm:RatlRes} says that $q: Z \to X_u^w$ is a rational resolution.  This is the characteristic $p$ version of rational singularities.

%

We now begin introducing the terminology we will use to prove Theorem~\ref{thm:ZSmooth}.
This Theorem, and in particular the smoothness of $Z$, is the result which we could not find a published proof of in arbitrary characteristic; all references use Kleiman transversality or related generic smoothness results which don't hold in finite characteristic.

Let $V \subset (G/B) \times (G/B)$ be the set of pairs of flags $(F, F')$ such that $F'$ is $w_0$-related both to $F$ and to $e B$.
Clearly, this is an open locus in $(G/B) \times (G/B)$. 
Define $\sigma: N_{+} \times N_{-} \to (G/B) \times (G/B)$ by $(n_{+}, n_{-}) \mapsto (n_{+} n_{-} B, n_{+} n_{-} w_0 B)$

\begin{lemma} \label{lem:UTrans}
The map $\sigma$ is an isomorphism between $N_{+} \times N_{-}$ and $V$.
\end{lemma}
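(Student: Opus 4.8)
The plan is to identify the two factors $N_{+}$ and $N_{-}$ with big Bruhat cells and read off $\sigma$ and its inverse from those identifications. Throughout I will use that $w_0$-relatedness is symmetric (since $w_0$ is an involution), that $w_0 B w_0^{-1} = B^{-}$, and the usual statements that the relevant multiplication maps are open immersions onto big cells. Concretely: since $\mathrm{Stab}(eB) = B$, since $T$ fixes $w_0B$ modulo $B$, and since $w_0 B w_0^{-1} = B^{-}$ meets $N_{+}$ trivially, the orbit map $n_{+} \mapsto n_{+} w_0 B$ gives an isomorphism $N_{+} \cong B w_0 B/B = \{F : F \text{ is } w_0\text{-related to } eB\}$. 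Dually, since $\mathrm{Stab}(w_0 B) = w_0 B w_0^{-1} = B^{-}$, the orbit map $n_{-} \mapsto n_{-} B$ gives an isomorphism $N_{-} \cong B^{-} B/B = \{F : F \text{ is } w_0\text{-related to } w_0 B\}$. In both cases the inverse is a morphism, because the cell in question is an affine space with polynomial coordinates.

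Given $(F,F') \in V$, I would use the first identification to write $F' = n_{+} w_0 B$ for a unique $n_{+} \in N_{+}$, and then translate the pair by $n_{+}^{-1}$. Because $F'$ is $w_0$-related to $F$, the flag $n_{+}^{-1} F$ is $w_0$-related to $n_{+}^{-1} F' = w_0 B$, so the second identification yields $n_{+}^{-1} F = n_{-} B$ for a unique $n_{-} \in N_{-}$, i.e.\ $F = n_{+} n_{-} B$. Since $n_{-} \in N_{-} \subseteq B^{-} = \mathrm{Stab}(w_0 B)$ we get $n_{-} w_0 B = w_0 B$, hence $F' = n_{+} w_0 B = n_{+} n_{-} w_0 B$, so $(F,F') = \sigma(n_{+}, n_{-})$. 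Running this computation backwards shows $\sigma(N_{+} \times N_{-}) \subseteq V$. Thus $\sigma$ is a bijection onto $V$, and the assignment $(F,F') \mapsto (n_{+}, n_{-})$ built above is its inverse; this inverse is a morphism, being a composition of the $G$-action with the two regular inverse maps of the first paragraph. Hence $\sigma : N_{+} \times N_{-} \to V$ is an isomorphism.

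Injectivity can also be checked by hand as a sanity test: $\sigma(n_{+}, n_{-}) = \sigma(n_{+}', n_{-}')$ forces $n_{+} w_0 B = n_{+}' w_0 B$, whence $n_{+} = n_{+}'$, and then $n_{-} B = n_{-}' B$, whence $n_{-} = n_{-}'$. The only delicate point is the bookkeeping with $w_0 B w_0^{-1} = B^{-}$ and with absorbing the torus factor when identifying the cells; once those identifications are set up cleanly the rest is purely formal, so I do not expect a serious obstacle. (If one prefers a more uniform packaging, both cell identifications are instances of the open immersion $N_{+} \times B^{-} \hookrightarrow G$ and its $B^{-}$-translate.)
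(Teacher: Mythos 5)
Your proposal is correct and follows essentially the same route as the paper: write $F'=n_+w_0B$ using that $F'$ is $w_0$-related to $eB$, translate by $n_+^{-1}$, and then use $w_0$-relatedness of $F$ and $F'$ to produce the unique $n_-$ with $n_-B=n_+^{-1}F$, together with $n_-w_0B=w_0B$ for the converse inclusion. The only difference is cosmetic packaging via big-cell identifications, plus the fact that you also spell out why the inverse is a morphism, a point the paper explicitly leaves to the reader.
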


\begin{proof}
We check bijectivity on points, and leave the rest to the reader.
First, we must show that the image of the map is $V$. 
Since $e B$ and $w_0 B$ are $w_0$-related to each other, so are their translations by $n_{+} n_{-}$. 
Also, we have $(B, n_{+} n_{-} w_0 B) = (n_{+} B, n_{+} w_0 B)$, so $B$ and $n_{+} n_{-} w_0 B$ are $w_0$-related.

Now, let $(F, F')$ be in $V$.
Since $F'$ is $w_0$-related to the standard flag, there is a unique $n_{+} \in N_{+}$ such that $n_{+} w_0 B = F'$. 
Then, for any $n_{-} \in N_{-}$, we will have $n_{+} n_{-} w_0 B = F'$.

The hypothesis that $F$ and $F'$ are $w_0$-related tells us that $n_{+}^{-1} F$ and $n_{+}^{-1}(F') = w_0 B$ are $w_0$-related.
So there is a unique $n_{-} \in N_{-}$ with $n_{-} B = n_{+}^{-1} F$.
In other words, there is a unique $n_{-} \in N_{-}$ with $n_{+} n_{-} B = F$. 

We have found the unique $(n_{+}, n_{-})$ such that $ (n_{+} n_{-} B, n_{+} n_{-} w_0 B)= (F,F')$.
\end{proof}

\begin{lemma} \label{lem:Prod}
We have $p^{-1}(V) \cong Z \times V$.
\end{lemma}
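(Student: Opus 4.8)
The plan is to build the isomorphism explicitly from the trivialization $\sigma$ of Lemma~\ref{lem:UTrans}. First I would record the elementary fact that $Q \subseteq (G/B)^{\ell+m+1}$ is stable under the diagonal action of $G$: the condition ``$F_{i-1}=F_i$ or $F_{i-1}$ is $r_i$-related to $F_i$'' says that $(F_{i-1},F_i)$ lies in the $G$-stable subset $\{F_{i-1}=F_i\}\cup G\cdot(B,r_iB)$ of $(G/B)^2$, so $g\cdot(F_0,\dots,F_{\ell+m})\in Q$ whenever $(F_0,\dots,F_{\ell+m})\in Q$. Note also that $Z=p^{-1}(eB,w_0B)$ is contained in $p^{-1}(V)$, since $(eB,w_0B)\in V$.

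Next I would use Lemma~\ref{lem:UTrans} to produce the morphism $\gamma:V\to G$ given by the composite $V\xrightarrow{\sigma^{-1}}N_+\times N_-\xrightarrow{\mathrm{mult}}G$; by construction $\gamma(F,F')\cdot B=F$ and $\gamma(F,F')\cdot w_0B=F'$. Then I would define
$$\Phi:p^{-1}(V)\longrightarrow Z\times V,\qquad (F_0,\dots,F_{\ell+m})\longmapsto\big(\gamma(F_0,F_{\ell+m})^{-1}\cdot(F_0,\dots,F_{\ell+m}),\ (F_0,F_{\ell+m})\big),$$
$$\Psi:Z\times V\longrightarrow p^{-1}(V),\qquad\big((E_0,\dots,E_{\ell+m}),\ (F,F')\big)\longmapsto\gamma(F,F')\cdot(E_0,\dots,E_{\ell+m}).$$
Both are morphisms because $\gamma$ is and the $G$-action on $(G/B)^{\ell+m+1}$ is algebraic. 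To see $\Phi$ lands in $Z\times V$: with $g=\gamma(F_0,F_{\ell+m})$ we have $g\cdot B=F_0$ and $g\cdot w_0B=F_{\ell+m}$, hence $g^{-1}\cdot F_0=eB$ and $g^{-1}\cdot F_{\ell+m}=w_0B$, while $g^{-1}\cdot(F_0,\dots,F_{\ell+m})$ still lies in $Q$ by the $G$-stability above; thus it lies in $p^{-1}(eB,w_0B)=Z$. To see $\Psi$ lands in $p^{-1}(V)$: a point of $Z$ has $E_0=eB$ and $E_{\ell+m}=w_0B$, so $p\big(\gamma(F,F')\cdot E_\bullet\big)=(\gamma(F,F')\cdot B,\ \gamma(F,F')\cdot w_0B)=(F,F')\in V$.

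Finally I would check $\Psi\circ\Phi=\mathrm{id}$ and $\Phi\circ\Psi=\mathrm{id}$: in each composite the ``pair'' coordinate $(F,F')$ is carried along unchanged, so the same group element $\gamma(F,F')$ is applied coming back, and the two $G$-translations cancel. The computations are all routine; the only substantive input is the uniqueness in Lemma~\ref{lem:UTrans}, which is precisely what guarantees $\gamma$ is a well-defined morphism rather than a mere set-theoretic section, so I do not expect any real obstacle once that lemma is established. (One could equivalently describe the isomorphism as the restriction to $V\times Z$ of the action map $G\times Q\to Q$ precomposed with $\gamma\times\mathrm{id}$, but the explicit $\Phi,\Psi$ make the verification most transparent, and they also show that $D_i\cap p^{-1}(V)$ corresponds to $(D_i\cap Z)\times V$.)
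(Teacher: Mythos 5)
Your proposal is correct and is essentially the paper's own argument: the paper likewise defines mutually inverse maps by translating a point of $p^{-1}(V)$ by $(n_+n_-)^{-1}$ with $(n_+,n_-)=\sigma^{-1}(p(F_\bullet))$, which is exactly your $\gamma(F_0,F_{\ell+m})^{-1}$. You simply spell out the routine checks ($G$-stability of $Q$, landing in $Z$ and $p^{-1}(V)$) that the paper leaves as ``straightforward.''
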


\begin{proof}
We describe maps $p^{-1}(V) \to Z \times V$ and $Z \times V \to p^{-1}(V)$; checking that they are inverse is straightforward. 
Given a point $F_{\bullet} = (F_0, F_1, \ldots, F_{\ell+m}) \in p^{-1}(V)$, let $(n_{+}, n_{-}) = \sigma^{-1}(p(F_{\bullet})))$; we map $F_{\bullet}$ to the point $(((n_+ n_-)^{-1} F_0, (n_+ n_-)^{-1} F_1, \ldots, (n_+ n_-)^{-1} F_{\ell+m}),p(F_{\bullet}))$ in $Z \times V$.
Conversely, given $((F_0, F_1, \ldots, F_{\ell+m}) , v) \in Z \times V$, let $(n_{+}, n_{-}) = \sigma^{-1}(v)$. We map  $((F_0, F_1, \ldots, F_{\ell+m}) , v) $ to $(n_+ n_- F_0, \ldots, n_+ n_- F_{\ell+m})$. \end{proof}



\begin{proof}[Proof of Theorem~\ref{thm:ZSmooth}]
First, suppose that $Z$ is singular. Then Lemma~\ref{lem:Prod} shows that $p^{-1}(V)$ is singular. But $p^{-1}(V)$ is open in $Q$ and $Q$, being a repeated $\mathbb{P}^1$ bundle over $G/B$, is smooth.  So we have a contradiction and $Z$ is smooth.

Next, we establish that $\Delta$ is anticanonical.  Let $A_{+} \subset (G/B)^{\ell+m+1}$ be the subvariety of sequences $(F_0, F_1, \ldots, F_{\ell}, \ldots, F_{\ell+m})$ so that, for $1 \leq i \leq \ell$, the pair $(F_{i-1}, F_i)$ are either equal or $r_i$ related and such that $F_0 = eB$; no condition is imposed on the $F_i$ for $i>\ell$.
Similarly, let $A_{-}  \subset (G/B)^{\ell+m+1}$ be the subvariety where $(F_{i-1}, F_i)$ are equal or $r_i$-related for $\ell+1 \leq i \leq \ell+m$.
Then $A_{+} \simeq Z^w \times (G/B)^{m}$ and $A_{-} \simeq (G/B)^{\ell} \times Z_u$ are products of Bott-Samelson varieties with many copies of $G/B$, so they are smooth; by definition, $Z = A_{+} \cap A_{-}$.
It is easy to compute the dimensions of $A_{\pm}$ and see that $Z$ is the transverse intersection of $A_{+}$ and $A_{-}$.

By Brion \cite[Lemma 1]{Bripos}, we have $\omega_{Z} \cong \omega_{A_+}|_Z \otimes  \omega_{A_-}|_Z \otimes \omega_{(G/B)^{m+\ell+1}}|_Z^{-1}$.
For any weight $\lambda$, let $L(\lambda)$ be the line bundle on $G/B$ which can be made $G$-equivariant such that the fiber over $eB$ transforms by $\lambda$.
For any index $i$ from $0$ to $\ell+m$,  let $L(\lambda, i)$ be the line bundle on $(G/B)^{m+\ell+1}$ which is pulled back from $L(\lambda)$ on $G/B$.
So the canonical bundle on $(G/B)^{\ell+m+1}$ is $\bigotimes_{i=0}^{\ell+m} L(- 2 \rho, i)$, where $2\rho$ is, as usual, the sum of the positive roots.
By \cite[Proposition 2.2.7]{Bri}, $\omega_{A_+} = \mathcal{O}(-\sum_{i=1}^{\ell} \Delta_i) \otimes L(-\rho, \ell) \otimes \bigotimes_{i=\ell+1}^{\ell+m} L(-2 \rho, i)$ and, similarly,  $\omega_{A_+} = \bigotimes_{i=0}^{\ell-1} L(-2 \rho, i)  \otimes L(-\rho, \ell) \otimes  \mathcal{O}(-\sum_{i=\ell+1}^{\ell+m} \Delta_i)$.
So $\omega_Z = \mathcal{O}(-\sum_{i=1}^{\ell+m} \Delta_i)$, as desired.

Finally, we must show that the anti-canonical divisor $\Delta$ induces a splitting.  Pick indices $i_{k_1},i_{k_2},\ldots,i_{k_r}$ and $j_{k'_1},j_{k'_2},\ldots,j_{k'_t}$ so that omitting these indices from $s_{i_1} \cdots s_{i_{\ell}} s_{j_m} \cdots s_{j_1}$ gives a reduced word for $w_0$.  Then the intersection $\Delta_{i_{k_1}} \cap \Delta_{i_{k_2}} \cap \cdots \Delta_{i_{k_r}} \cap \Delta_{j_{k'_1}} \cap \cdots \Delta_{j_{k'_t}}$ is a single point $(F_0, F_1, \ldots, F_{\ell+m})$ where each flag $F_i = w_i B$ (for some $w_i \in W$) is torus-invariant.  Furthermore, it is clear that this is intersection is transverse, and does not intersect any other $\Delta_j$: there is no way to change $B$ to $w_0B$ in less than $\ell(w_0)$ steps.  By \cite[Proposition 1.3.11]{BK}, the divisor $(p-1)\Delta$ induces a splitting of $Z$.
\end{proof}

\begin{lemma}\label{AmpleDi}
There are nonnegative integers $a_i$ such that $\mathcal{O}(\sum a_i \Delta_i)$ is ample on $Z$.
\end{lemma}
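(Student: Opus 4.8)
The strategy is to realise $Z$ inside a product of two honest Bott--Samelson varieties, where ample divisors supported on the boundary are classical, and then restrict. Recall from the construction above that $Z = Z_u^w = Z_u \times_{G/B} Z^w$, where $Z^w$ and $Z_u$ are the Bott--Samelson varieties for the reduced words $s_{i_1}\cdots s_{i_\ell}$ and $s_{j_1}\cdots s_{j_m}$, with both maps to $G/B$ recording the flag $F_\ell$. Since $G/B$ is separated, this fibre product is a closed subvariety of $Z_u \times Z^w$. I would first check that, under the two projections $Z \to Z^w$ and $Z \to Z_u$, the boundary divisors of the two Bott--Samelson varieties pull back exactly to the divisors $\Delta_i$. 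For $Z^w$ this is immediate: its $i$-th boundary divisor (where the $i$-th letter becomes the identity) is $\{F_{i-1} = F_i\}$, whose preimage in $Z$ is $\Delta_i$ for $1 \le i \le \ell$. For $Z_u$ one has to track the reversal built into $r_{\ell+k} = s_{j_{m+1-k}}$: the $k$-th boundary divisor of $Z_u$ pulls back to $\Delta_{\ell+m+1-k}$, so that all of $\Delta_{\ell+1},\dots,\Delta_{\ell+m}$ arise, exactly once each.

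The one substantive ingredient is the standard fact that on any Bott--Samelson variety $Z_{\underline v}$ for a word of length $n$, with boundary divisors $Z_1,\dots,Z_n$, there is a combination $\sum_k e_k Z_k$ with all $e_k > 0$ which is ample. I would prove this by induction on $n$, using the tower of $\PP^1$-bundles $\rho\colon Z_{\underline v}\to Z_{\underline v'}$ obtained by forgetting the last factor, where $\underline v'$ is the truncation of $\underline v$. The top boundary divisor $Z_n$ --- where the last step $F_{n-1}=F_n$ is trivial --- is a section of $\rho$, hence meets each fibre in degree $1$ and is $\rho$-ample; the divisors $Z_1,\dots,Z_{n-1}$ are the $\rho$-pullbacks of the boundary divisors of $Z_{\underline v'}$, which by induction admit an ample combination $\sum_{k<n} e'_k Z'_k$. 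Then for $N\gg 0$ the divisor $N\sum_{k<n} e'_k Z_k + Z_n$ --- a $\rho$-ample divisor plus a large multiple of the pullback of an ample divisor --- is ample on $Z_{\underline v}$, completing the induction; the base cases $n=0,1$ are trivial.

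To finish, choose such ample divisors $\sum_{i=1}^\ell b_i Z^w_i$ on $Z^w$ and $\sum_{k=1}^m c_k (Z_u)_k$ on $Z_u$ with all $b_i, c_k > 0$. Their exterior tensor product is ample on $Z_u\times Z^w$, and its restriction to the closed subvariety $Z$ is ample. By the identifications of the first paragraph, this restriction is $\mathcal{O}(\sum_{i=1}^{\ell+m} a_i \Delta_i)$ with $a_i = b_i$ for $i\le \ell$ and $a_i = c_{\ell+m+1-i}$ for $i > \ell$; in particular all $a_i > 0$, so certainly nonnegative, as required. The only place that needs care is the bookkeeping matching the boundary divisors of the two Bott--Samelson factors with the $\Delta_i$, including the reversal in the $Z_u$ word; the Bott--Samelson ampleness statement, together with the standard facts that an exterior tensor product of ample line bundles is ample and that ampleness is inherited by closed subvarieties, is routine.
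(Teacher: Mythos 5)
Your argument is correct, but it routes around the key input differently than the paper does. The paper embeds $Z$ in $(G/B)^{\ell+m+1}$, restricts the box product of a very ample $L$ on each $G/B$ factor, and then shows each pullback $\pi_i^*L$ is a nonnegative combination of the $\Delta_j$ by factoring $\pi_i$ through one of the two Bott--Samelson varieties and citing the fact (\cite[Exercise 3.1.E.3(e)]{BK}) that every effective line bundle on a Bott--Samelson variety is a nonnegative combination of boundary divisors. You instead work with the closed embedding $Z = Z_u\times_{G/B} Z^w \subset Z_u\times Z^w$ and prove from scratch, by induction up the tower of $\PP^1$-bundles, that each Bott--Samelson factor carries an ample divisor supported (with strictly positive coefficients) on its boundary: the top boundary divisor is a section, hence has fiber degree $1$ and is relatively ample, and a relatively ample divisor plus a large multiple of the pullback of an ample divisor is ample. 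That induction, together with ampleness of exterior products and of restrictions to closed subvarieties, is sound, and your bookkeeping of how the boundary divisors of $Z^w$ and $Z_u$ (with the reversal $r_{\ell+k}=s_{j_{m+1-k}}$) match the $\Delta_i$ is correct. What your route buys is self-containedness (no appeal to the effectivity-implies-boundary-combination fact) and explicit strict positivity of the coefficients; the paper's route is shorter given the reference. One small caveat: your phrase ``pull back exactly to the divisors $\Delta_i$'' asserts multiplicity one, which needs a word of justification (e.g.\ the scheme-theoretic preimage is again a fiber product of the same type, hence smooth and reduced); but, as in the paper's own proof, which only claims the pullback is $\O_Z(a\Delta_j)$ for \emph{some} positive $a$, only positivity of the multiplicity matters for the statement, so this does not affect the conclusion.
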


\begin{proof}
Let $L$ be a very ample line bundle on $G/B$, and for $0 \leq i \leq \ell+m$, let $\pi_i: Z \to G/B$ denote the projection.  Then clearly $\bigotimes_{i=0}^{\ell+m} \pi_i^*(L)$ is an ample line bundle on $Z$.  It is enough to show that each $\pi_i^*(L)$ is isomorphic to $\mathcal{O}(\sum a_j \Delta_j)$ for some nonnegative integers $a_j$.  For each $i$, the map $\pi_i: Z \to G/B$ factors into the composition $\pi = \psi \circ \phi$, of $\phi: Z \to Z'$ and $\psi: Z' \to G/B$ where $Z' =Z_x$ or $Z^y$ is a Bott-Samelson variety.  The pullback $\psi^*L$ is an effective line bundle on $Z'$, so by \cite[Exercise 3.1.E.3(e)]{BK} is isomorphic to $\mathcal{O}_{Z'}(\sum a'_k \Delta'_k)$ for nonnegative integers $a'_k$, where $\Delta'_k$ are the boundary divisors of $Z'$, defined in a similar manner to the $\Delta_j$.  But the inverse image $\phi^{-1}(\Delta'_k)$ in $Z$ is some $\Delta_j \subset Z$ so $\phi^*(\mathcal{O}_{Z'}(\Delta'_k)) \simeq \mathcal{O}_Z(a \Delta_j)$ for some positive $a$ and some $j$.  It follows that $\pi_i^*(L)$ is isomorphic to $\mathcal{O}(\sum a_j \Delta_j)$ for some nonnegative integers $a_j$.
\end{proof}

Our proof is an adaptation of the proof of \cite[Theorem 3.1.4]{BK}. 

\begin{proof}[Proof of Theorem~\ref{thm:RatlRes}]
From Lemma~\ref{lem:ImageQ}, the map $q$ is an isomorphism on $Z \setminus \Delta$, so $\Delta$ contains the exceptional locus of $q$.
So~\cite[Theorem~1.3.14]{BK} applies, and we deduce that $R^j q_* \omega_Z = 0$ for $j>0$.

Next, $X_u^w$ is normal (the argument in \cite{Bripos} holds in all characteristics) and the map $q: Z \to X_u^w$ is birational and proper, so $q_* \mathcal{O}_Z = \mathcal{O}_{X_u^w}$. 

Now, let $L$ be a very ample line bundle on $X_u^w$. 
Let $a_i$ be as in Lemma~\ref{AmpleDi}.
Choose $\nu$ large enough that $p^\nu > a_i$ for all $i$.
Let $j>0$.
As $L$ is very ample, $q^* L$ is globally generated. By construction, $ \mathcal{O}(\sum a_j D_j)$ is ample, so $(q^* L)^{p^{\nu}} \otimes \mathcal{O}(\sum a_j D_j)$ is ample.
Since $Z$ is split, this implies that $H^i(Z, (q^* L)^{p^{\nu}} \otimes \mathcal{O}(\sum a_j D_j))=0$ (\cite[Theorem~1.2.8]{BK}).
But by~\cite[Lemma~1.4.11]{BK}, $H^i(Z, q^* L) $ injects into $H^i(Z, (q^* L)^{p^{\nu}} \otimes \mathcal{O}(\sum a_j D_j))$. So $H^j(Z, q^* L)=0$. 
By the same argument,  $H^j(Z, q^* L^n)=0$ for all positive $n$.
Then, by \cite[Lemma 3.3.3]{BK}, we deduce that $R^j q^* \mathcal{O}_Z=0$.

We have checked all the conditions except that $q_* \omega_Z = \omega_{X_u^w}$. By~\cite[Lemma 3.4.2]{BK}, this follows from the others.
\end{proof}

\end{document}